\documentclass[review,3p,times]{elsarticle}

\usepackage{lineno,hyperref}
\usepackage{hyperref}
\usepackage{amsfonts}

\usepackage{array,booktabs}
\usepackage{tabularx} 
\usepackage{ragged2e} 

\usepackage{amsmath,amsthm,amsfonts,amssymb,latexsym,epsfig}
\usepackage{hyperref}
\usepackage{color}
\usepackage{graphicx}
\usepackage{epstopdf}  
\usepackage{multirow}
\usepackage{float}
\usepackage{subfigure}
\usepackage{cases}

\usepackage{algorithm}
\usepackage{algorithmic}

\newtheorem{theorem}{Theorem}[section]

\newtheorem{definition}[theorem]{Definition}
\newtheorem{remark}[theorem]{Remark}
\newtheorem{lemma}[theorem]{Lemma}
\newtheorem{corollary}[theorem]{Corollary}
\newtheorem{example}[theorem]{Example}

\biboptions{sort&compress}

\numberwithin{equation}{section}

\modulolinenumbers[5]

\journal{}

\begin{document}

\begin{frontmatter}

\title{Spectral decomposition of $(\star,\epsilon_1,\epsilon_2)$-structured matrix polynomials \tnoteref{mytitlenote}}
\tnotetext[mytitlenote]{Supported by the Research Foundation of Education Department of Hunan Province (Grant No. 23A0266), Hunan Provincial Natural Science Foundation (Grant No. 2025JJ50034) and Natural Science Foundation of Changsha city (Grant No. kq2502074).}


\author[mymainaddress]{Kang Zhao\corref{mycorrespondingauthor}}
\cortext[mycorrespondingauthor]{Corresponding author}
\ead{zkmath@csust.edu.cn }
\address[mymainaddress]{School of Mathematics and Statistics, Changsha University of Science and Technology, Changsha, 410114, China}
\author[mysaddress]{Shifang Yuan}
\author[mymainaddress]{Zhongyun Liu}

\address[mysaddress]{School of Mathematics and Computational Science, Wuyi University, Jiangmen, 529020, China}

\begin{abstract}
We provide the spectral decompositions of $(\star,\epsilon_1,\epsilon_2)$-structured matrix polynomials $P(\lambda)$ in the unified form by a standard pair $(X, J)$ and a parameter matrix $\Gamma$. 
Using the recursive relationship between the coefficient matrices of $P(\lambda)$, equivalent expressions  of these coefficient matrices are provided. When $J$ is assumed to be a block diagonal matrix, we show that the parameter matrix $\Gamma$ has a special structure.\end{abstract}

\begin{keyword}
Structured matrix polynomial \sep spectral decomposition \sep standard pair \sep Hankel matrix
\MSC[2010] 15A18 \sep 15A24 
\end{keyword}

\end{frontmatter}

\section{Introduction}

Let $\mathbb{K}=\{\mathbb{R}, \mathbb{C}\}$, where $\mathbb{R}$ and $\mathbb{C}$ denote the fields of real and complex numbers, respectively. In this paper, we consider the spectral decomposition of the following $(\star, \epsilon_1, \epsilon_2)$-structured matrix polynomial of $\mathbb{K}^{n\times n}[\lambda]$
\begin{equation}\label{gs-1}
P(\lambda):=\sum_{j=0}^k\lambda^jA_j,
\end{equation} 
with $k\geq 2$,  $A_j\in\mathbb{K}^{n\times n}$, where $\star\in\{H,\ T\}$, $A_j^{\star}=\epsilon_1A_j$ if $j$ is even and $A_j^{\star}=\epsilon_2A_j$ if $j$ is odd, $\epsilon_1, \epsilon_2\in\{1, -1\}$, and $A^H$, $A^T$ denote the conjugate transpose and transpose of a matrix $A$, respectively.  $(\lambda, x)\in\mathbb{K}\times\mathbb{K}^n$ with $x\neq 0$ is said to be an eigenpair of $P(\lambda)$ if 
$$P(\lambda)x=0.$$
For different choices of $\star$, $\epsilon_1$ and $\epsilon_2$, we can get eight classes of linear structured matrix polynomials, namely, $T$-symmetric, $T$-skew-symmetric, $T$-odd, $T$-even, $H$-Hermitian, $H$-skew-Hermitian, $H$-even and $H$-odd \cite{Adhikari-siam-2009}, and the eigenvalue structures of these polynomials are listed in Table \ref{tab1}. These structures, are prototypes of structured matrix polynomials which occur in many applications (see \cite{Mackey-siam-2006-2,Mackey-siam-2006-1,mehrmann-TNA-2002} and the references therein). In recent years, high-order linear control systems attract the attention of researchers, because they are widely used in the fields of mechanical, civil engineering, robotics control theory, and so on. For example, the vibration systems are often described as second-order systems \cite{Lancaster-book-1966,Tisseur-siam-2001}, the three-axis dynamic flight motion simulator systems and the half-car model with advanced suspension systems are often described as third-order systems \cite{Duan-2005,Le-IEEE-2015}, the modelings of the beam and the single-link flexible-joint manipulator are often described as fourth-order systems \cite{NR-1992,JE-book-1991}. 
\begin{table}[t]
\footnotesize
\caption{Eigenvalues of structured matrix polynomials}\label{tab1}
\begin{center}
  \begin{tabular}{|l|l|l|} \hline
   Name & $(\star,\epsilon_1,\epsilon_2)$ & eigenvalues \\ 
   \hline
    $T$-symmetric & $(T, 1, 1)$ & $\lambda$ if $\mathbb{K}=\mathbb{C}$ and $(\lambda, \overline{\lambda})$ if $\mathbb{K}=\mathbb{R}$ \\
    $T$-skew-symmetric & $(T, -1, -1)$ & $\lambda$ if $\mathbb{K}=\mathbb{C}$ and $(\lambda, \overline{\lambda})$ if $\mathbb{K}=\mathbb{R}$ \\ 
    $T$-even & $(T, 1, -1)$ & $(\lambda, -\lambda)$ if $\mathbb{K}=\mathbb{C}$ and $(\lambda, \overline{\lambda}, -\lambda, -\overline{\lambda})$ if $\mathbb{K}=\mathbb{R}$ \\ 
    $T$-odd & $(T, -1, 1)$ & $(\lambda, -\lambda)$ if $\mathbb{K}=\mathbb{C}$ and $(\lambda, \overline{\lambda}, -\lambda, -\overline{\lambda})$ if $\mathbb{K}=\mathbb{R}$ \\
    $H$-Hermitian & $(H, 1, 1)$ & $(\lambda, \overline{\lambda})$\\
    $H$-skew-Hermitian & $(H, -1, -1)$ & $(\lambda, \overline{\lambda})$\\
    $H$-even & $(H, 1, -1)$ & $(\lambda, -\overline{\lambda})$\\
    $H$-odd & $(H, -1, 1)$ & $(\lambda, -\overline{\lambda})$\\
     \hline
  \end{tabular}
\end{center}
\end{table}

For general matrix polynomials of arbitrary degree, Gohberg, Lancaster and Rodman \cite{Lancaster-book} provided a powerful GLR theory, which can characterize the general matrix polynomials by a standard pair $(X, T)$.  However,  they did not consider the structures of the coefficient matrices, such as symmetry, skew-symmetry, Hermitian, skew-Hermitian and so on, and they also can not give the parametric formula of all the coefficient matrices. For the last decade, the real-valued spectral decompositions of second-order structured-matrix polynomials have been intensively investigated. With the coefficient matrix $M$ being nonsingular, Chu and Xu \cite{Chu-MC-2009} derived a real-valued spectral decomposition of the symmetric system $Q(\lambda)=\lambda^2M+\lambda C+K$, where $M, C, K\in\mathbb{R}^{n\times n}$ are all symmetric. They provided that a matrix pair $(X,\ \mathfrak{J})$ is a real standard pair of $Q(\lambda)$ if and only if there exists a real nonsingular symmetric matrix $S$ such that $\mathfrak{J}S=S\mathfrak{J}^T$ and $XSX^T=0$, and $(M, C, K)$ can be characterized by $(X,\ \mathfrak{J})$ and $S$. Moreover, they also derived the structure of $S$, which is a symmetric block diagonal matrix with its blocks or subblocks being partitioned into upper triangular Hankel blocks. When all eigenvalues of $Q(\lambda)$ are semi-simple, they point out that there is a real standard pair such that $S$ has a simpler structure.  Inspired by \cite{Chu-MC-2009}, Jia and Wei \cite{Jia-siam-2011} derived a real-valued spectral decomposition of the damped gyroscopic system $G(\lambda)=\lambda^2M+\lambda D+K$ in terms of a real standard pair $(X,\, T)$ and the parameter matrix $S$, where $M, K\in\mathbb{R}^{n\times n}$ are symmetric and $D\in\mathbb{R}^{n\times n}$ is skew-symmetric. The matrix $S$ is block diagonal and skew-symmetric, which satisfies $TS=-ST^T$ and $XSX^T=0$, and its blocks are of upper triangular skew-Hankel forms. With the matrices $M$ and $K$ being nonsingular, Zhao \cite{Zhao-jsv-2024} gave a new proof of the real-valued spectral decomposition of the damped symmetric system $Q(\lambda)$. They provided a sufficient and necessary condition that the quadratic system can preserve no spill-over, and characterized a new set of solutions for the partial eigenvalue assignment problem. Based on the real-valued spectral decomposition of a non-symmetric structured-matrix polynomial, Qian \cite{Qian-siam-2017} and Zhao \cite{Zhao-siam-2023} considered the eigenvalue embedding problems of the undamped and damped vibroacoustic systems, respectively, which can preserve the block structures of the coefficient matrices, the symmetry properties of the system matrices and the no spill-over property, simultaneously.  Recently, Zhao \cite{Zhao-mssp-2025} provided the real-valued spectral decomposition of the $T$-symmetric matrix polynomial with arbitrary degree.  Therefore, it is very interesting and challenging to derive the spectral decomposition of all the $(\star, \epsilon_1, \epsilon_2)$-structured matrix polynomials as in (\ref{gs-1}). 

Motivated by \cite{Chu-MC-2009,Jia-siam-2011,Zhao-mssp-2025}, in this paper we derive a spectral decomposition of the $(\star,\epsilon_1,\epsilon_2)$-structured matrix polynomials in terms of a standard pair $(X,\ J)$ and a parameter matrix $\Gamma$, which satisfy $\Gamma^{\star}=\epsilon_2\Gamma$, $\Gamma J^{\star}=\epsilon_1\epsilon_2J\Gamma$, and $XJ^r\Gamma X^{\star}=0$ for $r=0,1,\ldots,k-2$. For some special choices of $\star$, $\epsilon_1$ and $\epsilon_2$, the spectral decompositions given by \cite{Chu-MC-2009,Jia-siam-2011,Zhao-mssp-2025} can be seen as special cases of our results. Moreover, the structure of $\Gamma$ here has a more complicated formula than those for the damped symmetric system in \cite{Chu-MC-2009} and undamped gyroscopic system in \cite{Jia-siam-2011}. And the formula of $\Gamma$ cannot be obtained using the method of \cite{Chu-MC-2009,Jia-siam-2011}, since the blocks of $\Gamma$ are symmetric skew-Hamiltonian and skew-symmetric skew-Hamiltonian for some $(\star,\epsilon_1,\epsilon_2)$-structured matrix polynomials. The main contributions of this paper are: 

(i) The spectral decompositions of all the $(\star, \epsilon_1, \epsilon_2)$-structured matrix polynomials are provided in the unified form. And all the $(\star, \epsilon_1, \epsilon_2)$-structured coefficient matrices  are characterized by a standard pair $(J,\ X)$ and a parameter matrix $\Gamma$. 

(ii) According to the eigenstructure of the $(\star, \epsilon_1, \epsilon_2)$-structured matrix polynomial $P(\lambda)$ listed in Table \ref{tab1}, the structures of the parameter matrix $\Gamma$ are characterized. When all eigenvalues of $P(\lambda)$ are semi-simple, $\Gamma$ can be specified in the simplest structure.

Throughout this paper, the following notations will be used. Denoted by $\mathbb{C}^{m\times n}$ and $\mathbb{R}^{m\times n}$ the set of all complex and real $m\times n$ matrices, respectively, $\mathbb{N}^+$ the set of all positive integers, ${\rm i}\mathbb{R}$ the set of all pure imaginary numbers and $I_m$ the identity matrix.

\section{The spectral decomposition}
In this section, we will provide the spectral decompositions of regular and singular $(\star,\epsilon)$-structured matrix polynomials, respectively.  The notion of \emph{standard pair} introduced in \cite{Lancaster-book} will be a fundamental tool for our development. The canonical form of a standard pair can be considered as a generalization of the Jordan canonical form to matrix polynomials.

\begin{definition}\label{lem-1}\cite[Chapter 7]{Lancaster-book}
A pair of matrices $(X,\ J)\in\mathbb{K}^{n\times kn}\times\mathbb{R}^{kn\times kn}$ is called a standard pair for the matrix polynomial $P(\lambda)$ as in (\ref{gs-1}) if and only if the matrix 
\begin{equation}\label{xgs-1}X_L=\begin{bmatrix}
X\\
XJ\\
\vdots\\
XJ^{k-1}\\
\end{bmatrix}\in\mathbb{K}^{kn\times kn}.
\end{equation}
is nonsingular and the equation
\begin{equation}\label{gs-2}
A_kXJ^k+A_{k-1}XJ^{k-1}+\cdots+A_1XJ+A_0X=0
\end{equation}
holds.
\end{definition}

\subsection{Regular matrix polynomial}

Let $\epsilon:=\epsilon_1\epsilon_2$. For any matrix $\Phi\in\mathbb{K}^{m\times m}$, let
\begin{equation}\label{gs-5}
\mathcal{S}_{(\Phi,\star,\epsilon_1,\epsilon_2)}=\left\{S\in\mathbb{K}^{m\times m}|S^{\star}=\epsilon_2S,\ S\Phi^{\star}=\epsilon\Phi S\right\}.
\end{equation}

It is well known that $P(\lambda)$ is regular if the coefficient matrix $A_k$ is nonsingular, and then $P(\lambda)$ has $kn$ finite eigenvalues.
In this subsection, we always assume that the coefficient matrix $A_k$ of $P(\lambda)$  is nonsingular. 
As is known, linearization is a rather convenient way to cast a matrix polynomial as a linear problem \cite{Mackey-siam-2006-1}. It is easy to verify that the equation (\ref{gs-2}) can be equivalently rewritten as 
\begin{equation}\label{gs-3}
\mathcal{A}X_LJ-\epsilon\mathcal{B}X_L=0,
\end{equation}
where $\mathcal{A}, \mathcal{B}\in\mathbb{K}^{kn\times kn}$, if $k$ is even,
$$\scriptsize{
\mathcal{A}=\begin{bmatrix}
A_1 & A_2 & A_3  & \cdots & A_{k-1} & A_k\\
\epsilon A_2 & \epsilon A_3 & \epsilon A_4  & \cdots & \epsilon A_k & 0\\
A_3 & A_4 & A_5  & \cdots & 0 & 0\\
 \vdots & \vdots & \vdots & \ddots & \vdots & \vdots\\
A_{k-1} & A_k & 0  & \cdots & 0 & 0\\
\epsilon A_k & 0 & 0  & \cdots & 0 & 0\\
\end{bmatrix}, \mathcal{B}=\begin{bmatrix}
-\epsilon A_0 & 0 & 0 & \cdots & 0 & 0\\
0 & A_2 & A_3  & \cdots & A_{k-1} & A_k\\
0 & \epsilon A_3 & \epsilon A_4  & \cdots & \epsilon A_k & 0\\
0 & A_4 & A_5  & \cdots & 0 & 0\\
 \vdots & \vdots & \vdots & \ddots & \vdots & \vdots\\
0 & A_k & 0  & \cdots & 0 & 0\\
\end{bmatrix},}
$$
and if $k$ is odd,
$$\scriptsize{
\mathcal{A}=\begin{bmatrix}
A_1 & A_2 & A_3  & \cdots & A_{k-1} & A_k\\
\epsilon A_2 & \epsilon A_3 & \epsilon A_4  & \cdots & \epsilon A_k & 0\\
A_3 & A_4 & A_5  & \cdots & 0 & 0\\
 \vdots & \vdots & \vdots & \ddots & \vdots & \vdots\\
\epsilon A_{k-1} & \epsilon A_k & 0  & \cdots & 0 & 0\\
 A_k & 0 & 0  & \cdots & 0 & 0\\
\end{bmatrix}, \mathcal{B}=\begin{bmatrix}
-\epsilon A_0 & 0 & 0 & \cdots & 0 & 0\\
0 & A_2 & A_3  & \cdots & A_{k-1} & A_k\\
0 & \epsilon A_3 & \epsilon A_4  & \cdots & \epsilon A_k & 0\\
0 & A_4 & A_5  & \cdots & 0 & 0\\
 \vdots & \vdots & \vdots & \ddots & \vdots & \vdots\\
0 & \epsilon A_k & 0  & \cdots & 0 & 0\\
\end{bmatrix}.}
$$
It follows from the definition of $P(\lambda)$ in (\ref{gs-1}) that $\mathcal{A}^{\star}=\epsilon_2\mathcal{A}$ and $\mathcal{B}^{\star}=\epsilon_1\mathcal{B}$. 
Clearly, the matrix $\mathcal{A}$ defined in  (\ref{gs-3}) is nonsingular, since $A_k$ is nonsingular. 
Let
\begin{equation}\label{gs-4}
\Gamma:=\left(X_L^{\star}\mathcal{A}X_L\right)^{-1}\in\mathbb{K}^{kn\times kn}.
\end{equation}
Define 
\begin{equation}\label{g-t}
\tau(j)=\left\{\begin{array}{ll}
1, &\text{if}\ \ j\ \ \text{is}\ \ \text{odd},\\
\epsilon, &\text{if}\ \ j\ \ \text{is}\ \ \text{even}.\\
\end{array}\right.
\end{equation}
\begin{theorem}\label{thm-1}
Let $(X,\ J)$ be a pair of matrices, where $X\in\mathbb{R}^{n\times kn}$ and $J\in\mathbb{R}^{kn\times kn}$. Then $(X,\ J)$  is a standard pair of $P(\lambda)$ as  in (\ref{gs-1}) if and only if the matrix $X_L$ defined in (\ref{xgs-1}) and $XJ^{k-1}\Gamma X^{\star}$ are nonsingular, and there exists an $kn\times kn$ nonsingular matrix $\Gamma\in\mathcal{S}_{(J,\star,\epsilon_1,\epsilon_2)}$ satisfying
\begin{equation}\label{gs-6}
XJ^r\Gamma X^{\star}=0, \ r=0, 1, \ldots, k-2.
\end{equation}
And in this case, the coefficient matrices $A_0, A_1, \ldots, A_k$ of $P(\lambda)$ can be expressed as 
\begin{equation}\label{gs-7}
\left\{\begin{array}{l}
A_k=(XJ^{k-1}\Gamma X^{\star})^{-1},\\
A_j=-\sum_{i=j+1}^kA_iXJ^{k-j+i-1}\Gamma X^{\star}A_k,\ j=0, 1, \ldots, k-1.
\end{array}\right.
\end{equation}
\end{theorem}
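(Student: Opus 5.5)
Necessity goes through the linearization (\ref{gs-3}) and the matrix $\Gamma$ of (\ref{gs-4}). Sufficiency goes the other way: from $\Gamma$ we reconstruct the coefficients by (\ref{gs-7}) and check (\ref{gs-2}).

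\emph{Necessity.} Let $(X,J)$ be a standard pair. Then $X_L$ is nonsingular by Definition \ref{lem-1}, and $\mathcal{A}$ is nonsingular because $A_k$ is. Hence $\Gamma=(X_L^{\star}\mathcal{A}X_L)^{-1}$ is well defined and nonsingular. Since $\mathcal{A}^{\star}=\epsilon_2\mathcal{A}$, we get $\Gamma^{-\star}=\epsilon_2\Gamma^{-1}$, so $\Gamma^{\star}=\epsilon_2\Gamma$.

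Multiply (\ref{gs-3}) on the left by $X_L^{\star}$ to obtain
\[
(X_L^{\star}\mathcal{A}X_L)J=\epsilon X_L^{\star}\mathcal{B}X_L .
\]
The right-hand side $M:=X_L^{\star}\mathcal{B}X_L$ satisfies $M^{\star}=\epsilon_1 M$. Taking $\star$ of both sides gives
\[
J^{\star}\epsilon_2\Gamma^{-1}=\epsilon\epsilon_1 M=\epsilon_2 M,
\qquad\text{so}\qquad J^{\star}\Gamma^{-1}=M.
\]
Combining with $\Gamma^{-1}J=\epsilon M$ yields $J^{\star}\Gamma^{-1}=\epsilon\Gamma^{-1}J$. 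Multiplying by $\Gamma$ on both sides gives $\Gamma J^{\star}=\epsilon J\Gamma$, so $\Gamma\in\mathcal{S}_{(J,\star,\epsilon_1,\epsilon_2)}$.

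To obtain (\ref{gs-6}) and the formula for $A_k$, write $X_L\Gamma X_L^{\star}=\mathcal{A}^{-1}$. The $(i,j)$ block of the left side is $XJ^{i-1}\Gamma (XJ^{j-1})^{\star}$. Using $\Gamma (J^{\star})^{j}=\epsilon^{j}J^{j}\Gamma$, this equals $\epsilon^{j-1}XJ^{i+j-2}\Gamma X^{\star}$, a Hankel-type block matrix. The matrix $\mathcal{A}$ is block anti-triangular with (up to signs $\epsilon$) the blocks $A_k$ on its anti-diagonal, so $\mathcal{A}^{-1}$ is block triangular on the other side. It has zero blocks above (or below) the anti-diagonal and anti-diagonal blocks $\pm A_k^{-1}$. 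Reading off the first block row:
\begin{itemize}
\item $XJ^{r}\Gamma X^{\star}=0$ for $r\le k-2$, which is (\ref{gs-6});
\item $XJ^{k-1}\Gamma X^{\star}=A_k^{-1}$, after checking the sign $\tau$ in the bottom-left block of $\mathcal{A}$ against $\epsilon^{j-1}$. This is where $\tau(\cdot)$ and the parity of $k$ enter.
\end{itemize}
The recursion for $A_j$ comes from $\mathcal{A}\,X_L\Gamma X_L^{\star}=I$. Take the last block column of $X_L\Gamma X_L^{\star}$, whose entries are proportional to $XJ^{i+k-2}\Gamma X^{\star}$. Multiplying by block row $j+1$ of $\mathcal{A}$, namely $(A_{j+1},\dots,A_k,0,\dots)$ up to a sign, gives zero off the diagonal. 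Alternatively, derive the recursion directly: multiply (\ref{gs-2}) on the right by $J^{s}\Gamma X^{\star}$ and use (\ref{gs-6}). Taking $s=k-1-j$ kills the terms with $i+s\le k-2$, i.e.\ $i<j$, and leaves
\[
A_jXJ^{k-1}\Gamma X^{\star}+\sum_{i>j}A_iXJ^{i+k-1-j}\Gamma X^{\star}=0 .
\]
Right-multiplying by $A_k$ gives (\ref{gs-7}). I will use this second derivation, as it avoids the sign bookkeeping.

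\emph{Sufficiency.} Let $X_L$ and $XJ^{k-1}\Gamma X^{\star}$ be nonsingular, and let $\Gamma$ satisfy the stated conditions. Define $A_k,\dots,A_0$ by (\ref{gs-7}). We must show (\ref{gs-2}) and that the $A_j$ have the required $\star$-structure.

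For (\ref{gs-2}), set $R:=\sum_j A_jXJ^j$. By the choice of the $A_j$ in (\ref{gs-7}), $R\,J^{s}\Gamma X^{\star}=0$ for $s=0,\dots,k-1$, using (\ref{gs-6}) for the lower terms. The symmetry $\Gamma (J^{\star})^s=\epsilon^sJ^s\Gamma$ turns this into $R\,\Gamma X_L^{\star}=0$. Since $\Gamma$ and $X_L$ are nonsingular, $R=0$.

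\emph{Structure of the coefficients.} The symmetries $A_j^{\star}=\epsilon_1$ or $\epsilon_2$ times $A_j$ are, I expect, the main obstacle. For $A_k$ it is immediate from
\[
(XJ^{k-1}\Gamma X^{\star})^{\star}=X\Gamma^{\star}(J^{\star})^{k-1}X^{\star}=\epsilon_2\epsilon^{k-1}XJ^{k-1}\Gamma X^{\star},
\]
which gives the correct sign by parity of $k$. For general $j$, prove by downward induction that $XJ^{m}\Gamma X^{\star}$ has $\star$-parity $\epsilon_2\epsilon^{m}$, and then expand the recursion. A cleaner route is to build $\mathcal{A}:=(X_L^{\star})^{-1}\Gamma^{-1}X_L^{-1}$. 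This $\mathcal{A}$ is automatically $\epsilon_2$-$\star$-symmetric. Then show that its block anti-triangular Hankel pattern coincides with the matrix $\mathcal{A}$ built from the $A_j$ in (\ref{gs-7}). This is done by inverting the block anti-triangular Hankel matrix $X_L\Gamma X_L^{\star}$, which mirrors the necessity computation. The structure of each $A_j$ is then read off from the $\star$-symmetry of its blocks. I would try this route first.
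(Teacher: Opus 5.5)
Your necessity argument is correct. Your derivation of $\Gamma J^{\star}=\epsilon J\Gamma$ (multiply (\ref{gs-3}) by $X_L^{\star}$ and use $\mathcal{B}^{\star}=\epsilon_1\mathcal{B}$) is a genuine shortcut. The paper instead proves $\Gamma^{-1}J=\epsilon J^{\star}\Gamma^{-1}$ by a long re-indexing of sums built from (\ref{gs-8}) and (\ref{gs-9}), without using $\mathcal{B}$. You then obtain (\ref{gs-6}), $A_k$ and the recursion as the paper does. Your proof of (\ref{gs-2}) in the sufficiency part is also correct: $RJ^s\Gamma X^{\star}=0$ for $0\le s\le k-1$, hence $R\Gamma X_L^{\star}=0$. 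It improves on the paper's argument, whose final computation of $A_kXJ^kX_L^{-1}$ turns $-\epsilon_1A_0^{\star},\dots$ into $-A_0,\dots$. That step therefore rests on the coefficient symmetries, which the paper checks only for $A_k,A_{k-1},A_{k-2}$.

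The gap is in those symmetries, which the theorem does require: the polynomial defined by (\ref{gs-7}) must be of type (\ref{gs-1}). Your preferred route is only announced, and as stated it cannot cover everything. The matrix $\mathcal{A}$ contains only $A_1,\dots,A_k$, so the symmetry of $A_0$ cannot be ``read off from its blocks.''

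For $A_1,\dots,A_k$ the route does work. Use the identity $\sum_{m=p}^{k}A_mXJ^{m+k-p-1}\Gamma X^{\star}=0$ ($0\le p\le k-1$), which you already used for $R$, together with (\ref{gs-6}). A direct check then gives $\mathcal{A}\,X_L\Gamma X_L^{\star}=I$ for the $\mathcal{A}$ assembled from (\ref{gs-7}). Hence $\mathcal{A}^{\star}=\epsilon_2\mathcal{A}$, and its first block row yields $A_j^{\star}=\epsilon_2\tau(j)A_j$.

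For $A_0$ you need an extra step, for instance the following. Since (\ref{gs-2}), and hence (\ref{gs-3}), is already established, $\mathcal{B}=\epsilon\mathcal{A}X_LJX_L^{-1}=\epsilon(X_L^{\star})^{-1}\Gamma^{-1}JX_L^{-1}$. The relation $J^{\star}\Gamma^{-1}=\epsilon\Gamma^{-1}J$ then gives $\mathcal{B}^{\star}=\epsilon_1\mathcal{B}$. Its $(1,1)$ block is $-\epsilon A_0$, which gives $A_0^{\star}=\epsilon_1A_0$.

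Your other suggestion (take the parity of $XJ^m\Gamma X^{\star}$, then expand the recursion) does not close as stated. Applying $\star$ turns $A_iXJ^m\Gamma X^{\star}A_k$ into a multiple of $A_kXJ^m\Gamma X^{\star}A_i$, which is not a term of the recursion. That reordering is exactly what the paper handles for $A_{k-2}$ and then leaves to ``similarly.''
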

\begin{proof}
We only give the proof of the case that $k$ is odd, and the case that $k$ is even can be proved similarly. 

(Necessity) If $(X,\ J)$ is a standard pair of $P(\lambda)$, we can see from Definition \ref{lem-1} that the matrix $X_L$ defined in (\ref{xgs-1}) is nonsingular and the equation (\ref{gs-2}) holds, which implies that the matrix $\Gamma$ in (\ref{gs-4}) is well-defined.  Multiplying on the left side of (\ref{gs-2}) by $X^{\star}$ leads to 
\begin{equation}\label{gs-8}
X^{\star}A_0X=-\sum_{j=1}^kX^{\star}A_jXJ^j.
\end{equation}
Taking the $\star$-transpose of (\ref{gs-2}) and multiplying on the right side by $X$, we get
$$\epsilon_2(J^k)^{\star}X^{\star}A_kX+\epsilon_1(J^{k-1})^{\star}X^{\star}A_{k-1}X+\cdots+\epsilon_2J^{\star}X^{\star}A_1X+\epsilon_1X^{\star}A_0X=0,$$
which implies that
\begin{equation}\label{gs-9}
X^{\star}A_0X=-\epsilon\sum_{j=1}^k\tau(j)(J^j)^{\star}X^{\star}A_jX.
\end{equation}
By the definition of $\Gamma$ in (\ref{gs-4}), we have
$$\Gamma^{-1}=X_L^{\star}\mathcal{A}X_L=\sum_{s=1}^k\tau(s)\left[\sum_{j=0}^{k-s}\tau(j+s)(J^j)^{\star}X^{\star}A_{j+s}X\right]J^{s-1}.$$
Then, it follows from (\ref{gs-8}) and (\ref{gs-9}) that 
$${\begin{array}{l}
\Gamma^{-1}J \\
 =\sum\limits_{s=1}^k\tau(s)\left[\sum\limits_{j=0}^{k-s}\tau(j+s)(J^j)^{\star}X^{\star}A_{j+s}X\right]J^s\\
 =\sum\limits_{s=1}^kX^{\star}A_sXJ^s+\sum\limits_{s=1}^{k-1}\tau(s)\left[\sum\limits_{j=1}^{k-s}\tau(j+s)(J^j)^{\star}X^{\star}A_{j+s}X\right]J^s\\
 =-X^{\star}A_0X+\sum\limits_{s=1}^{k-1}\tau(s)\left[\sum\limits_{j=1}^{k-s}\tau(j+s)(J^j)^{\star}X^{\star}A_{j+s}X\right]J^s\\
 =\epsilon\sum\limits_{j=1}^k\tau(j)(J^j)^{\star}X^{\star}A_jX+\sum\limits_{s=1}^{k-1}\tau(s)\left[\sum\limits_{j=1}^{k-s}\tau(j+s)(J^j)^{\star}X^{\star}A_{j+s}X\right]J^s\\
 =\epsilon J^{\star}\left(\sum\limits_{j=1}^k\tau(j)(J^{j-1})^{\star}X^{\star}A_jX+\sum\limits_{s=1}^{k-1}\tau(s+1)\left[\sum\limits_{j=1}^{k-s}\tau(j+s)(J^{j-1})^{\star}X^{\star}A_{j+s}X\right]J^s\right)\\
  =\epsilon J^{\star}\left(\sum\limits_{j=1}^k\tau(j)(J^{j-1})^{\star}X^{\star}A_jX+\sum\limits_{s=2}^{k}\tau(s)\left[\sum\limits_{j=1}^{k-s+1}\tau(j+s-1)(J^{j-1})^{\star}X^{\star}A_{j+s-1}X\right]J^{s-1}\right)\\
=\epsilon J^{\star}\sum\limits_{s=1}^{k}\tau(s)\left[\sum\limits_{j=1}^{k-s+1}\tau(j+s-1)(J^{j-1})^{\star}X^{\star}A_{j+s-1}X\right]J^{s-1}\\
=\epsilon J^{\star}\sum\limits_{s=1}^{k}\tau(s)\left[\sum\limits_{j=0}^{k-s}\tau(j+s)(J^{j})^{\star}X^{\star}A_{j+s}X\right]J^{s-1}\\
=\epsilon J^{\star}\Gamma^{-1},\\
\end{array}}$$
which implies that $J\Gamma=\epsilon\Gamma J^{\star}$. Since $\mathcal{A}^{\star}=\epsilon_2\mathcal{A}$, it is easy to see from (\ref{gs-4}) that $\Gamma^{\star}=\epsilon_2\Gamma$, i.e., $\Gamma\in\mathcal{S}_{(J,\star,\epsilon_1,\epsilon_2)}$. Note that $\mathcal{A}=(X_L\Gamma X_L^{\star})^{-1}$, then we have 
\begin{equation}\label{gs-10}
X_L\Gamma X_L^{\star}\mathcal{A}=I_{kn}.
\end{equation}
It follows from the last column block of (\ref{gs-10}) that 
$A_k=(XJ^{k-1}\Gamma X^{\star})^{-1}$ and 
$$X\Gamma X^{\star}=0,\ XJ\Gamma X^{\star}=0,  XJ^2\Gamma X^{\star}=0,\ \ldots,\ XJ^{k-2}\Gamma X^{\star}=0,$$
i.e,. (\ref{gs-6}) holds. Post-multiplying (\ref{gs-2}) by $\Gamma X^{\star}$ and applying (\ref{gs-6}), we can obtain that
$$A_kXJ^k\Gamma X^{\star}+A_{k-1}XJ^{k-1}\Gamma X^{\star}=0.$$
Then, $A_{k-1}=-A_kXJ^k\Gamma X^{\star}A_k.$ Similarly, post-multiplying (\ref{gs-2}) by $J^{k-j-1}\Gamma X^{\star}$, we can get
$$A_j=-\sum_{i=j+1}^kA_iXJ^{k-j+i-1}\Gamma X^{\star}A_k,\ \ j=0, 1, \ldots, k-2,$$
i.e., all the coefficient matrices of $P(\lambda)$ in (\ref{gs-1}) can be expressed as in (\ref{gs-7}).

(Sufficiency) Suppose that there exists a nonsingular matrix $\Gamma\in\mathcal{S}_{(J,\star,\epsilon_1,\epsilon_2)}$ such that (\ref{gs-6}) holds, then we have 
\begin{equation}\label{gs-11}
X_L\Gamma X_L^{\star}\\
=\begin{bmatrix}
0 & 0 & \cdots & 0 & \epsilon^{k-1}XJ^{k-1}\Gamma X^{\star}\\
0 & 0 & \cdots & \epsilon^{k-2}XJ^{k-1}\Gamma X^{\star} & \epsilon^{k-1}XJ^k\Gamma X^{\star}\\
\vdots & \vdots & \ddots & \vdots & \vdots\\
0 & \epsilon XJ^{k-1}\Gamma X^{\star} & \cdots & \epsilon^{k-2}XJ^{2k-4}\Gamma X^{\star} & \epsilon^{k-1}XJ^{2k-3}\Gamma X^{\star}\\
XJ^{k-1}\Gamma X^{\star} & \epsilon XJ^k\Gamma X^{\star} & \cdots & \epsilon^{k-2}XJ^{2k-3}\Gamma X^{\star} & \epsilon^{k-1}XJ^{2k-2}\Gamma X^{\star}\\
\end{bmatrix}
\end{equation}
Note that the matrices $X_L$ and $\Gamma$ are nonsingular, it follows that $X_L\Gamma X_L^{\star}$ is nonsingular, i.e., $XJ^{k-1}\Gamma X^{\star}$ is also nonsingular. Since $J\Gamma =\epsilon\Gamma J^{\star}$ and $\Gamma^{\star}=\epsilon_2\Gamma$, it is easy to verify that 
\begin{equation}\label{gs-12}
J^s\Gamma=\epsilon^s\Gamma(J^s)^{\star}, \ \ s\in\mathbb{Z}^+.
\end{equation}
Since $k$ is odd and $\Gamma^{\star}=\epsilon_2\Gamma$, we can see from (\ref{gs-7}) and (\ref{gs-12}) that 
$$A_k^{\star}=\epsilon_2\left(X\Gamma(J^{k-1})^{\star}X^{\star}\right)^{-1}=\epsilon_2\epsilon^{k-1}(XJ^{k-1}\Gamma X^{\star})^{-1}=\epsilon_2A_k,$$
which implies that the matrix $A_k$ given by (\ref{gs-7}) is well-defined. Since $k$ is odd, we have
$$A_{k-1}^{\star}=-(A_kXJ^k\Gamma X^{\star}A_k)^{\star}=-A_kX\Gamma^{\star}(J^k)^{\star}X^{\star}A_k=\epsilon_2\epsilon^kA_{k-1}=\epsilon_1A_{k-1},$$
i.e., the matrix $A_{k-1}$ given by (\ref{gs-7}) is well-defined. Moreover, we can obtain from (\ref{gs-7}) and (\ref{gs-12}) that
$$\begin{array}{ll}
A_{k-2}^{\star}&=-(A_kXJ^{k+1}\Gamma X^{\star}A_k)^{\star}-(A_{k-1}XJ^k\Gamma X^{\star}A_k)^{\star}\\
&=-\epsilon_2\epsilon^{k+1}A_kXJ^{k+1}\Gamma X^{\star}A_k-\epsilon_1\epsilon^kA_kXJ^k\Gamma X^{\star}A_{k-1}\\
&=-\epsilon_2\left(A_kXJ^{k+1}\Gamma X^{\star}A_k+A_kXJ^k\Gamma X^{\star}A_kXJ^k\Gamma X^{\star}A_k\right)\\
&=-\epsilon_2\left(A_kXJ^{k+1}\Gamma X^{\star}A_k+A_{k-1}XJ^k\Gamma X^{\star}A_k\right)\\
&=\epsilon_2A_{k-2}.
\end{array}
$$
Similarly, we can prove that all the matrices given by (\ref{gs-7}) satisfying 
$A_j^{\star}=\epsilon_1A_j$ when $j$ is even and $A_j^{\star}=\epsilon_2A_j$ when $j$ is odd, i.e., the matrices $A_j$ $(j=0, 1, \ldots, k)$ given by (\ref{gs-7}) are all well-defined. 

Substituting (\ref{gs-7}) into (\ref{gs-11}), we can get $X_L\Gamma X_L^{\star}=\mathcal{A}^{-1}$ by simple matrix multiplication, which implies that $\Gamma$ has the form given by (\ref{gs-4}) and $X_L^{-1}=\Gamma X_L^{\star}\mathcal{A}$. Thus, we can obtain that
\begin{equation}\label{gs-13}
\begin{array}{l}
A_kXJ^kX_L^{-1}\\
=A_kXJ^k\Gamma\left[X^{\star}, J^{\star}X^{\star},\ldots, (J^{k-1})^{\star}X^{\star}\right]\mathcal{A}\\
=A_kXJ^k\left[\Gamma X^{\star}, \epsilon J\Gamma X^{\star},\ldots, \epsilon^{k-1}J^{k-1}\Gamma X^{\star}\right]\mathcal{A}\\
=A_kXJ^k\left[\tau(1)\sum\limits_{j=0}^{k-1}J^j\Gamma X^{\star}\tau(j+1)\epsilon^{j}A_{j+1},\tau(2)\sum\limits_{j=0}^{k-2}J^j\Gamma X^{\star}\tau(j+2)\epsilon^{j}A_{j+2}, \ldots,\right.\\
\left.\ \ \ \ \ \ \ \ \ldots,\tau(k-1)\sum\limits_{j=0}^{1}J^j\Gamma X^{\star}\tau(j+k-1)\epsilon^{j}A_{j+k-1}, \tau(k)\Gamma X^{\star}A_k\right].\\ 
\end{array}
\end{equation} 
For any numbers $s, j$ with $s=1, 2, \ldots, k$ and $j=0,1,\ldots, k-1$, we always have 
\begin{equation}\label{gs-14}
\tau(s)\tau(j+s)\epsilon^j=1.
\end{equation} 
In fact, if $s$ and $j$ are all odd, then we have $\tau(s)=1$, $\tau(j+s)=\epsilon$ and $\epsilon^j=\epsilon$, which implies that $\tau(s)\tau(j+s)\epsilon^j=1$. The other cases can be proved similarly. 
Substituting (\ref{gs-7}) and (\ref{gs-14}) into (\ref{gs-13}) leads to
$$\begin{array}{l}
A_kXJ^kX_L^{-1}\\
=A_kXJ^k\left[\sum\limits_{j=0}^{k-1}J^j\Gamma X^{\star}A_{j+1},\sum\limits_{j=0}^{k-2}J^j\Gamma X^{\star}A_{j+2}, \ldots,\sum\limits_{j=0}^{1}J^j\Gamma X^{\star}A_{j+k-1}, \Gamma X^{\star}A_k\right].\\ 
=\left[-\epsilon_1A_0^{\star}, -\epsilon_2A_1^{\star}, \ldots, -\epsilon_2A_{k-2}^{\star}, -\epsilon_{1}A_{k-1}^{\star}\right]\\
=\left[-A_0, -A_1, \ldots, -A_{k-1}\right],\\
\end{array}
$$
which implies that
$$A_kXJ^k=\left[-A_0, -A_1, \ldots, -A_{k-1}\right]X_L,$$
i.e., 
$$A_kXJ^k+A_{k-1}XJ^{k-1}+\cdots+A_1XJ+A_0X=0.$$
This completes the proof.
\end{proof}
\begin{remark}(Recovery of results in Zhao \cite{Zhao-mssp-2025})
Taking $\mathbb{K}=\mathbb{R}$ and $\star=T$, then the real-valued spectral decomposition of the high-order symmetric matrix polynomial provided in \cite{Zhao-mssp-2025} coincides with the Theorem \ref{thm-1} by setting $\epsilon_1=1$ and $\epsilon_2=1$.
\end{remark}

We can see from (\ref{gs-7}) that each term of the formula $A_j$ ($j=0, 1,\ldots, k-1$) contains the matrix pair $(X,\ J)$, which may bring difficulties in applications, such as inverse eigenvalue problem, eigenvalue embedding problem and so on. Using the recursive relationship between the coefficient matrices $A_j$, we give an equivalent expression of (\ref{gs-7}). And there is only one term of the new expression of $A_j$, which contains the matrix pair $(X,\ J)$.  

Let $t\in\mathbb{Z}^+ (1\leq t<k)$ and $\gamma_1,\ldots,\gamma_{t+1}\in\mathbb{Z}^+$ satisfy $1\leq\gamma_1\leq \cdots\leq \gamma_{t+1}<k$. Let $\mathcal{L}(\gamma_1,\ldots,\gamma_{t+1})$ ( denoted simply by $\mathcal{L}(t+1)$) be the set of all permutations of the numbers $\gamma_1,\ldots,\gamma_{t+1}$. In fact, if all the distinct numbers of $\gamma_1,\ldots,\gamma_{t+1}$ are $\eta_1,\ldots,\eta_r$, and the number of $\eta_j$ is $t_j$, $j=1,\ldots,r$. Clearly, $t_1+t_2+\cdots+t_r=t+1$. It is well known that the number of all the permutations in $\mathcal{L}(t+1)$ is $\frac{(t+1)!}{\Pi_{j=1}^r(t_j!)}$, where $t_j!=1\times 2\times \cdots\times t_j$. For any non-negative number $d\in \mathbb{Z}$, define 
\begin{equation}\label{gs-15}
\mathcal{I}_{(t,\ d)}:=\left\{(\gamma_1,\ldots,\gamma_{t+1})\in\mathcal{L}(t+1)\ |\ \sum_{j=1}^{t+1}\gamma_j=tk+d\right\}.
\end{equation}
\begin{example}
Let $k=7$, $t=3$ and $d=1$, then
$$\begin{array}{ll}
\mathcal{I}_{(3, 1)}=&\{(4,6,6,6),(6,4,6,6),(6,6,4,6),(6,6,6,4), (5,5,6,6)\\
& \ \,(5,6,5,6),(6,5,6,5),(6,6,5,5),(5,6,6,5),(6,5,5,6)\}.\\
\end{array}
$$
\end{example}
\begin{lemma}\label{lem-2}
Let $s, m\in\mathbb{Z}^+$ satisfy  $3\leq s\leq k+1$ and $1\leq m\leq s-2$. Then, $\mathcal{I}_{(m+1, k-s)}=\mathcal{N}$, where
$$\mathcal{N}=\bigcup_{l=1}^{s-2}\bigcup_{(\gamma_1,\ldots,\gamma_{m+1})\in\mathcal{I}_{(m,k-s+l)}}\mathcal{L}(k-l, \gamma_1,\ldots, \gamma_{m+1}).$$ 
\end{lemma}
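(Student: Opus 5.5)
The plan is to prove the set equality $\mathcal{I}_{(m+1,k-s)}=\mathcal{N}$ by double inclusion. Throughout, I read $\mathcal{I}_{(t,d)}$ as the set of all ordered $(t+1)$-tuples with integer entries in $\{1,\ldots,k-1\}$ whose entries sum to $tk+d$. This is what (\ref{gs-15}) describes once $\mathcal{L}(t+1)$ ranges over all admissible multisets $\{\gamma_1,\ldots,\gamma_{t+1}\}$. I read $\mathcal{L}(k-l,\gamma_1,\ldots,\gamma_{m+1})$ as the set of all orderings of the multiset $\{k-l,\gamma_1,\ldots,\gamma_{m+1}\}$. The key bookkeeping identity is
$$(k-l)+\bigl(mk+(k-s+l)\bigr)=(m+1)k+(k-s),$$
so inserting the entry $k-l$ into a tuple of $\mathcal{I}_{(m,k-s+l)}$ produces exactly the sum required in $\mathcal{I}_{(m+1,k-s)}$.

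\emph{Inclusion $\mathcal{N}\subseteq\mathcal{I}_{(m+1,k-s)}$.} Take $l\in\{1,\ldots,s-2\}$, $(\gamma_1,\ldots,\gamma_{m+1})\in\mathcal{I}_{(m,k-s+l)}$, and any ordering $\delta$ of $\{k-l,\gamma_1,\ldots,\gamma_{m+1}\}$.
\begin{itemize}
\item $\delta$ has $m+2$ entries.
\item Its entries sum to $(m+1)k+(k-s)$ by the identity above.
\item Each $\gamma_j$ lies in $\{1,\ldots,k-1\}$ by assumption.
\item The new entry $k-l$ lies in $\{1,\ldots,k-1\}$ because $1\le l\le s-2\le k-1$ (using $s\le k+1$).
\end{itemize}
Hence $\delta\in\mathcal{I}_{(m+1,k-s)}$.

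\emph{Inclusion $\mathcal{I}_{(m+1,k-s)}\subseteq\mathcal{N}$.} This is the only step needing an idea: show that every tuple has an entry that can play the role of $k-l$ with $1\le l\le s-2$. Let $\delta=(\delta_1,\ldots,\delta_{m+2})\in\mathcal{I}_{(m+1,k-s)}$ and choose an index $i$ with $\delta_i$ maximal. Since the maximum is at least the average,
$$\delta_i\ \ge\ \frac{(m+1)k+k-s}{m+2}=k-\frac{s}{m+2}.$$
We need $\delta_i\ge k-s+2$. It suffices to show $k-\frac{s}{m+2}>k-s+1$ and then use integrality. This inequality is equivalent to $\frac{s(m+1)}{m+2}>1$, which holds because $s\ge3$ and $m\ge1$ give $\frac{s(m+1)}{m+2}\ge 3\cdot\frac{2}{3}=2$.

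Set $l:=k-\delta_i$. The upper bound $\delta_i\le k-1$ gives $l\ge1$, and the bound just proved gives $l\le s-2$. Now delete $\delta_i$ from $\delta$. The remaining $m+1$ entries still lie in $\{1,\ldots,k-1\}$ and sum to $mk+(k-s+l)$. The offset $d=k-s+l$ is a non-negative integer since $s\le k+1$ and $l\ge1$, so (\ref{gs-15}) applies. Hence the remaining tuple $(\gamma_1,\ldots,\gamma_{m+1})$ belongs to $\mathcal{I}_{(m,k-s+l)}$. Finally, $\delta$ is an ordering of $\{k-l,\gamma_1,\ldots,\gamma_{m+1}\}$, so $\delta\in\mathcal{L}(k-l,\gamma_1,\ldots,\gamma_{m+1})\subseteq\mathcal{N}$.

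Combining the two inclusions gives the lemma. The main obstacle is the averaging bound for the maximal entry. It is exactly where the hypotheses $s\ge3$ and $m\ge1$ are used, while $s\le k+1$ is used only to keep the offsets $d$ non-negative and the new entry $k-l$ inside $\{1,\ldots,k-1\}$.
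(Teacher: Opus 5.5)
Your proof is correct. The inclusion $\mathcal{N}\subseteq\mathcal{I}_{(m+1,k-s)}$ is the same as in the paper, but your reverse inclusion takes a different route.

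\textbf{How the paper does the reverse inclusion.} The paper fixes one coordinate, $x_{m+2}$, and tries to show that \emph{every} entry of a solution of $x_1+\cdots+x_{m+2}=(m+2)k-s$ is at least $k-s+2$. It asserts that the smallest entry is at least $k-s+1+\frac{m(m+1)}{2}$. The supporting argument tacitly assumes the entries are distinct, and the bound is false when entries repeat. For instance, with $k=7$, $s=6$, $m=2$, the tuple $(4,6,6,6)\in\mathcal{I}_{(3,1)}$ from the paper's own example has minimum $4<5$. The correct bound is $(m+2)k-s-(m+1)(k-1)=k-s+m+1$. Since $m\ge 1$, this is at least $k-s+2$, which is all the paper needs, so its route is repairable.

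\textbf{What your route does.} You bound only the maximal entry, using the average, and then use integrality. This is fully rigorous and uses the least information needed for the set equality as stated.

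\textbf{What the repaired paper route gives in addition.} Every entry, in particular the first, lies in $\{k-s+2,\ldots,k-1\}$. Hence the map $(l,\gamma)\mapsto(k-l,\gamma_1,\ldots,\gamma_{m+1})$ is a bijection from $\bigcup_{l}\{l\}\times\mathcal{I}_{(m,k-s+l)}$ onto $\mathcal{I}_{(m+1,k-s)}$. This position-fixed decomposition, with each tuple counted exactly once, is what is actually used when the sums are identified in (\ref{gs-26}) in the proof of Corollary \ref{cor-1}.

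\textbf{What your argument does not give.} The peeled-off maximal entry has no fixed position, so you obtain only a union of permutation classes $\mathcal{L}(k-l,\gamma_1,\ldots,\gamma_{m+1})$. These classes overlap for different $l$. For example, $(5,5,6,6)$ arises both from $l=1$ with $(5,5,6)\in\mathcal{I}_{(2,2)}$ and from $l=2$ with $(5,6,6)\in\mathcal{I}_{(2,3)}$. So your argument proves the lemma exactly as stated, but would not by itself justify that counting step in the corollary.
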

\begin{proof}
For any $(\gamma_1,\ldots,\gamma_{m+1})\in\mathcal{I}_{(m,k-s+l)}$, we can see from (\ref{gs-15}) that 
$$(k-l)+\gamma_1+\cdots+\gamma_{m+1}=(m+1)k+(k-s),$$
for $m=1, 2, \ldots, s-2$ and $l=1,2,\ldots,s-2$, which implies that $$(k-l, \gamma_1,\ldots, \gamma_{m+1})\in\mathcal{I}_{(m+1, k-s)},$$ i.e., $\mathcal{N}\subseteq\mathcal{I}_{(m+1,k-s)}.$
Consider the following equation
\begin{equation}\label{gs-17}
x_1+x_2+\cdots+x_{m+2}=(m+2)k-s,
\end{equation}
where $1\leq x_j<k, j=1,\ldots, m+2$. From (\ref{gs-15}), it is easy to see that the solution set of (\ref{gs-17}) is $\mathcal{I}_{(m+1,k-s)}.$
Now, we prove that $\mathcal{I}_{(m+1,k-s)}\subseteq \mathcal{N}$. Since $x_j<k, j=1,\ldots, m+2$, it is easy to verify from (\ref{gs-17}) that the smallest choice of $x_j$ can not less than $k-s+1+\frac{m(m+1)}{2}$. In fact, if $x_1=k-s+\frac{m(m+1)}{2}$, then we can take $$(x_2,x_3,\ldots,x_{m+1},x_{m+2})=(k-m,k-m+1,\ldots,k-1,k),$$
which implies that (\ref{gs-17}) holds. This is a contradiction with $x_{m+2}<k$.  

Set $x_{m+2}=k-l$, $l=1,\ldots,s-2$. This choice is reasonable, since $\frac{m(m+1)}{2}\geq 1$. Then, 
\begin{equation}\label{gs-18}
x_1+\cdots+x_{m+1}=mk+k-s+l.
\end{equation}
Clearly, the solution set of (\ref{gs-18}) is $\mathcal{I}_{(m, k-s+l)}$, which implies that the solution set of (\ref{gs-17}) with $x_{m+2}=k-l$ is 
 $$\bigcup_{(x_1,\ldots,x_{m+1})\in\mathcal{I}_{(m,k-s+l)}}\mathcal{L}(x_1,\ldots, x_{m+1}, k-l).$$
And then the solution set of (\ref{gs-17}) is 
 $$\bigcup_{l=1}^{s-2}\bigcup_{(x_1,\ldots,x_{m+1})\in\mathcal{I}_{(m,k-s+l)}}\mathcal{L}(x_1,\ldots, x_{m+1},k-l),$$
i.e., $\mathcal{I}_{(m+1,k-s)}\subseteq\mathcal{N}$.
\end{proof}

The following result can be proved similarly as the Corollary 2.4 in \cite{Zhao-mssp-2025}. However, they only considered the case of $T$-symmetric matrix polynomial and  did not provide a detail proof of the recursive steps. Based on the Lemma \ref{lem-2}, we provide a complete proof for the $(\star, \epsilon_1, \epsilon_2)$-structured matrix polynomials. 
\begin{corollary}\label{cor-1}
The matrices $A_0, A_1, \ldots, A_{k}$ given by (\ref{gs-7}) in Theorem \ref{thm-1} can be equivalently rewritten as
\begin{equation}\label{gs-16}
\left\{\begin{array}{l}
A_k=(XJ^{k-1}\Gamma X^{\star})^{-1},\\
A_{k-1}=-A_kXJ^k\Gamma  X^{\star}A_k,\\
A_j=-A_kXJ^{2k-j-1}\Gamma X^{\star}A_k+g_j, \ \ j=0, 1, \ldots, k-2,\\
\end{array}\right.
\end{equation}
where 
\begin{align}
&g_j=\sum_{t=1}^{k-j-1}(-1)^{t-1}g_j^{(t)},\label{gs-g-1}\\
&g_j^{(t)}=\sum_{(\gamma_1,\ldots,\gamma_{t+1})\in\mathcal{I}_{(t,j)}}(A_{\gamma_1}A_k^{-1}A_{\gamma_2}A_{k}^{-1}\cdots A_{k}^{-1}A_{\gamma_t}A_{k}^{-1}A_{\gamma_{t+1}}).\label{gs-g-2}
\end{align}
\end{corollary}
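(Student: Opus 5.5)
Throughout, set $M_r:=A_kXJ^{r}\Gamma X^{\star}A_k$; then (\ref{gs-7}) reads $A_j=-\sum_{i=j+1}^k A_iA_k^{-1}M_{k-j+i-1}$. The rows for $A_k$ and $A_{k-1}$ in (\ref{gs-16}) are identical to those in (\ref{gs-7}), since the sum for $j=k-1$ has the single term $i=k$. For $j\le k-2$ we split off the $i=k$ term, which is exactly $-M_{2k-j-1}=-A_kXJ^{2k-j-1}\Gamma X^{\star}A_k$. It therefore remains to show
\[
g_j=-\sum_{i=j+1}^{k-1}A_iA_k^{-1}M_{k-j+i-1}.
\]
The right-hand side still contains $(X,J)$ through each $M_{k-j+i-1}$. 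The idea is to eliminate these by inverting (\ref{gs-7}), i.e. expressing every $M_r$ with $k\le r\le 2k-2$ purely in terms of the coefficients.

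\textbf{Inverting the relations.} From (\ref{gs-7}) with $j=2k-1-r$ we get
\[
M_r=-A_{2k-1-r}-\sum_{i=2k-r}^{k-1}A_iA_k^{-1}M_{r-k+i},
\]
and every index $r-k+i$ on the right is strictly less than $r$. Starting from $M_k=-A_{k-1}$, induction on $r$ then yields a closed form for each $M_r$ as a signed sum of products $A_{\gamma_1}A_k^{-1}A_{\gamma_2}\cdots A_k^{-1}A_{\gamma_{t+1}}$ with all $\gamma$'s in $[1,k-1]$. Unwinding the recursion once more into the formula for $A_j$ amounts to proving by strong (downward) induction on $j$ the claim
\[
A_j=-M_{2k-j-1}+\sum_{t\ge1}(-1)^{t-1}\sum_{\mathcal I_{(t,j)}}A_{\gamma_1}A_k^{-1}\cdots A_k^{-1}A_{\gamma_{t+1}}.
\]

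\textbf{Organizing the induction.} It is cleanest to index by $s=k-j$, which matches the parameter $s$ of Lemma \ref{lem-2} with $k-s=j$ and $3\le s\le k+1$ covering $j=k-2,\dots,0$. Assume (\ref{gs-16}) holds for all $A_{j'}$ with $j'>j$. Substitute these expressions into $-\sum_{i=j+1}^{k-1}A_iA_k^{-1}M_{k-j+i-1}$, using $M_{k-j+i-1}=-A_{j-i+k}-(\text{lower terms})$ via the inverted relation. Write $i=k-l$ with $1\le l\le s-2$. The single-factor part $-A_{k-l}\,(-A_{j+l})\cdots$, after telescoping against the $g$'s of the higher indices, produces:
\begin{itemize}
\item[(i)] the $t=1$ contribution, namely the pairs $(\gamma_1,\gamma_2)$ with $\gamma_1+\gamma_2=k+j$, which is $\mathcal I_{(1,j)}$;
\item[(ii)] for $t\ge2$, terms of the form $A_{k-l}A_k^{-1}\cdot(\text{a word from }\mathcal I_{(t-1,\,j+l)})$, each carrying an extra factor $-1$, which accounts for the alternating sign $(-1)^{t-1}$.
\end{itemize}
Lemma \ref{lem-2} with $m=t-1$ states precisely that the union over $l$ of ``prepend $k-l$ to a tuple of $\mathcal I_{(t-1,j+l)}$'' (with all permutations) equals $\mathcal I_{(t,j)}$. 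This is what collapses the double sum into $g_j^{(t)}$. Terms with $t>k-j-1$ do not occur because $\mathcal I_{(t,j)}$ is empty there: a sum of $t+1$ entries, each at most $k-1$, cannot reach $tk+j$ once $t\ge k-j$.

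\textbf{Main obstacle.} The hardest step is the bookkeeping in (ii). Lemma \ref{lem-2} uses the permutation sets $\mathcal L$, whereas the recursion naturally generates ordered words in which $A_{k-l}$ is attached on one particular side. We must check that each ordered tuple in $\mathcal I_{(t,j)}$ arises exactly once, not with multiplicity. The plan is to avoid permutations entirely: reinterpret $\mathcal I_{(t,j)}$ as the set of ordered $(t+1)$-tuples with entries in $[1,k-1]$ and sum $tk+j$. Then show the recursion gives the bijection $(\gamma_1,\ldots,\gamma_{t+1})\mapsto(\gamma_1;\gamma_2,\ldots,\gamma_{t+1})$, splitting off the first letter $\gamma_1=k-l$. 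Here $\gamma_1\ge j+1$ is forced by the sum constraint, which gives $l\le s-2$, and the rest lies in $\mathcal I_{(t-1,j+l)}$. This ordered decomposition is the content of Lemma \ref{lem-2} read tuple-by-tuple, and it guarantees there is no double counting.
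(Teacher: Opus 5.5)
Your proposal is correct and follows essentially the paper's proof. Both arguments run a downward induction on $j$ (with $s=k-j$). Each substitutes the inductive expressions $XJ^{2k-j'-1}\Gamma X^{\star}=-A_k^{-1}A_{j'}A_k^{-1}+A_k^{-1}g_{j'}A_k^{-1}$ into (\ref{gs-7}), identifies the single-product terms with $\mathcal{I}_{(1,j)}$, and merges the remaining terms through the decomposition of Lemma \ref{lem-2}.

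Your first-letter bijection $\mathcal{I}_{(t,j)}\leftrightarrow\bigcup_{l}\{k-l\}\times\mathcal{I}_{(t-1,j+l)}$ is a cleaner justification of (\ref{gs-26}) than the paper's permutation-union form of Lemma \ref{lem-2}, because it shows each word is counted exactly once. Two small corrections: the sum constraint gives $\gamma_1\ge j+t\ge j+2$ for $t\ge 2$ (not merely $\gamma_1\ge j+1$), which is what yields $l\le s-2$; and the induction range is $2\le s\le k$, not $3\le s\le k+1$.
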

\begin{proof}
Since $A_k$ is nonsingular, the matrix $A_{k-1}$ given by (\ref{gs-16}) can be rewritten as 
\begin{equation}\label{gs-19}
XJ^k\Gamma X^{\star}=-A_k^{-1}A_{k-1}A_k^{-1}.
\end{equation}
By Theorem \ref{thm-1}, we know that 
\begin{equation}\label{gs-20}
A_{k-2}=-A_kXJ^{k+1}\Gamma X^{\star}A_k-A_{k-1}XJ^k\Gamma X^{\star}A_k.
\end{equation}
 Substituting (\ref{gs-19})  into (\ref{gs-20}) leads to 
 \begin{equation}\label{gs-21}
 A_{k-2}=-A_kXJ^{k+1}\Gamma X^{\star}A_k+A_{k-1}A_k^{-1}A_{k-1}.
 \end{equation}
Clearly, $\mathcal{I}_{(1,k-2)}=\{(k-1, k-1)\}$, which implies that (\ref{gs-21}) can be rewritten as $A_{k-2}=-A_kXJ^{k+1}\Gamma X^{\star}A_k+g_{k-2}$, where $g_{k-2}$ is defined by (\ref{gs-g-1}). Suppose that the matrices $A_{k-3}, A_{k-4}, \ldots, A_{k-s+1}$ can be rewritten as in (\ref{gs-16}), where $3\leq s\leq k+1$. Then, 
\begin{equation}\label{gs-22} 
\left\{\begin{array}{ll}
XJ^k\Gamma X^{\star} & =-A_k^{-1}A_{k-1}A_k^{-1}\\
XJ^{k+1}\Gamma X^{\star} & =-A_k^{-1}A_{k-2}A_k^{-1}+A_k^{-1}g_{k-2}A_k^{-1},\\
& \vdots\\
XJ^{k+s-2}\Gamma X^{\star} & =-A_{k}^{-1}A_{k-s+1}A_k^{-1}+A_k^{-1}g_{k-s+1}A_k^{-1}.\\
\end{array}\right.
\end{equation}
Now, we prove that $A_{k-s}$ can also be rewritten as in (\ref{gs-16}). By Theorem \ref{thm-1}, we have 
\begin{equation}\label{gs-23}
A_{k-s}=-\sum_{i=k-s+1}^kA_iXJ^{s+i-1}\Gamma X^{\star}A_k.
\end{equation}
Substituting (\ref{gs-22}) into (\ref{gs-23}), we can obtain that 
\begin{equation}\label{gs-24}
\begin{array}{ll}
A_{k-s}&=-A_kXJ^{k+s-1}\Gamma X^{\star}A_k+\sum_{i=1}^{s-1}A_{k-i}A_k^{-1}A_{k-s+i}\\
&\ \ \ -\sum_{l=1}^{s-2}\left[\sum_{t=1}^{s-l-1}(-1)^{t-1}A_{k-l}A_k^{-1}g_{k-s+l}^{(t)}\right].\\
\end{array}
\end{equation}
By the definition in (\ref{gs-15}), it is easy to see that $\mathcal{I}_{(1,k-s)}=\bigcup\limits_{i=1}^{s-1}\{(k-i, k-s+i)\}.$ Then, we have 
\begin{equation}\label{gs-25}
\sum\limits_{i=1}^{s-1}A_{k-i}A_k^{-1}A_{k-s+i}=\sum_{(\gamma_1,\gamma_2)\in\mathcal{I}_{(1,k-s)}}A_{\gamma_1}A_k^{-1}A_{\gamma_2}=g_{k-s}^{(1)}.
\end{equation}
Next, we consider the third term of $A_{k-s}$. Set $t=m$, where $1\leq m\leq s-2$, then the third term of $A_{k-s}$ is 
$$(-1)^{m}\sum_{l=1}^{s-2}A_{k-l}A_k^{-1}g_{k-s+l}^{(m)}.$$
For any $(\gamma_1,\ldots,\gamma_{m+1})\in\mathcal{I}_{(m,k-s+l)}$, we have 
$$(k-l)+\gamma_1+\cdots+\gamma_{m+1}=(m+1)k+(k-s),$$
where $1\leq l\leq s-2$. It follows from Lemma \ref{lem-2} that 
$$\mathcal{I}_{(m+1,k-s)}=\bigcup_{l=1}^{s-2}\bigcup_{(\gamma_1,\ldots,\gamma_{m+1})\in\mathcal{I}_{(m,k-s+l)}}\mathcal{L}(\gamma_1,\ldots, \gamma_{m+1},k-l).$$
Then, we can see from (\ref{gs-g-2}) that  
\begin{equation}\label{gs-26}
\begin{array}{l}
(-1)^{m}\sum\limits_{l=1}^{s-2}A_{k-l}A_k^{-1}g_{k-s+l}^{(m)}\\
=(-1)^m\sum\limits_{l=1}^{s-2}\sum\limits_{(\gamma_1,\ldots,\gamma_{m+1})\in\mathcal{I}_{(m,k-s+l)}}(A_{k-l}A_k^{-1}A_{\gamma_1}A_k^{-1}\cdots A_{k}^{-1}A_{\gamma_m}A_{k}^{-1}A_{\gamma_{m+1}})\\
=(-1)^mg_{k-s}^{(m+1)}.\\
\end{array}
\end{equation}
Substituting (\ref{gs-25}) and (\ref{gs-26}) into (\ref{gs-24}) leads to 
$$\begin{array}{ll}
A_{k-s}&=-A_kXJ^{k+s-1}\Gamma X^{\star}A_k+g_{k-s}^{(1)}+\sum_{m=1}^{s-2}(-1)^mg_{k-s}^{(m+1)}\\
       &=-A_kXJ^{k+s-1}\Gamma X^{\star}A_k+\sum_{m=0}^{s-2}(-1)^{m}g_{k-s}^{(m+1)},\\
       &=-A_kXJ^{k+s-1}\Gamma X^{\star}A_k+\sum_{m=1}^{s-1}(-1)^{m-1}g_{k-s}^{(m)},\\
       &=-A_kXJ^{k+s-1}\Gamma X^{\star}A_k+g_{k-s},\\
\end{array}$$
i.e., $A_{k-s}$ can also be rewritten as in (\ref{gs-16}). This completes the proof. 
\end{proof}

\begin{example}
Consider the regular $(\star, \epsilon_1, \epsilon_2)$-structured matrix polynomial:
$$\lambda^5A_5+\lambda^4A_4+\lambda^3A_3+\lambda^2A_2+\lambda A_1+A_0,$$
which has a standard pair $(X,\ J)$. 
For lack of space, we only give the expression of $A_1$. By Theorem \ref{thm-1}, we have
$$A_1 =-A_5XJ^8\Gamma X^{\star}A_5-A_4XJ^7\Gamma X^{\star}A_5-A_3XJ^6\Gamma X^{\star}A_5-A_2XJ^5\Gamma X^{\star}A_5.$$ 
According to the proof of Corollary \ref{cor-1}, we know that the matrix $A_1$ can be expressed as
$$\begin{array}{ll}
A_1=& -A_5XJ^8\Gamma X^{\star}A_5+A_4A_5^{-1}A_4A_5^{-1}A_4A_5^{-1}A_4-A_4A_5^{-1}A_4A_5^{-1}A_3\\
&-A_4A_5^{-1}A_3A_5^{-1}A_4-A_3A_5^{-1}A_4A_5^{-1}A_3+A_4A_5^{-1}A_2+A_2A_5^{-1}A_4.\\
\end{array}$$
By (\ref{gs-15}), it is easy to verify that 
$$\mathcal{I}_{(3, 1)}=\{(4,4,4,4)\},\ \mathcal{I}_{(2,1)}=\{(4,4,3),(4,3,4),(3,4,4)\},\ \mathcal{I}_{(1,1)}=\{(2,4),(4,2)\}.$$
Then, we have 
$$\left\{\begin{array}{l}
g_1^{(3)}=A_4A_5^{-1}A_4A_5^{-1}A_4A_5^{-1}A_4,\\
g_1^{(2)}=A_4A_5^{-1}A_4A_5^{-1}A_3+A_4A_5^{-1}A_3A_5^{-1}A_4+A_3A_5^{-1}A_4A_5^{-1}A_3,\\
g_1^{(1)}=A_4A_5^{-1}A_2+A_2A_5^{-1}A_4.\\
\end{array}\right.$$
It follows that 
$$A_1=-A_5XJ^8\Gamma X^{\star}A_5+g_1^{(3)}-g_1^{(2)}+g_1^{(1)}=-A_5XJ^8\Gamma X^{\star}A_5+g_1,$$
where $g_1$ is defined by (\ref{gs-g-1}). 
\end{example}

\begin{remark}
(Recovery of results in Chu and Xu \cite{Chu-MC-2009} and Jia and Wei \cite{Jia-siam-2011}) Given a real quadratic $\lambda$-matrix $Q(\lambda):=\lambda^2A_2+\lambda A_1+A_0$, where $A_2, A_1, A_0$ are symmetric matrices in $\mathbb{R}^{n\times n}$ and $A_2$ is nonsingular. A sufficient and necessary provided in \cite{Chu-MC-2009} that a real matrix pair $(\mathfrak{X}, \mathfrak{J})\in\mathbb{R}^{n\times 2n}\times\mathbb{R}^{2n\times 2n}$ is a real standard pair of $Q(\lambda)$, is: there exists a symmetric and nonsingular matrix $S\in\mathbb{R}^{2n\times 2n}$ such that
\begin{align}
&\mathfrak{X}S\mathfrak{X}^T=0,\label{xzgs-2}\\
&S^{-1}\mathfrak{J}=\mathfrak{J}^TS^{-1}.\label{xzgs-3}
\end{align}
And the matrix coefficients $(A_2,A_1,A_0)$ can be factorized in terms of $(\mathfrak{X},\mathfrak{J})$ as follows:
\begin{equation}\label{xzgs-1}
\left\{\begin{array}{l}
A_2=(\mathfrak{X}\mathfrak{J}S\mathfrak{X}^T)^{-1},\\
A_1=-A_2\mathfrak{X}\mathfrak{J}^2S\mathfrak{X}^TA_2,\\
A_0=-A_2\mathfrak{X}\mathfrak{J}^3S\mathfrak{X}^TA_2+A_1A_2^{-1}A_1.\\
\end{array}\right.
\end{equation}

Substituting $\star=T$, $\epsilon_1=1$ and $\epsilon_2=1$ into (\ref{gs-5}), it is easy to verify that $S\in\mathcal{S}_{(\mathfrak{J}, T, 1, 1)}$. Clearly, the condition (\ref{xzgs-2}) provided in \cite{Chu-MC-2009} coincide with (\ref{gs-6}) by setting $k=2$. Moreover, we can see from (\ref{gs-15}) that $\mathcal{I}_{(1,0)}=\{(1,1)\}$. Then the expression of $A_0$ in (\ref{xzgs-1}) can be rewritten as $A_0=-A_2\mathfrak{X}\mathfrak{J}^3S\mathfrak{X}^TA_2+g_0$, where $g_0=A_1A_2^{-1}A_1$. Therefore, the expressions in (\ref{xzgs-1}) coincide with the expressions as those given in Corollary \ref{cor-1} by setting $k=2$ and $\star=T$.

Let $G(\lambda):=\lambda^2M+\lambda D+K$, where $M, D, K\in\mathbb{R}^{n\times n}$, $M^T=M$, $D^T=-D$, $K^T=K$, and $M$ is nonsingular. Clearly, $G(\lambda)$ is an $T$-even. Similarly, it is easy to verify that the real spectral decomposition of $G(\lambda)$ given in \cite{Jia-siam-2011} coincides with the results obtained in this paper as given in Theorem \ref{thm-1} and Corollary (\ref{cor-1}) by setting $\mathbb{K}=\mathbb{R}$, $\star=T$, $k=2$, $\epsilon_1=1$ and $\epsilon_2=-1$. 
\end{remark}

\subsection{Singular matrix polynomial}\label{sb-1}

Note that $P(\lambda)$ is singular if the coefficient matrix $A_k$ is singular. And in this case, $P(\lambda)$ has infinity eigenvalue. For simplicity, we assume that the coefficient matrix $A_0$ of $P(\lambda)$ is nonsingular, i.e., all the finite eigenvalues of $P(\lambda)$ are nonzero. Let
\begin{align}
&X=[X_F, X_{\infty}]\in\mathbb{K}^{n\times kn},\label{gss-1}\\
&J=\mbox{diag}(J_F^{-1}, J_{\infty})\in\mathbb{K}^{kn\times kn}, \label{gss-2}
\end{align}
where $(J_F, X_F)\in\mathbb{K}^{l\times l}\times \mathbb{K}^{n\times l}$ is the finite standard pair of $P(\lambda)$ and $(J_{\infty}, X_{\infty})\in\mathbb{K}^{(kn-l)\times (kn-l)}\times \mathbb{K}^{n\times (kn-l)}$ is the infinite standard pair of $P(\lambda)$ \cite{Lancaster-book}, $J_{\infty}$ is a Jordan matrix corresponding to the eigenvalue zero. Let
\begin{equation}\label{gss-3}
Q(\mu)=\mu^kA_0+\mu^{k-1}A_1+\cdots+\mu A_{k-1}+\mu A_k.
\end{equation}
Clearly, the nonzero eigenvalue $\mu$ of $Q(\mu)$ equals to $\lambda^{-1}$, where $\lambda$ is the nonzero finite  eigenvalue of $P(\lambda)$. It follows from the Theorem 7.2 of \cite{Lancaster-book} that $(J_{\infty}, X_{\infty})$ is an infinite standard pair of $P(\lambda)$ if and only if $(J_{\infty}, X_{\infty})$ is a finite standard pair of $Q(\mu)$. Note that $Q(\mu)$ is regular, since $A_0$ is nonsingular. Then, the spectral decomposition of singular matrix polynomial $P(\lambda)$ is equivalent to the spectral decomposition of regular matrix polynomial $Q(\mu)$. It follows from Definition \ref{lem-1} that $(J, X)$ is a standard pair of $Q(\mu)$ if and only if $X_L$ is nonsingular and 
\begin{equation}\label{gss-7}
A_0XJ^k+A_1XJ^{k-1}+\cdots+A_{k-1}XJ+A_kX=0.
\end{equation}

 For any matrix $\Phi\in\mathbb{K}^{m\times m}$, let
\begin{equation}\label{gs-5-0}
\mathcal{D}_{(\Phi,\star,\epsilon_1,\epsilon_2)}=\left\{S\in\mathbb{K}^{m\times m}|S^{\star}=\epsilon_1S,\ S\Phi^{\star}=\epsilon\Phi S\right\}.
\end{equation}  
Define
$$\mathcal{T}:=(X_L^{\star}\mathcal{H}X_L)^{-1}\in\mathbb{K}^{kn\times kn},$$
where $\mathcal{H}\in\mathbb{K}^{kn\times kn}$ satisfies: if $k$ is even, then $\mathcal{H}^{\star}=\epsilon_2\mathcal{H}$ and 
$${
\mathcal{H}=\begin{bmatrix}
A_{k-1} & A_{k-2} & A_{k-3}  & \cdots & A_{1} & A_0\\
\epsilon A_{k-2} & \epsilon A_{k-3} & \epsilon A_{k-4}  & \cdots & \epsilon A_0 & 0\\
A_{k-3} & A_{k-4} & A_{k-5}  & \cdots & 0 & 0\\
 \vdots & \vdots & \vdots & \ddots & \vdots & \vdots\\
A_{1} & A_0 & 0  & \cdots & 0 & 0\\
\epsilon A_0 & 0 & 0  & \cdots & 0 & 0\\
\end{bmatrix},}
$$
and if $k$ is odd, then $\mathcal{H}^{\star}=\epsilon_1\mathcal{H}$ and 
$${
\mathcal{H}=\begin{bmatrix}
A_{k-1} & A_{k-2} & A_{k-3}  & \cdots & A_{1} & A_0\\
\epsilon A_{k-2} & \epsilon A_{k-3} & \epsilon A_{k-4}  & \cdots & \epsilon A_0 & 0\\
A_{k-3} & A_{k-4} & A_{k-5}  & \cdots & 0 & 0\\
 \vdots & \vdots & \vdots & \ddots & \vdots & \vdots\\
\epsilon A_{1} & \epsilon A_0 & 0  & \cdots & 0 & 0\\
 A_0 & 0 & 0  & \cdots & 0 & 0\\
\end{bmatrix}.}
$$ 
Similar to the proof of Theorem \ref{thm-1}, we can obtain the following results. 
\begin{theorem}\label{thm-2}
Suppose that $k$ is even. Then $(X,\ J)$ defined in (\ref{gss-1}) and (\ref{gss-2}) is a standard pair of regular  $Q(\mu)$  in (\ref{gss-3}), i.e., (\ref{gss-7}) holds if and only if the matrix $X_L$ defined in (\ref{xgs-1}) and $XJ^{k-1}\mathcal{T} X^{\star}$ are nonsingular, and there exists an $kn\times kn$ nonsingular matrix $\mathcal{T}$ satisfying $\mathcal{T}\in\mathcal{S}_{(J,\star,\epsilon_1,\epsilon_2)}$ and 
\begin{equation*}
XJ^r\mathcal{T} X^{\star}=0, \ r=0, 1, \ldots, k-2.
\end{equation*}
And in this case, the coefficient matrices $A_0, A_1, \ldots, A_k$ of $Q(\mu)$ can be expressed as 
\begin{equation*}
\left\{\begin{array}{l}
A_0=(XJ^{k-1}\mathcal{T} X^{\star})^{-1},\\
A_{k-j}=-\sum_{i=j+1}^kA_iXJ^{k-j+i-1}\mathcal{T} X^{\star}A_0,\ j=0, 1, \ldots, k-1.
\end{array}\right.
\end{equation*}
\end{theorem}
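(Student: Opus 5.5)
The plan is to reduce Theorem \ref{thm-2} to Theorem \ref{thm-1} by applying the latter to the reversed polynomial $Q(\mu)$, with suitably relabelled coefficients. Set $B_j:=A_{k-j}$ for $j=0,1,\ldots,k$, so that $Q(\mu)=\sum_{j=0}^k\mu^jB_j$ and (\ref{gss-7}) reads $B_kXJ^k+\cdots+B_1XJ+B_0X=0$. This is exactly (\ref{gs-2}) for the polynomial with coefficients $B_j$. Since $A_0=B_k$ is nonsingular, $Q(\mu)$ is regular in the sense of the preceding subsection.

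First I check that $Q$ is again $(\star,\epsilon_1,\epsilon_2)$-structured with the same parameters. This is where evenness of $k$ is used: $k-j$ has the same parity as $j$, so $B_j^{\star}=A_{k-j}^{\star}$ equals $\epsilon_1B_j$ for $j$ even and $\epsilon_2B_j$ for $j$ odd. With the $B_j$ inserted, the matrix $\mathcal{A}$ of (\ref{gs-3}) for $Q$ (the even-$k$ form) is literally the matrix $\mathcal{H}$ given for even $k$. Consequently $\mathcal{H}^{\star}=\epsilon_2\mathcal{H}$ as stated, and $\mathcal{T}=(X_L^{\star}\mathcal{H}X_L)^{-1}$ is precisely the matrix $\Gamma$ of (\ref{gs-4}) attached to $Q$ and the pair $(X,J)$.

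Next I apply Theorem \ref{thm-1} verbatim to $Q$. Note that its proof uses only the linearization identity (\ref{gs-3}), the structure of the coefficients, and the nonsingularity of the leading coefficient. It never uses any special form of $J$, so the block form (\ref{gss-1})--(\ref{gss-2}) imposes no restriction. One small point: Theorem \ref{thm-1} is phrased with real $X,J$, while here they lie in $\mathbb{K}$. The proof only uses $\star$-transposes, so it goes through unchanged over $\mathbb{K}$, and I would state this explicitly. The result is that $(X,J)$ satisfies (\ref{gss-7}) with $X_L$ nonsingular if and only if $X_L$ and $XJ^{k-1}\mathcal{T}X^{\star}$ are nonsingular and there is a nonsingular $\mathcal{T}\in\mathcal{S}_{(J,\star,\epsilon_1,\epsilon_2)}$ with $XJ^r\mathcal{T}X^{\star}=0$ for $r=0,\ldots,k-2$.

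Finally I translate the formulas (\ref{gs-7}) back. This gives $B_k=A_0=(XJ^{k-1}\mathcal{T}X^{\star})^{-1}$ and
$$B_j=-\sum_{i=j+1}^kB_iXJ^{k-j+i-1}\mathcal{T}X^{\star}B_k,$$
and substituting $B_j=A_{k-j}$ and $B_k=A_0$ yields the stated expressions for $A_{k-j}$. The main obstacle is this index bookkeeping. The displayed formula in the statement writes $A_i$ inside the sum where, after relabelling, one gets $B_i=A_{k-i}$; I would check carefully which labelling is intended and present the formula in terms of the $B_i$ to avoid ambiguity. The structural part needs no new work beyond verifying that parity is preserved under $j\mapsto k-j$. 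For odd $k$, by contrast, parity is flipped, the roles of $\epsilon_1$ and $\epsilon_2$ swap, and one would land in $\mathcal{D}_{(J,\star,\epsilon_1,\epsilon_2)}$ instead; this explains why the theorem assumes $k$ even.
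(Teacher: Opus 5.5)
Your reduction is correct and is essentially what the paper means by ``similar to the proof of Theorem~\ref{thm-1}''. For even $k$ the relabelled coefficients $B_j=A_{k-j}$ keep the $(\star,\epsilon_1,\epsilon_2)$ parity pattern, $\mathcal{H}$ is exactly the matrix $\mathcal{A}$ built from the $B_j$, and $\mathcal{T}$ is the corresponding $\Gamma$, so Theorem~\ref{thm-1} applies directly as a black box. You are also right about the indices: the correct translation of (\ref{gs-7}) is $A_{k-j}=-\sum_{i=j+1}^kA_{k-i}XJ^{k-j+i-1}\mathcal{T}X^{\star}A_0$, which agrees with $A_1=-A_0XJ^k\mathcal{T}X^{\star}A_0$ in Corollary~\ref{cor-2}, so the $A_i$ in the displayed statement is a typo.
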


\begin{theorem}\label{thm-3}
Suppose that $k$ is odd. Then $(X,\ J)$ defined in (\ref{gss-1}) and (\ref{gss-2}) is a standard pair of regular  $Q(\mu)$ in (\ref{gss-3}), i.e., (\ref{gss-7}) holds if and only if the matrix $X_L$ defined in (\ref{xgs-1}) and $XJ^{k-1}\mathcal{T} X^{\star}$ are nonsingular, and there exists an $kn\times kn$ nonsingular matrix $\mathcal{T}$ satisfying $\mathcal{T}\in\mathcal{D}_{(J,\star,\epsilon_1,\epsilon_2)}$ and 
\begin{equation*}
XJ^r\mathcal{T} X^{\star}=0, \ r=0, 1, \ldots, k-2.
\end{equation*}
And in this case, the coefficient matrices $A_0, A_1, \ldots, A_k$ of $Q(\mu)$ can be expressed as 
\begin{equation*}
\left\{\begin{array}{l}
A_0=(XJ^{k-1}\mathcal{T} X^{\star})^{-1},\\
A_{k-j}=-\sum_{i=j+1}^kA_iXJ^{k-j+i-1}\mathcal{T} X^{\star}A_0,\ j=0, 1, \ldots, k-1.
\end{array}\right.
\end{equation*}
\end{theorem}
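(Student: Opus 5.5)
Theorem \ref{thm-3} follows by applying the argument of Theorem \ref{thm-1} to the reversed polynomial $Q(\mu)=\sum_{j=0}^k\mu^jB_j$ with $B_j:=A_{k-j}$. Its leading coefficient $B_k=A_0$ is nonsingular, and (\ref{gss-7}) is exactly (\ref{gs-2}) for $Q$. The whole proof then reduces to identifying the structure of the $B_j$ and redoing the bookkeeping with the correct signs.

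\textbf{Structure of $Q$.} Since $k$ is odd, $j$ and $k-j$ have opposite parity. Hence $B_j^{\star}=\epsilon_2B_j$ for $j$ even and $B_j^{\star}=\epsilon_1B_j$ for $j$ odd. So $Q$ is a $(\star,\epsilon_2,\epsilon_1)$-structured polynomial: the roles of $\epsilon_1,\epsilon_2$ are swapped, while $\epsilon=\epsilon_1\epsilon_2$ is unchanged. The matrix $\mathcal{H}$ is precisely the matrix $\mathcal{A}$ of (\ref{gs-3}) built from the $B_j$ in the odd case. The identity $\mathcal{A}^{\star}=\epsilon_2\mathcal{A}$ for $P$ becomes $\mathcal{H}^{\star}=\epsilon_1\mathcal{H}$ for $Q$, in agreement with the stated property of $\mathcal{H}$.

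\textbf{Necessity.} I will repeat the computation of $\Gamma^{-1}J$ in Theorem \ref{thm-1} with $\mathcal{T}^{-1}=X_L^{\star}\mathcal{H}X_L$. This uses the analogues of (\ref{gs-8}) and (\ref{gs-9}) for $Q$, obtained by multiplying (\ref{gss-7}) on the left by $X^{\star}$, and by taking its $\star$-transpose and multiplying on the right by $X$. The sign function $\tau$ in (\ref{g-t}) depends only on $\epsilon$, so the telescoping is identical and gives $\mathcal{T}^{-1}J=\epsilon J^{\star}\mathcal{T}^{-1}$, i.e. $\mathcal{T}J^{\star}=\epsilon J\mathcal{T}$. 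From $\mathcal{H}^{\star}=\epsilon_1\mathcal{H}$ we get $\mathcal{T}^{\star}=\epsilon_1\mathcal{T}$, so $\mathcal{T}\in\mathcal{D}_{(J,\star,\epsilon_1,\epsilon_2)}$. Reading off the last block column of $X_L\mathcal{T}X_L^{\star}\mathcal{H}=I$ gives $XJ^r\mathcal{T}X^{\star}=0$ for $r\le k-2$ and $B_k=A_0=(XJ^{k-1}\mathcal{T}X^{\star})^{-1}$. Post-multiplying (\ref{gss-7}) by $J^{s}\mathcal{T}X^{\star}$ for successive $s$ yields the recursive formulas for the remaining coefficients, exactly as in (\ref{gs-7}).

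\textbf{Sufficiency.} I will reproduce (\ref{gs-11}) with $\mathcal{T}$ in place of $\Gamma$. Nonsingularity of $X_L$ and $\mathcal{T}$ forces $XJ^{k-1}\mathcal{T}X^{\star}$ to be nonsingular. Next I check that the recursively defined $B_j$ have the $(\star,\epsilon_2,\epsilon_1)$ structure, using $J^s\mathcal{T}=\epsilon^s\mathcal{T}(J^s)^{\star}$ and $\mathcal{T}^{\star}=\epsilon_1\mathcal{T}$. For instance, $B_k^{\star}=\epsilon_1\epsilon^{k-1}B_k=\epsilon_1B_k$ because $k$ is odd, and this matches the fact that $B_k=A_0$ must satisfy $A_0^{\star}=\epsilon_1A_0$. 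An induction down the recursion, as done for $A_{k-2}$ in Theorem \ref{thm-1}, handles the remaining coefficients. Then $X_L\mathcal{T}X_L^{\star}=\mathcal{H}^{-1}$ gives $X_L^{-1}=\mathcal{T}X_L^{\star}\mathcal{H}$. The computation (\ref{gs-13}) with the sign identity (\ref{gs-14}) then gives $B_kXJ^k=-[B_0,\dots,B_{k-1}]X_L$, which is (\ref{gss-7}).

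\textbf{Main obstacle.} The delicate part is the sign bookkeeping in the sufficiency step: the symmetry of $\mathcal{T}$ flips from $\epsilon_2$ to $\epsilon_1$ while $\epsilon$ stays fixed. I would handle it by verifying carefully that every $\epsilon_2$ appearing in the $\star$-transposes of Theorem \ref{thm-1} becomes $\epsilon_1$, and vice versa. Concretely, I would check that the final step $-\epsilon_1A_0^{\star}$, $-\epsilon_2A_1^{\star},\dots$ there becomes $-\epsilon_2B_0^{\star}$, $-\epsilon_1B_1^{\star},\dots$, each of which equals $-B_j$.
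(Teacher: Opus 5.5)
Your proposal is correct and follows the paper's route. The paper proves this theorem only by the remark that it is ``similar to the proof of Theorem~\ref{thm-1}'', and your reduction to the reversed polynomial with $B_j=A_{k-j}$ is what that remark means: this polynomial is $(\star,\epsilon_2,\epsilon_1)$-structured, so $\mathcal{H}$ plays the role of $\mathcal{A}$, $\mathcal{T}^{\star}=\epsilon_1\mathcal{T}$, and $\epsilon$ is unchanged (since Theorem~\ref{thm-1} holds for every choice of $\epsilon_1,\epsilon_2$, you may also simply invoke it for $Q$).

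One point you should state explicitly: the recursion you obtain is $A_{k-j}=-\sum_{i=j+1}^{k}A_{k-i}XJ^{k-j+i-1}\mathcal{T}X^{\star}A_0$. So the factor $A_i$ in the displayed formula of the statement has to be read as $A_{k-i}$; for example, $A_1=-A_0XJ^{k}\mathcal{T}X^{\star}A_0$, as in Corollary~\ref{cor-2}. As printed, the formula is false in general: for $A_k=0$ it would force $A_1=0$.
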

\begin{corollary}\label{cor-2}
The matrices $A_0, A_1, \ldots, A_{k}$ given by Theorem \ref{thm-2} and Theorem \ref{thm-3} can be equivalently rewritten as
\begin{equation*}
\left\{\begin{array}{l}
A_0=(XJ^{k-1}\mathcal{T} X^{\star})^{-1},\\
A_{1}=-A_0XJ^k\mathcal{T}  X^{\star}A_0,\\
A_{k-j}=-A_0XJ^{2k-j-1}\mathcal{T} X^{\star}A_0+h_j, \ \ j=0, 1, \ldots, k-2,\\
\end{array}\right.
\end{equation*}
where 
\begin{align*}
&h_j=\sum_{t=1}^{k-j-1}(-1)^{t-1}h_j^{(t)},\\
&h_j^{(t)}=\sum_{(\gamma_1,\ldots,\gamma_{t+1})\in\mathcal{I}_{(t,j)}}(A_{\gamma_1}A_0^{-1}A_{\gamma_2}A_{0}^{-1}\cdots A_{0}^{-1}A_{\gamma_t}A_{0}^{-1}A_{\gamma_{t+1}}).
\end{align*}
\end{corollary}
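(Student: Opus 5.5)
Corollary \ref{cor-2} is the reversed-polynomial version of Corollary \ref{cor-1}, so I would obtain it by a relabelling rather than a new computation. Set $B_j:=A_{k-j}$ for $j=0,1,\ldots,k$, so that $Q(\mu)=\sum_{j=0}^k\mu^jB_j$ has nonsingular leading coefficient $B_k=A_0$. Equation (\ref{gss-7}) is then exactly (\ref{gs-2}) for the $B_j$. Theorems \ref{thm-2} and \ref{thm-3} are Theorem \ref{thm-1} applied to the $B_j$; the $\epsilon$-bookkeeping of $\mathcal{H}$, and hence the membership of $\mathcal{T}$ in $\mathcal{S}$ or $\mathcal{D}$ according to the parity of $k$, affects only the symmetry class of $\mathcal{T}$. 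It does not enter the algebraic recursion. In the $B$-notation that recursion reads
$$B_k=(XJ^{k-1}\mathcal{T}X^{\star})^{-1},\qquad B_j=-\sum_{i=j+1}^kB_iXJ^{k-j+i-1}\mathcal{T}X^{\star}B_k .$$
This is the only input used in the proof of Corollary \ref{cor-1}.

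The key steps, in order, repeat the proof of Corollary \ref{cor-1} verbatim with $A$ replaced by $B$ and $\Gamma$ by $\mathcal{T}$.
\begin{enumerate}
\item Write $B_{k-1}=-B_kXJ^k\mathcal{T}X^{\star}B_k$ and solve for $XJ^k\mathcal{T}X^{\star}=-B_k^{-1}B_{k-1}B_k^{-1}$.
\item Substitute this into the expression for $B_{k-2}$ to get $g_{k-2}=B_{k-1}B_k^{-1}B_{k-1}$, using $\mathcal{I}_{(1,k-2)}=\{(k-1,k-1)\}$.
\item Induct on $s$. Assuming the formulas for $B_{k-1},\ldots,B_{k-s+1}$, invert them to express $XJ^{k+r}\mathcal{T}X^{\star}$ for $r\le s-2$ as in (\ref{gs-22}), and substitute into the formula for $B_{k-s}$ as in (\ref{gs-24}).
\item Regroup the resulting terms. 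The single products give $g^{(1)}_{k-s}$ via $\mathcal{I}_{(1,k-s)}$, and the nested products give $(-1)^mg^{(m+1)}_{k-s}$ via Lemma \ref{lem-2}, exactly as in (\ref{gs-26}).
\end{enumerate}
Lemma \ref{lem-2} is purely combinatorial on index tuples, so it applies unchanged.

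Finally, translate back with $B_j=A_{k-j}$, so that the index set $\mathcal{I}_{(t,j)}$ labels products $B_{\gamma_1}B_k^{-1}\cdots B_k^{-1}B_{\gamma_{t+1}}$. This yields $A_0=(XJ^{k-1}\mathcal{T}X^\star)^{-1}$, $A_1=-A_0XJ^k\mathcal{T}X^\star A_0$, and $A_{k-j}=-A_0XJ^{2k-j-1}\mathcal{T}X^\star A_0+h_j$.

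The main obstacle is the index convention in the definition of $h_j^{(t)}$. It must be read with the relabelled coefficients, i.e. with $A_{\gamma}$ meaning $B_\gamma=A_{k-\gamma}$, consistent with the roles $A_0\leftrightarrow A_k$. I would state this identification explicitly and check it on $k=2$, where one needs $A_2=-A_0XJ^3\mathcal{T}X^\star A_0+A_1A_0^{-1}A_1$; this case is symmetric in the labelling, so it does not distinguish the conventions. I would therefore also check $k=3$, $j=0$ to confirm the identification before concluding.
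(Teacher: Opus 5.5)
Your proposal is correct and is essentially the argument the paper intends: the paper gives no separate proof, and the corollary is meant to follow by running Theorem \ref{thm-1} and the induction of Corollary \ref{cor-1} (with Lemma \ref{lem-2}) on $Q(\mu)$, whose leading coefficient is $A_0$, exactly as in your relabelling $B_j=A_{k-j}$, where, as you note, the symmetry class of $\mathcal{T}$ plays no role. Your index worry is justified and should be stated as a correction rather than a convention: for $k=3$, $j=1$ one finds $A_2=-A_0XJ^{4}\mathcal{T}X^{\star}A_0+A_1A_0^{-1}A_1$, while the literal reading with $\mathcal{I}_{(1,1)}=\{(2,2)\}$ gives $A_2A_0^{-1}A_2$, so $A_{\gamma}$ in $h_j^{(t)}$ (and likewise $A_i$ in the sums of Theorems \ref{thm-2} and \ref{thm-3}) must be read as $A_{k-\gamma}$ (resp.\ $A_{k-i}$).
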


\section{Structures of $\mathcal{S}_{(J,\star,\epsilon_1,\epsilon_2)}$ for regular $P(\lambda)$}
Throughout of this section, we always assume that the coefficient matrix $A_k$ of $P(\lambda)$ is nonsingular. In this section, we discuss the structures of $\mathcal{S}_{(J,\star,\epsilon_1,\epsilon_2)}$. 
Let $J(\lambda_j)=\lambda_jI_{n_j}+N_j$ be the Jordan canonical form associated with the eigenvalue $\lambda_j$, which may contain several Jordan blocks. The matrix $N_j$ is an $n_j\times n_j$ nilpotent matrix with ones or zeros along its superdiagonal, depending on the partial multiplicities of $\lambda_j$. According to the Table \ref{tab1}, we separate the distinct eigenvalues $\lambda_j$ and corresponding Jordan canonical form into the following four categories.

Case 1. $T$-symmetric/$T$-skew-symmetric ($\mathbb{K}=\mathbb{R}$) 
\begin{align}
&J_j=\begin{bmatrix}
\alpha_jI_{n_j}+N_j & \beta_jI_{n_j}\\
-\beta_jI_{n_j}& \alpha_jI_{n_j}+N_j\\
\end{bmatrix},\ \lambda_j\in\mathbb{C}/\mathbb{R},\ j=1,\ldots, r,\label{xxg-3}\\
&J_j=\alpha_jI_{n_j}+N_j,\ \alpha_j\in\mathbb{R},\ j=r+1,\ldots,s.\label{xxg-4}
\end{align}

Case 2. $T$-even/$T$-odd ($\mathbb{K}=\mathbb{R}$)
\begin{align}
&{J_j=\mbox{diag}\left(\begin{bmatrix}
\alpha_jI_{n_j}+N_j & \beta_jI_{n_j}\\
 -\beta_jI_{n_j} & \alpha_jI_{n_j}+N_j\\
 \end{bmatrix}, \begin{bmatrix}
-\alpha_jI_{n_j}+N_j & -\beta_jI_{n_j}\\
 \beta_jI_{n_j} & -\alpha_jI_{n_j}+N_j\\
 \end{bmatrix}\right),j=1,\ldots,r_1;}\label{szgs-6-1}\\
&J_j=\begin{bmatrix}
N_j & \beta_jI_{n_j}\\
-\beta_jI_{n_j} & N_j\\
\end{bmatrix}, j=r_1+1,\ldots, r_2,\label{szgs-6-2}\\
&J_j=\begin{bmatrix}
\alpha_jI_{n_j}+N_j & 0\\
0 & -\alpha_jI_{n_j}+N_j\\
\end{bmatrix}, j=r_2+1, \ldots,s-1;\label{szgs-6-3}\\
& J_{s}=N_{s}.\label{szgs-6-4}
\end{align}

Case 3. $H$-Hermitian/$H$-skew-Hermitian ($\mathbb{K}=\mathbb{C}$)
\begin{align}
&J_j=\mbox{diag}(\lambda_jI_{n_j}+N_j, \bar{\lambda}_jI_{n_j}+N_j), \lambda_j\in\mathbb{C}/\mathbb{R},\ j=1,\ldots,r;\label{szgs-7-1}\\
&J_{j}=\lambda_jI_{n_j}+N_j, \lambda_j\in\mathbb{R},\ j=r+1,\ldots,s.\label{szgs-7-2}
\end{align}

Case 4. $H$-even/$H$-odd ($\mathbb{K}=\mathbb{C}$)
\begin{align}
&J_j=\mbox{diag}(\lambda_jI_{n_j}+N_j, -\bar{\lambda}_jI_{n_j}+N_j), \lambda_j\in\mathbb{C}/\mathbb{{\rm i}R},\ j=1,\ldots,r;\label{szgs-8-1}\\
&J_{j}=\lambda_jI_{n_j}+N_j, \lambda_j\in\mathbb{{\rm i}R},\ j=r+1,\ldots,s,\label{szgs-8-2}
\end{align}
where ${\rm i}:=\sqrt{-1}$.

The structures of $\Gamma\in\mathcal{S}_{(J,\star,\epsilon_1,\epsilon_2)}$ for the $T$-symmetric  and $T$-even  have been given by \cite{Chu-MC-2009,Jia-siam-2011}. In this section, we consider the other six $(\star,\epsilon_1,\epsilon_2)$-structured matrix polynomials.  Let $(X,\ J)\in\mathbb{K}^{n\times kn}\times\mathbb{K}^{kn\times kn}$ be a standard pair of $P(\lambda)$, where 
\begin{align}
&X:=[X_1,X_2,\ldots,X_s],\label{gss-8-1}\\
&J:=\mbox{diag}(J_1,J_2,\ldots,J_s).\label{gss-8-2}
\end{align}

\begin{lemma}\label{lem-3}
Suppose that matrix pair $(X,\ J)$ defined by (\ref{gss-8-1}) and (\ref{gss-8-2}) is a standard pair of regular $P(\lambda)$ as in (\ref{gs-1}). Then $P(\lambda)$ has a spectral decomposition (\ref{gs-7}), in which the matrix $\Gamma\in\mathcal{S}_{(J,\star,\epsilon_1,\epsilon_2)}$ has the following structure
\begin{equation}\label{gss-10}
\Gamma=\mbox{diag}(\Gamma_{11}, \Gamma_{22}, \ldots, \Gamma_{ss}),
\end{equation}
where $\Gamma_{jj}\in\mathcal{S}_{(J_j,\star,\epsilon_1,\epsilon_2)}$, $j=1, \ldots, s$.
\end{lemma}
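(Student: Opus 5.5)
By Theorem \ref{thm-1}, since $(X,\ J)$ is a standard pair of the regular polynomial $P(\lambda)$, the spectral decomposition (\ref{gs-7}) holds. The matrix $\Gamma$ defined by (\ref{gs-4}) is nonsingular, lies in $\mathcal{S}_{(J,\star,\epsilon_1,\epsilon_2)}$, and satisfies (\ref{gs-6}). So the only thing left to prove is the block diagonal structure (\ref{gss-10}). Partition $\Gamma=[\Gamma_{ij}]_{i,j=1}^s$ conformally with $J=\mbox{diag}(J_1,\ldots,J_s)$. The relation $\Gamma J^{\star}=\epsilon J\Gamma$ then splits blockwise into the Sylvester equations
$$J_i\Gamma_{ij}-\epsilon\,\Gamma_{ij}J_j^{\star}=0,\qquad i,j=1,\ldots,s.$$
The relation $\Gamma^{\star}=\epsilon_2\Gamma$ gives $\Gamma_{ji}=\epsilon_2\Gamma_{ij}^{\star}$. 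Once we show $\Gamma_{ij}=0$ for $i\neq j$, the diagonal blocks automatically satisfy $\Gamma_{jj}^{\star}=\epsilon_2\Gamma_{jj}$ and $\Gamma_{jj}J_j^{\star}=\epsilon J_j\Gamma_{jj}$, that is, $\Gamma_{jj}\in\mathcal{S}_{(J_j,\star,\epsilon_1,\epsilon_2)}$.

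The vanishing of the off-diagonal blocks will come from the classical Sylvester uniqueness criterion: $AY-YB=0$ has only the trivial solution when $\sigma(A)\cap\sigma(B)=\emptyset$. Here $A=J_i$ and $B=\epsilon J_j^{\star}$, so it suffices to check
$$\sigma(J_i)\cap\sigma(\epsilon J_j^{\star})=\emptyset\quad\text{for } i\neq j.$$
For $\star=T$ we have $\sigma(\epsilon J_j^T)=\epsilon\sigma(J_j)$. For $\star=H$ we have $\sigma(\epsilon J_j^H)=\epsilon\overline{\sigma(J_j)}$. I would verify this case by case, using how the blocks were grouped in Cases 1--4.

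\textbf{Case 1} ($T$, $\epsilon=1$, $\mathbb{K}=\mathbb{R}$). Each $J_j$ has spectrum $\{\lambda_j,\bar\lambda_j\}$ or $\{\alpha_j\}$. Distinct indices correspond to distinct eigenvalue classes, so the spectra are disjoint.

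\textbf{Case 2} ($T$, $\epsilon=-1$). Each block $J_j$ collects a full orbit $\{\lambda,\bar\lambda,-\lambda,-\bar\lambda\}$ (or $\{\pm{\rm i}\beta_j\}$, $\{\pm\alpha_j\}$, $\{0\}$). Hence $-\sigma(J_j)=\sigma(J_j)$, and these orbits are disjoint for $i\ne j$.

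\textbf{Case 3} ($H$, $\epsilon=1$). Here $\overline{\sigma(J_j)}=\sigma(J_j)$, because the blocks pair $\lambda_j$ with $\bar\lambda_j$, or $\lambda_j$ is real.

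\textbf{Case 4} ($H$, $\epsilon=-1$). Here $-\overline{\sigma(J_j)}=\sigma(J_j)$, because the blocks pair $\lambda_j$ with $-\bar\lambda_j$, or $\lambda_j\in{\rm i}\mathbb{R}$.

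In every case $\sigma(\epsilon J_j^{\star})=\sigma(J_j)$, which is disjoint from $\sigma(J_i)$ for $i\neq j$. Therefore $\Gamma_{ij}=0$.

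The main obstacle is making precise that the indexing in (\ref{xxg-3})--(\ref{szgs-8-2}) really groups \emph{all} eigenvalues of a symmetry orbit into one $J_j$, with distinct $j$ giving distinct orbits. For instance, in Case 4 one must exclude $\lambda_i=-\bar\lambda_j$ with $i\neq j$. I would state this explicitly as the meaning of ``distinct eigenvalues'' in the categorization, justified by Table \ref{tab1}. I would also note that $\mathcal{S}_{(\cdot)}$ uses $S^{\star}$ with complex conjugation for $H$, which is exactly why the orbit is $\epsilon\overline{\sigma}$ there.
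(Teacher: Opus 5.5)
Your proposal is correct and follows the paper's proof. Both arguments partition $\Gamma$ conformally with $J$, turn $\Gamma J^{\star}=\epsilon J\Gamma$ into blockwise Sylvester equations, and kill the off-diagonal blocks because $\sigma(J_i)$ and $\sigma(\epsilon J_j^{\star})$ are disjoint; the paper phrases this through the nonsingularity of $(J_l^{\star})^T\otimes I-\epsilon I\otimes J_j$ acting on $\mathrm{vec}(\Gamma_{jl})$. Your case-by-case check that each $J_j$ carries a full symmetry orbit, so that $\sigma(\epsilon J_j^{\star})=\sigma(J_j)$, supplies the justification that the paper only asserts.
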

\begin{proof}
Since $(X,\ J)$ is the standard pair of $P(\lambda)$, it follows from Theorem \ref{thm-1} that there exists a nonsingular matrix $\Gamma\in\mathcal{S}_{(J,\star,\epsilon_1,\epsilon_2)}$ such that (\ref{gs-6}) and (\ref{gs-7}) hold. Then,
\begin{align}
&\Gamma^{\star}=\epsilon_2\Gamma,\label{gss-11-1}\\
&\Gamma J^{\star}=\epsilon J\Gamma.\label{gss-11-2}
\end{align}
Partition $\Gamma$ as $\Gamma:=(\Gamma_{jl})$ according to the matrix $J$ defined by (\ref{gss-8-2}). It follows from (\ref{gss-11-2}) that 
\begin{equation}\label{gss-13}
\Gamma_{jl}J^{\star}_l-\epsilon J_j\Gamma_{jl}=0, \ \ j, l=1, 2, \ldots, s,
\end{equation}
 which implies that
\begin{equation}\label{gss-12}
\left[(J^{\star}_l)^T\otimes I-\epsilon I\otimes J_j\right]\mbox{vec}(\Gamma_{jl})=0,
\end{equation}
where $\otimes $ is the Kronecker product and $\mbox{vec}$ the column vectorization of a matrix. When $j\neq l$, it is easy to verify that the matrix $(J^{\star}_l)^T\otimes I-\epsilon I\otimes J_j$ is nonsingular, since the eigenvalues of $J_l^{\star}$ and $\epsilon J_j$ are distinct. And then $\mbox{vec}(\Gamma_{jl})=0$, so $\Gamma_{jl}=0$ ($j\neq l$). From (\ref{gss-11-1}) and (\ref{gss-11-2}), we can obtain from (\ref{gs-5}) that $\Gamma_{jj}\in\mathcal{S}_{(J_j,\star,\epsilon)}$, $j=1, \ldots, s$.
\end{proof}


\subsection{Upper triangular Hankel/skew-Hankel blocks form}
An $m\times n$ matrix $H=(h_{ij})$ is said to have a Hankel structure if $h_{ij}=\eta_{i+j-1}$, where $\{\eta_1,\ldots,\eta_{m+n-1}\}$ are some fixed scalars. The matrix $H$ is said to be upper triangular Hankel if $\eta_t=0$ for all $t>\min\{m,n\}$. For the matrix $$D_m=\mbox{diag}(1,-1, \ldots, (-1)^{m-1}),$$ a matrix $H$ is called a skew-Hankel matrix if $D_mH$ is a Hankel matrix; $H$ is an upper triangular skew-Hankel matrix if $D_mH$ is an upper triangular Hankel matrix. Assume that there are $m_j$ Jordan blocks corresponding to the eigenvalues $\lambda_j$. Then the nilpotent matrix $N_j$ has the following form
$$N_j=\mbox{diag}(N_1^{(j)}, N_2^{(j)}, \ldots, N_{m_j}^{(j)}),$$
where $N_t^{(j)}$ is the nilpotent block of size $n_t^{(j)}$, $t=1,\ldots, m_j$. By straightforward calculation, we can obtain the following results, which can also be found in \cite{Chu-MC-2009,Jia-siam-2011}.
\begin{lemma}\cite{Chu-MC-2009,Jia-siam-2011}\label{lem-4}
{\rm (1)} Any solution $Z$ to the equation 
\begin{equation}\label{gss-14-1}
ZN_j^T-N_jZ=0,
\end{equation}
is necessarily upper triangular Hankel blocks of form
\begin{equation}\label{gss-14-3}
Z=[Z_{it}]_{m_j\times m_j},\ \ Z_{it}\in\mathbb{R}^{n_i^{(j)}\times n_t^{(j)}},
\end{equation}
where $Z_{it}$ are upper triangular Hankel for $i,t=1,\ldots, m_j$. If $Z$ is $T$-symmetric, then $Z_{it}=Z_{it}^T$, where $Z_{ii}$ is a $T$-symmetric upper triangular Hankel matrix. If $Z$ is $T$-skew-symmetric, then  $Z_{it}=-Z_{it}^T$, where $Z_{ii}$ is a $T$-skew-symmetric upper triangular Hankel matrix.

{\rm (2)} Any  solution $Z$ to the euqation
\begin{equation}\label{gss-14-2}
ZN_j^T+N_jZ=0,
\end{equation}
is necessarily upper triangular skew-Hankel blocks form as in (\ref{gss-14-3}), where $Z_{it}$ are upper triangular skew-Hankel for $i,t=1,\ldots,m_j$. If $Z$ is $T$-symmetric, then $Z_{it}=Z_{it}^T$, where $Z_{ii}$ is a $T$-symmetric upper triangular skew-Hankel matrix (whose $(i,t)$-element is zero if $i+t$ is odd). If $Z$ is $T$-skew-symmetric, then  $Z_{it}=-Z_{it}^T$, where $Z_{ii}$ is a $T$-skew-symmetric upper triangular skew-Hankel matrix (whose $(i,t)$-element is zero if $i+t$ is even).
\end{lemma}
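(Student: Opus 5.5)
The argument is a direct entrywise computation that exploits the shift action of the nilpotent blocks, applied block by block. First partition $Z=[Z_{it}]$ conformally with $N_j=\mbox{diag}(N_1^{(j)},\ldots,N_{m_j}^{(j)})$. Since $N_j$ is block diagonal, equation (\ref{gss-14-1}) decouples into the independent block equations
\[
Z_{it}(N_t^{(j)})^T=N_i^{(j)}Z_{it},\qquad i,t=1,\ldots,m_j .
\]
Similarly, (\ref{gss-14-2}) decouples into $Z_{it}(N_t^{(j)})^T=-N_i^{(j)}Z_{it}$. Hence it suffices to solve the single-block equation $YN_q^T=\pm N_pY$ for $Y\in\mathbb{R}^{p\times q}$, where $N_p$ denotes the $p\times p$ nilpotent Jordan block.

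For the single block, write $Y=(y_{ab})$ and compare entries. The $(a,b)$ entry of $N_pY$ is $y_{a+1,b}$, taken as $0$ when $a=p$. The $(a,b)$ entry of $YN_q^T$ is $y_{a,b-1}$, taken as $0$ when $b=1$. In the ``$-$'' case, the equation $YN_q^T=N_pY$ therefore reads $y_{a,b-1}=y_{a+1,b}$. This says that $y_{ab}$ depends only on $a+b$, so $Y$ is Hankel. Using the boundary conditions $y_{p,b-1}=0$ (from $a=p$, $2\le b\le q$) and $y_{a+1,1}=0$ (from $b=1$, $1\le a\le p-1$), the constant values on all antidiagonals with index $a+b-1>\min\{p,q\}$ vanish. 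I will check this by tracking the last row and the first column: the zeros there propagate along the antidiagonals that meet them, which are exactly the ones with index larger than $\min\{p,q\}$. So $Y$ is upper triangular Hankel. In the ``$+$'' case, the relation becomes $y_{a,b-1}=-y_{a+1,b}$. Substituting $y_{ab}=(-1)^{a-1}w_{ab}$, i.e.\ $W=D_pY$, turns it back into the Hankel relation with the same boundary zeros. Hence $D_pY$ is upper triangular Hankel, i.e.\ $Y$ is upper triangular skew-Hankel.

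The symmetry statements come next. If $Z^T=\pm Z$, then $Z_{ti}=\pm Z_{it}^T$, which is how I will read the claimed relation between off-diagonal blocks. For a diagonal block, $Z_{ii}$ is square and Hankel, hence automatically symmetric. In the Hankel case, skew-symmetry therefore forces $Z_{ii}=0$; I will record that this is the content of the ``$T$-skew-symmetric upper triangular Hankel'' clause. For skew-Hankel square blocks, $y_{ab}=(-1)^{a-1}\eta_{a+b-1}$, so $y_{ba}=(-1)^{a+b}y_{ab}$. Imposing $y_{ba}=y_{ab}$ kills the entries with $a+b$ odd, and imposing $y_{ba}=-y_{ab}$ kills the entries with $a+b$ even. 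This gives the parity statements in (2).

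I expect the main obstacle to be the boundary bookkeeping. One has to show that the zero conditions coming from the last row and the first column wipe out exactly the antidiagonals beyond $\min\{p,q\}$ in the rectangular case $p\neq q$, and to track the sign bookkeeping with $D_p$ versus $D_q$ in the skew case. I will handle this by treating $p\le q$ and $p>q$ separately and following each antidiagonal to the edge of the matrix where it first meets a forced zero.
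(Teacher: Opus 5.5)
\textbf{Verdict: the approach is right, but the central entrywise computation is wrong, so the argument as written does not go through. The fix is one line.}

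Your strategy is the ``straightforward calculation'' the paper alludes to; it gives no proof of its own, only citing Chu and Xu and Jia and Wei. You decouple into the block equations $Z_{it}(N_t^{(j)})^T=\pm N_i^{(j)}Z_{it}$, compare entries, pass to $D_pY$ in the skew case, and read off parity conditions on square blocks. Your treatment of the symmetric and skew-symmetric diagonal blocks is correct. That includes $Z_{ii}=0$ in the Hankel skew case and the parity relation $y_{ba}=(-1)^{a+b}y_{ab}$.

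The error is this. $N_q^T$ has its ones on the \emph{sub}diagonal, so right multiplication by it shifts columns to the left: $(YN_q^T)_{ab}=y_{a,b+1}$ for $b<q$, and $(YN_q^T)_{aq}=0$. Your formula $y_{a,b-1}$, zero for $b=1$, is the entry of $YN_q$. This matters in two ways:
\begin{itemize}
\item Your relation $y_{a,b-1}=y_{a+1,b}$ links $(a,b-1)$ to $(a+1,b)$, so it says $y_{ab}$ depends only on $b-a$ (Toeplitz). ``Depends only on $a+b$'' does not follow from it.
\item Your boundary zeros in the first column, $y_{c,1}=0$ for $2\le c\le p$, lie on the antidiagonals of index $2,\ldots,p$. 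Propagating them would kill the \emph{small}-index antidiagonals, the opposite of what you want.
\end{itemize}
For $p=q=2$, your equations give $y_{21}=0$ and $y_{11}=y_{22}$, which is upper triangular Toeplitz. The same slip carries over to the ``$+$'' case.

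\textbf{Repair.} The correct relations are:
\begin{itemize}
\item $y_{a,b+1}=y_{a+1,b}$ for $a<p$, $b<q$, which genuinely says the entries are constant along antidiagonals;
\item $y_{p,c}=0$ for $2\le c\le q$ (from $a=p$);
\item $y_{c,q}=0$ for $2\le c\le p$ (from $b=q$).
\end{itemize}
So the last row and last column vanish except for their first entries.

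Now take the antidiagonal of index $\ell=a+b-1$.
\begin{itemize}
\item It meets the last row at column $\ell-p+1$, which lies in $\{2,\ldots,q\}$ exactly when $\ell\ge p+1$.
\item It meets the last column at row $\ell-q+1$, which lies in $\{2,\ldots,p\}$ exactly when $\ell\ge q+1$.
\end{itemize}
Hence every antidiagonal with $\ell>\min\{p,q\}$ vanishes. Those with $\ell\le\min\{p,q\}$ meet no forced zero and are free. No separate treatment of $p\le q$ and $p>q$ is needed.

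In the ``$+$'' case the corrected relation is $y_{a,b+1}=-y_{a+1,b}$, with the same boundary zeros. Substituting $y_{ab}=(-1)^{a-1}w_{ab}$ turns it into $w_{a,b+1}=w_{a+1,b}$. So $D_pY$ is upper triangular Hankel, which matches the paper's definition of skew-Hankel via $D_m$ with $m$ the number of rows. With this correction, the rest of your argument goes through unchanged.
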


\begin{theorem}\label{thm-4}
Suppose that the matrix pair $(X,\ J)$ defined by (\ref{gss-8-1}) and (\ref{gss-8-2}) is a standard pair of regular $P(\lambda)$ as in (\ref{gs-1}). Then $P(\lambda)$ has a spectral decomposition (\ref{gs-7}), in which the matrix $$\Gamma=\mbox{diag}(\Gamma_{11}, \Gamma_{22}, \ldots, \Gamma_{ss})\in\mathcal{S}_{(J,\star,\epsilon_1,\epsilon_2)},$$ has the following structures:

(1) $T$-skew-symmetric:
\begin{align}
&\Gamma_{jj}=\begin{bmatrix}
U_{jj} & W_{jj}\\
W_{jj} & -U_{jj}\\
\end{bmatrix},\ \  j=1,2,\ldots,r,\label{xxg-2}\\
&\Gamma_{jj}=U_{jj},\ \  j=r+1,\ldots,s,\label{xxg-2-1}
\end{align}
where $U_{jj}, W_{jj}\in\mathbb{R}^{n_j\times n_j}$ are of $T$-skew-symmetric upper triangular Hankel block form.

(2) $T$-odd:
\begin{align}
& \Gamma_{jj}=\begin{bmatrix}
0 & \tilde{U}_j\\
\tilde{U}_j^T &0\\
\end{bmatrix}, \tilde{U}_j=\begin{bmatrix}
\tilde{U}_{j1} & \tilde{U}_{j2}\\
\tilde{U}_{j2} & -\tilde{U}_{j1}\\
\end{bmatrix}\in\mathbb{R}^{2n_j\times 2n_j}, j=1,\ldots, r_1\label{xzgs-5-1}\\
& \Gamma_{jj}=\begin{bmatrix}
\tilde{W}_{j1} & \tilde{W}_{j2}\\
-\tilde{W}_{j2} & \tilde{W}_{j1}\\
\end{bmatrix}\in\mathbb{R}^{2n_j\times 2n_j}, j=r_1+1,\ldots,r_2,\label{xzgs-5-2}\\
& \Gamma_{jj}=\begin{bmatrix}
0 & \tilde{V}_{j2}\\
\tilde{V}_{j2}^T & 0\\
\end{bmatrix}\in\mathbb{R}^{2n_j\times 2n_j}, j=r_2+1, \ldots, s-1,\label{xzgs-5-3}
\end{align}
where $\tilde{U}_{j1}$, $\tilde{U}_{j2}$, $\tilde{V}_{j2}$ are of upper triangular skew-Hankel blocks form,  and $\tilde{W}_{j1}$, $\Gamma_{ss}$ are of $T$-symmetric upper triangular skew-Hankel blocks form and $\tilde{W}_{j2}$ is of $T$-skew-symmetric upper triangular skew-Hankel blocks form. 

(3) $\star=H$ and $\epsilon_1=\epsilon_2$:
\begin{align*}
&\Gamma_{jj}=\begin{bmatrix}
0 & W_{j}\\
\rho W_{j}^H & 0\\
\end{bmatrix},\ \ j=1,2, \ldots, r, \\
&\Gamma_{jj}= W_{j}, \ \ j=r+1,\ldots,s,
\end{align*}
where $W_{j}\in\mathbb{C}^{n_j\times n_j}$$(j=1,\ldots,r)$ are of upper triangular Hankel block form. If $P(\lambda)$ is $H$-Hermitian, then $\rho=1$ and $W_{j}\in\mathbb{C}^{n_j\times n_j}$($j=r+1,\ldots, s$) are of  $H$-Hermitian upper triangular Hankel blocks form. If $P(\lambda)$ is $H$-skew-Hermitian, then $\rho=-1$ and the matrices $W_{j}\in\mathbb{C}^{n_j\times n_j}$($j=r+1,\ldots, s$) are of $H$-skew-Hermitian upper triangular Hankel blocks form.

(4) $\star=H$ and $\epsilon_1\neq\epsilon_2$:
\begin{align*}
&\Gamma_{jj}=\begin{bmatrix}
0 & V_{j}\\
\rho V_{j}^H & 0\\
\end{bmatrix},\ \ j=1,\ldots,r,\\
&\Gamma_{jj}=V_{j},\ \ j=r+1,\ldots,s,
\end{align*}
where $V_{j}\in\mathbb{C}^{n_j\times n_j}$($j=1,\ldots,r$) are of upper triangular skew-Hankel blocks form. If $P(\lambda)$ is $H$-odd, then $\rho=1$ and  $V_{j}\in\mathbb{C}^{n_j\times n_j}$($j=r+1,\ldots,s$) are of $H$-Hermitian upper triangular skew-Hankel blocks form. If $P(\lambda)$ is $H$-even, then $\rho=-1$ and $V_{j}\in\mathbb{C}^{n_j\times n_j}$($j=r+1,\ldots,s$) are of  $H$-skew-Hermitian upper triangular skew-Hankel blocks form.
\end{theorem}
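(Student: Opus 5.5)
By Lemma \ref{lem-3}, $\Gamma$ is already block diagonal with $\Gamma_{jj}\in\mathcal{S}_{(J_j,\star,\epsilon_1,\epsilon_2)}$. So the plan is to solve, block by block, the two defining conditions
$$\Gamma_{jj}J_j^{\star}=\epsilon J_j\Gamma_{jj},\qquad \Gamma_{jj}^{\star}=\epsilon_2\Gamma_{jj}.$$
In each case we partition $\Gamma_{jj}$ conformally with the block structure of $J_j$ in Cases 1--4 of Section 3. The commutation equation then splits into the scalar-shifted pieces $\lambda_j$, $\alpha_j$, $\beta_j$ and the nilpotent piece $N_j$. Each subblock will satisfy an equation of the form
$$Z(\mu I+N_j)^{\star}=\epsilon(\nu I+N_j)Z.$$
The first step is the standard dichotomy for such an equation, via the Kronecker argument of Lemma \ref{lem-3}:
\begin{itemize}
\item if $\bar\mu\neq\epsilon\nu$ (or $\mu\neq\epsilon\nu$ when $\star=T$), then $Z=0$;
\item otherwise the scalar parts cancel, and $Z$ solves $ZN_j^T=N_jZ$ when $\epsilon=1$, or $ZN_j^T=-N_jZ$ when $\epsilon=-1$.
\end{itemize}
Lemma \ref{lem-4} then says $Z$ has upper triangular Hankel block form when $\epsilon=1$ and skew-Hankel block form when $\epsilon=-1$. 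Note that $\epsilon=1$ exactly in the symmetric/skew-symmetric/Hermitian/skew-Hermitian cases, which explains the Hankel versus skew-Hankel split in parts (1),(3) versus (2),(4).

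\textbf{Case (3)} ($\star=H$, $\epsilon=1$). For $J_j=\mathrm{diag}(\lambda_jI+N_j,\bar\lambda_jI+N_j)$ with $\lambda_j\notin\mathbb{R}$, the diagonal subblocks satisfy $Z(\bar\lambda_j I+N_j^T)=(\lambda_j I+N_j)Z$. Since $\bar\lambda_j\neq\lambda_j$, they vanish. The off-diagonal blocks have matching eigenvalues, so they are upper triangular Hankel. The condition $\Gamma^H=\epsilon_2\Gamma$ then forces the $(2,1)$ block to equal $\rho W_j^H$ with $\rho=\epsilon_2$. For real $\lambda_j$ we get a single Hankel-block $W_j$ that is Hermitian or skew-Hermitian.

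\textbf{Case (4)} ($\star=H$, $\epsilon=-1$). The argument is identical, with $-\bar\lambda_j$ in place of $\bar\lambda_j$ and skew-Hankel in place of Hankel; here $\rho=\epsilon_2$, which is $1$ for $H$-odd and $-1$ for $H$-even.

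\textbf{Real cases (1) and (2).} Here the $2\times2$ real blocks couple through $\beta_j$, so I would not diagonalize. Instead I write $\Gamma_{jj}=[G_{ab}]_{a,b=1,2}$ and expand $\Gamma_{jj}J_j^T=\epsilon J_j\Gamma_{jj}$ into four equations. The $\alpha_j$ terms cancel when $\epsilon=1$. Subtracting and adding suitable combinations should give $G_{11}=-G_{22}$ and $G_{12}=G_{21}$, together with $G_{11},G_{12}$ satisfying $ZN_j^T=N_jZ$. Concretely, the $\beta_j$-terms yield $\beta_j(G_{12}-G_{21})=\dots$ and $\beta_j(G_{11}+G_{22})=\dots$; comparing these with the nilpotent terms and inducting on the nilpotency index (starting from the last row or column, where $N_j$ acts trivially) should force them to vanish. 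Skew-symmetry of $\Gamma_{jj}$ then makes $U_{jj}$ and $W_{jj}$ skew-symmetric upper triangular Hankel, which gives (\ref{xxg-2}).

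For the $T$-odd blocks (\ref{szgs-6-1}) with $\alpha_j\neq0$, the eigenvalue pairing kills the diagonal $2n_j\times2n_j$ blocks. The off-diagonal block $\tilde U_j$ then satisfies a $\beta$-coupled equation with $\epsilon=-1$, and the same manipulation gives the $[\tilde U_{j1},\tilde U_{j2};\tilde U_{j2},-\tilde U_{j1}]$ pattern with skew-Hankel blocks. Symmetry $\Gamma^T=\Gamma$ links the two off-diagonal blocks. The blocks (\ref{szgs-6-2}), (\ref{szgs-6-3}) and (\ref{szgs-6-4}) are handled analogously, producing (\ref{xzgs-5-2}), (\ref{xzgs-5-3}) and $\Gamma_{ss}$ symmetric skew-Hankel.

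\textbf{Main obstacle.} I expect the hardest step to be the real $\beta_j$-coupled blocks. There the dichotomy argument does not apply directly, because the eigenvalues are shared, and showing that the symmetric/skew combinations vanish requires the careful induction on Jordan structure described above. The fallback is to complexify: conjugate $J_j$ by $\frac{1}{\sqrt2}\begin{bmatrix}I&{\rm i}I\\ {\rm i}I&I\end{bmatrix}$ to obtain $\mathrm{diag}(\lambda_jI+N_j,\bar\lambda_jI+N_j)$, apply the complex result from Case (3) or (4), and transform back. Reality of $\Gamma$ then forces exactly the $U,W$ (respectively $\tilde W_{j1},\tilde W_{j2}$) pattern.
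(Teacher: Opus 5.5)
Your proof is correct, but it is organized differently from the paper's.

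\textbf{What the paper does.} The paper writes out only the $H$-Hermitian case and refers the $T$-skew-symmetric and $T$-odd cases to Chu--Xu and Jia--Wei. In the $H$-Hermitian case it splits $\Gamma_{jj}=\Gamma_R+{\rm i}\Gamma_I$. The first-order complex equation $UN_j^T-N_jU=2{\rm i}\beta_jU$ for a diagonal block then becomes the real pair (\ref{gss-16-1})--(\ref{gss-16-2}). Eliminating gives (\ref{gss-18}), i.e. $(\mathcal{L}^2+4\beta_j^2)U_I=0$ with $\mathcal{L}(Z)=ZN_j^T-N_jZ$.

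\textbf{The $H$-cases.} You apply spectral disjointness of the complex Sylvester equation directly. This is shorter and gives $\rho=\epsilon_2$ at once. The paper's split buys two things: Lemma \ref{lem-4}, which is stated for real matrices, applies verbatim, and the Hermitian/skew-Hermitian structure is read off from $\Gamma_R^T=\Gamma_R$, $\Gamma_I^T=-\Gamma_I$. In your version, just note that $N_j$ is real, so $\mathcal{L}(Z)=0$ iff $\mathcal{L}(\mathrm{Re}\,Z)=\mathcal{L}(\mathrm{Im}\,Z)=0$.

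\textbf{The real $\beta_j$-coupled blocks.} The paper never treats these. Your sum/difference idea is exactly the paper's mechanism moved to this setting, and it replaces your entrywise induction cleanly. For (\ref{xxg-3}), set $S=G_{11}+G_{22}$ and $D=G_{12}-G_{21}$. Then $\mathcal{L}(S)=-2\beta_jD$ and $\mathcal{L}(D)=2\beta_jS$, so $(\mathcal{L}^2+4\beta_j^2)S=0$. Since $\mathcal{L}$ is nilpotent, $S=D=0$. The $T$-odd blocks work the same way with $\mathcal{M}(Z)=ZN_j^T+N_jZ$:
\begin{itemize}
\item for the off-diagonal block in (\ref{szgs-6-1}), use $U_{11}+U_{22}$ and $U_{12}-U_{21}$;
\item for (\ref{szgs-6-2}), use $G_{11}-G_{22}$ and $G_{12}+G_{21}$.
\end{itemize}

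\textbf{The complexification fallback.} To invoke Cases (3)/(4), the congruence must be $\hat\Gamma=T^{-1}\Gamma_{jj}T^{-H}$. This is legitimate because $J_j$ and $\Gamma_{jj}$ are real, so $J_j^T=J_j^H$ and $\Gamma_{jj}^T=\Gamma_{jj}^H=\epsilon_2\Gamma_{jj}$. With $T^{-1}\Gamma_{jj}T^{-T}$ you would keep a transpose condition on $\hat\Gamma$. Its surviving blocks couple $\lambda_j$ with $\epsilon\lambda_j$ rather than $\epsilon\bar\lambda_j$, so the Case (3)/(4) pattern would not apply.
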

\begin{proof}
The proof of $T$-skew-symmetric and $T$-odd are similar to the $T$-symmetric and $T$-even quadratic matrix polynomials given by Chu \cite{Chu-MC-2009} and Jia \cite{Jia-siam-2011}, respectively. We only give the proof the case of $H$-Hermitian, i.e., $\star=H$, $\epsilon_1=\epsilon_2=1$. The other cases can be proved similarly. Let $\lambda_j=\alpha_j+{\rm i}\beta_j$ and $\Gamma_{jj}=\Gamma_R+{\rm i}\Gamma_I$, where $\alpha_j,\beta_j\in\mathbb{R}$. Note that $\Gamma_{jj}^{H}=\Gamma_{jj}$, we must have $\Gamma_R^T=\Gamma_R$ and $\Gamma_I^T=-\Gamma_I$. If $\beta_j=0$, then we can see from $\Gamma_{jj}J_j^{H}-J_j\Gamma_{jj}=0$ that 
\begin{align*}
&\Gamma_RN_j^T-N_j\Gamma_R=0,\\
&\Gamma_IN_j^T-N_j\Gamma_I=0.
\end{align*}
By Lemma \ref{lem-4}, we can obtain that the matrices $\Gamma_R, \Gamma_I$ are of $T$-symmetric and $T$-skew-symmetric upper triangular Hankel blocks forms, respectively.
If $\beta_j\neq 0$, we can assume that 
\begin{equation}\label{gss-15}
J_j=\begin{bmatrix}
\alpha_jI_{n_j}+N_j+{\rm i}\beta_jI_{n_j} & 0\\
0&  \alpha_jI_{n_j}+N_j-{\rm i}\beta_jI_{n_j}\\
\end{bmatrix}\in\mathbb{C}^{2n_j\times 2n_j}.
\end{equation}
For simplicity, denote the blocks of $\Gamma_{R}$ and $\Gamma_I$ of form 
\begin{equation}\label{gss-16-0}
\Gamma_{R}=\begin{bmatrix}
U_R & W_R\\
W_R^T & V_R\\
\end{bmatrix},\ \ \Gamma_I=\begin{bmatrix}
U_I & W_I\\
-W_I^T & V_I\\
\end{bmatrix},
\end{equation}
where $U_R, V_R\in\mathbb{R}^{n_j\times n_j}$ are $T$-symmetric and $U_I, V_I\in\mathbb{R}^{n_j\times n_j}$ are $T$-skew-symmetric. Comparing the corresponding blocks in $\Gamma_{jj}J_j^{H}-J_j\Gamma_{jj}=0$, we can obtain that
\begin{align}
&U_RN_j^T-N_jU_R+2\beta_jU_I=0,\label{gss-16-1}\\
&U_IN_j^T-N_jU_I-2\beta_jU_R=0,\label{gss-16-2}\\
&V_RN_j^T-NV_R-2\beta_jV_I=0,\label{gss-16-5}\\
&V_IN_j^T-NV_I+2\beta_jV_R=0,\label{gss-16-6}\\
&W_RN_j^T-N_jW_R=0,\label{gss-16-3}\\
&W_IN_j^T-N_jW_I=0,\label{gss-16-4}
\end{align}
Thus, we can obtain from (\ref{gss-16-2}) that 
\begin{equation}\label{gss-17}
2\beta_jU_R=U_IN_j^T-N_jU_I.
\end{equation}
Substituting (\ref{gss-17}) into (\ref{gss-16-1}) leads to 
$$U_I(N_j^T)^2-2N_jU_IN_j^T+N_j^2U_I+4\beta^2_jU_I=0,$$
which implies that
\begin{equation}\label{gss-18}
\left(N_j^2\otimes I_{n_j}-2N_j\otimes N_j+I_{n_j}\otimes N_j^2+4\beta_j^2I_{n_j}\otimes I_{n_j}\right)\mbox{vec}(U_I)=0.
\end{equation}
Since $\beta_j\neq 0$, it is easy to see that the matrix 
$$N_j^2\otimes I_{n_j}-2N_j\otimes N_j+I_{n_j}\otimes N_j^2+4\beta_j^2I_{n_j}\otimes I_{n_j}$$
is nonsingular. It follows from (\ref{gss-18}) that $\mbox{vec}(U_I)=0$, i.e, $U_I=0$. Similarly, we can prove that $U_R=0$, $V_R=0$ and $V_I=0$. By Lemma \ref{lem-4}, we can see from (\ref{gss-16-3}) and (\ref{gss-16-4}) that the matrices $W_R, W_I$ are of upper triangular Hankel blocks form. It follows from (\ref{gss-16-0}) that $\Gamma_{jj}=\Gamma_R+{\rm i}\Gamma_I$ is of $H$-Hermitian upper triangular Hankel blocks from. This completes the proof.
\end{proof}

\subsection{The semi-simple structure}

It is well known that an eigenvalue $\lambda_j$ is semi-simple if its algebraic multiplicity $n_j$ equal to its geometric multiplicity $m_j$. When all eigenvalues of $P(\lambda)$ are semi-simple, the upper triangular Hankel structures in Theorem \ref{thm-4} no longer show up. 

In order to simplify the structure of $\mathcal{S}_{(J,\star,\epsilon_1, \epsilon_2)}$ for the $T$-skew-symmetric and $T$-odd, we recall two definitions \cite{ARV-1992-siam}. Let $\Lambda:=\bigl[\begin{smallmatrix}
0 & I_n\\
-I_n & 0\\
\end{smallmatrix}\bigr]$. A matrix $Q\in\mathbb{R}^{2n\times 2n}$ is called orthogonal symplectic if and only if $Q\Lambda Q^T=\Lambda$ and $Q^TQ=I_{2n}$. Any orthogonal-symplectic matrix $Q$ can be written as $Q=\bigl[\begin{smallmatrix}
Q_1 & Q_2\\
-Q_2 & Q_1\\
\end{smallmatrix}\bigr]$, where $Q_1,Q_2\in\mathbb{R}^{n\times n}$. A matrix $H\in\mathbb{R}^{2n\times 2n}$ is said to be skew-Hamiltonian if $H\Lambda=-(H\Lambda)^T$.

\begin{lemma}\label{lem-5}\cite{ARV-1992-siam}
Let $H\in\mathbb{R}^{2m\times 2m}$ be a skew-Hamiltonian matrix. Then there is an orthogonal-symplectic matrix $\tilde{Q}_1\in\mathbb{R}^{2m\times 2m}$ such that  
$$\tilde{Q}_1^TH\tilde{Q}_1=\begin{bmatrix}
H_1 & H_2\\
0 & H_1^T\\
\end{bmatrix},$$
where $H_1,H_2\in\mathbb{R}^{m\times m}$, $H_2^T=-H_2$ and $H_1$ is upper quasi-triangular.
\end{lemma}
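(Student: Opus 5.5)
The proof will mimic the real Schur decomposition argument, adapted to preserve the symplectic structure. We will argue by induction on $m$. The first step is to translate skew-Hamiltonicity into block form. The condition $H\Lambda=-(H\Lambda)^T$ means that $H\Lambda$ is skew-symmetric. Writing $H=\bigl[\begin{smallmatrix} A & B\\ C & D\end{smallmatrix}\bigr]$, this gives $D=A^T$, $B^T=-B$ and $C^T=-C$. Equivalently, $\Lambda^T H^T\Lambda=H$, i.e.\ $H^T=\Lambda H\Lambda^T$. Hence $H$ and $H^T$ are similar, and every eigenvalue of $H$ appears with even multiplicity.

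We will also record that orthogonal-symplectic similarity preserves the skew-Hamiltonian property. If $Q$ is orthogonal-symplectic, then $Q^TQ=I$ and $Q\Lambda Q^T=\Lambda$ give $Q^T\Lambda Q=\Lambda$. Therefore
$$(Q^THQ)\Lambda=Q^TH\Lambda Q,$$
which is skew-symmetric. It follows that once the $(2,1)$ block of the transformed matrix vanishes, the $(2,2)$ block is automatically $H_1^T$ and the $(1,2)$ block is skew-symmetric. So the whole task reduces to finding an orthogonal-symplectic $\tilde Q_1$ that annihilates the lower-left block while keeping $H_1$ upper quasi-triangular.

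The key step is to find a real isotropic invariant subspace of $H$ of dimension $1$ or $2$, isotropic meaning $v^T\Lambda w=0$ for all $v,w$ in it. For a real eigenvector $v$, isotropy is automatic because $v^T\Lambda v=0$. For a complex pair $\lambda,\bar\lambda$ we need a real two-dimensional invariant subspace $\mathrm{span}\{x,y\}$ with $x^T\Lambda y=0$. To obtain it, we will use that $H^2$ and, more generally, $(H-\alpha I)^2+\beta^2 I$ is again skew-Hamiltonian, together with the even-multiplicity structure: the real generalized eigenspace for $\lambda$ has even dimension, and the restriction of the form $\Lambda$ to it is nondegenerate. Inside it we pick an invariant subspace of $H$ and adjust it to be isotropic.

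The isotropic basis is then orthonormalized by Gram–Schmidt, which stays within the isotropic span. Next we complete it to an orthogonal-symplectic matrix. If $\{u_1,\dots,u_p\}$ is the orthonormal isotropic basis, then $\{u_1,\dots,u_p,\Lambda^Tu_1,\dots,\Lambda^Tu_p\}$ is orthonormal. Extending it to a full symplectic orthonormal basis and arranging the columns as $[U\ Z\ \Lambda^TU\ \Lambda^TZ]$ gives a matrix of the form $\bigl[\begin{smallmatrix}Q_1 & Q_2\\ -Q_2 & Q_1\end{smallmatrix}\bigr]$.

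Conjugating $H$ by this matrix puts the $p\times p$ block of $H|_{\mathrm{span}\,U}$ in the top-left corner, with zeros below it in both the first and third block columns. By the skew-Hamiltonian symmetry of the $(2,1)$ block, the remaining $2(m-p)\times 2(m-p)$ compression onto the $Z$-coordinates is again skew-Hamiltonian. Applying the induction hypothesis to that compression and embedding the resulting orthogonal-symplectic matrix in the obvious way completes the induction.

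I expect the main obstacle to be producing a real two-dimensional isotropic invariant subspace for a complex eigenvalue pair. The fallback is to use a Krylov-type argument that avoids the direct construction. For a generic vector $v$, the Krylov vectors $v,Hv,\dots$ satisfy $v^T\Lambda H^j v=0$ for all $j$, because $\Lambda H^j$ is skew-symmetric for skew-Hamiltonian $H$. Krylov subspaces of $H$ are therefore isotropic. Hence one can reduce $H$ by orthogonal-symplectic transformations to a "Paige–Van Loan" form in which the $(2,1)$ block is zero and the $(1,1)$ block is upper Hessenberg. Finishing with an ordinary real Schur decomposition $H_1=W R W^T$, applied through $\mathrm{diag}(W,W)$, which is orthogonal-symplectic, makes $H_1$ upper quasi-triangular. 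This is the route I would actually commit to, since it avoids the delicate eigenvalue-by-eigenvalue isotropy argument.
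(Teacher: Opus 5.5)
The paper gives no proof of this lemma; it quotes it from \cite{ARV-1992-siam}, where it is Van Loan's skew-Hamiltonian Schur form. The standard proof there is exactly your committed route: reduce to Paige--Van Loan form by an orthogonal-symplectic similarity, then apply a real Schur decomposition to the $(1,1)$ block through $\mathrm{diag}(W,W)$. Your preliminaries and that final step are correct. This includes the isotropy of every Krylov space, since $(H^iv)^T\Lambda H^jv=v^T\Lambda H^{i+j}v=0$.

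The gap is the ``Hence'' that derives the PVL form from one generic Krylov sequence. This works only if some Krylov space reaches dimension $m$, i.e.\ only if the minimal polynomial of $H$ has degree $m$. In that case $\mathcal{K}_m(H,v)$ is a Lagrangian invariant subspace and $[U,\ \Lambda^TU]$ does the job. It fails whenever the degree is smaller: for $H=I_{2m}$ with $m\ge 2$, every Krylov space is one-dimensional, so no $v$, generic or not, supplies the $m$ columns you need. You must either cite the Householder--Givens PVL algorithm, which works column by column and never breaks down, or deflate. Deflating means: when the Krylov space stalls at dimension $p<m$, it is an isotropic invariant subspace, and you recurse on the skew-Hamiltonian compression to the $[Z,\ \Lambda^TZ]$ coordinates.

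That deflation is precisely your first route, and the ``main obstacle'' you flag there does not exist. For a complex pair $\alpha\pm{\rm i}\beta$, take any real $x\neq 0$ in $\ker\bigl((H-\alpha I)^2+\beta^2I\bigr)$. Then $\mathrm{span}\{x,Hx\}$ is a real invariant plane, and it is isotropic simply because $x^T\Lambda Hx=0$, since $\Lambda H$ is skew-symmetric. You need no generalized eigenspaces, no nondegeneracy argument and no ``adjustment''. With this, your induction is a complete, self-contained proof, and it yields the quasi-triangular $H_1$ directly, with no final Schur step.

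Two small corrections. First, after the first conjugation the zeros sit in the first block column below $R$ and in the third block \emph{row} beside $R^T$. The reason is that the $(2,1)$ block has zero first block column and is skew-symmetric, so its first block row vanishes too. Below $R^T$, the third block column is in general nonzero. Second, drop ``$H$ and $H^T$ are similar, hence even multiplicities'', since every matrix is similar to its transpose. The even-multiplicity fact is true, but it is a consequence of the lemma and you never need it.
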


\begin{lemma}\label{lem-6}
Let $\Gamma_{jj}$ ($j=1,\ldots,r$) be given by (\ref{xxg-2}). Then $n_j$ is even and there exist orthogonal-symplectic matrices $Q_j\in\mathbb{R}^{2n_j\times 2n_j}$ such that 
\begin{equation}\label{xxg-7}Q_j^T\Gamma_{jj}Q=\mbox{diag}\left(\begin{bmatrix}
0 & D_{j}\\
-D_{j} & 0\\
\end{bmatrix}, \begin{bmatrix}
0 & -D_{j}\\
D_{j} & 0\\
\end{bmatrix}\right),
\end{equation}
where $D_{j}=\mbox{diag}(\sigma^{(j)}_1,\ldots,\sigma^{(j)}_{\frac{n_j}{2}})$, $\sigma^{(j)}_k\in\mathbb{R}$ for $k=1,\ldots,\frac{n_j}{2}$.
\end{lemma}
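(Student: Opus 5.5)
We work in the semi-simple setting of this subsection, so $N_j=0$ and the blocks $U_{jj},W_{jj}$ in (\ref{xxg-2}) are arbitrary real $T$-skew-symmetric $n_j\times n_j$ matrices. The plan has three steps: reduce $\Gamma_{jj}$ to a normal form under an orthogonal-symplectic congruence, get the evenness of $n_j$ from invertibility, and then read off the block structure (\ref{xxg-7}).

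\textbf{Step 1: reduction to a skew-Hamiltonian problem.} Write $\Gamma_{jj}=\bigl[\begin{smallmatrix} U & W\\ W & -U\end{smallmatrix}\bigr]$. A direct computation shows that $\Gamma_{jj}\Lambda$ is skew-Hamiltonian:
\[
\Gamma_{jj}\Lambda=\begin{bmatrix}-W & U\\ U & W\end{bmatrix},\qquad
\Gamma_{jj}\Lambda\Lambda=-\Gamma_{jj},
\]
and $-\Gamma_{jj}$ is skew-symmetric. Alternatively, one can use the complex form $W+{\rm i}U$, which is complex skew-symmetric.

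The key observation concerns $\Gamma_{jj}$ under an orthogonal-symplectic $Q=\bigl[\begin{smallmatrix}Q_1&Q_2\\-Q_2&Q_1\end{smallmatrix}\bigr]$. Identify $Q$ with the unitary $\mathcal Q=Q_1+{\rm i}Q_2$. Identify $\Gamma_{jj}$ with the complex matrix $Z=U+{\rm i}W$, which satisfies $Z^T=-Z$. This structure "$[A,B;B,-A]$" corresponds to the antilinear map, i.e. it transforms by $Z\mapsto \mathcal Q^T Z\mathcal Q$ (Takagi-type congruence) rather than by similarity. I will verify this correspondence explicitly by block multiplication.

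\textbf{Step 2: normal form.} Apply the Youla/Takagi decomposition for complex skew-symmetric matrices: there is a unitary $\mathcal Q$ with
\[
\mathcal Q^TZ\mathcal Q=\mbox{diag}\left(\begin{bmatrix}0&\sigma_1\\-\sigma_1&0\end{bmatrix},\ldots\right)\oplus 0,\qquad \sigma_i\ge 0 .
\]
Since $\Gamma$, and hence $\Gamma_{jj}$, is nonsingular by Theorem \ref{thm-1} and Lemma \ref{lem-3}, $Z$ is nonsingular. A nonsingular skew-symmetric matrix has even order, so $n_j$ is even.

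Translating back, $\mathcal Q^TZ\mathcal Q=\tilde D$ with $\tilde D$ real. A real $Z$-image $\tilde D$ has zero $W$-part, so it gives $Q^T\Gamma_{jj}Q=\mbox{diag}(\tilde D,-\tilde D)$.

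\textbf{Step 3: permutation to the stated form.} Reorder the $2\times2$ blocks by a permutation matrix $P=\mbox{diag}(P_0,P_0)$, which is orthogonal-symplectic. This turns $\tilde D$ into $\bigl[\begin{smallmatrix}0&D_j\\-D_j&0\end{smallmatrix}\bigr]$ and $-\tilde D$ into $\bigl[\begin{smallmatrix}0&-D_j\\D_j&0\end{smallmatrix}\bigr]$, which is exactly (\ref{xxg-7}).

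As a fallback, if the complex correspondence is awkward, Lemma \ref{lem-5} can be applied to the skew-Hamiltonian matrix $\Gamma_{jj}\Lambda$ to get the block-triangular form $[H_1,H_2;0,H_1^T]$. Exploiting the extra symmetry of $\Gamma_{jj}$ then forces $H_1$ to be skew-symmetric and block diagonal.

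\textbf{Main obstacle.} The main obstacle is Step 1: checking carefully that orthogonal-symplectic congruence of matrices of the form $[U,W;W,-U]$ corresponds to the Takagi congruence $\mathcal Q^T Z\mathcal Q$, including getting the signs and conjugations right. This is the step I would verify first by direct block multiplication.
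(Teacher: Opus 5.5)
Your proof is correct, but it takes a different route from the paper's.

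\textbf{The paper's route.} The paper observes that $\Gamma_{jj}$ is skew-symmetric and also skew-Hamiltonian: it has the form $\bigl[\begin{smallmatrix}A&G\\ F&A^T\end{smallmatrix}\bigr]$ with $A=U_{jj}$ and $G=F=W_{jj}$ skew-symmetric. It applies the real skew-Hamiltonian Schur form of Lemma~\ref{lem-5} directly to $\Gamma_{jj}$. Orthogonal congruence preserves skew-symmetry, so the zero $(2,1)$ block forces $H_{j2}=0$. The block $H_{j1}$ is then both upper quasi-triangular and skew-symmetric, hence block diagonal with $2\times 2$ blocks. Nonsingularity rules out $1\times 1$ zero blocks, so $n_j$ is even. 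A permutation $\mbox{diag}(P_0,P_0)$ finishes.

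\textbf{Your route.} You read $\Gamma_{jj}$ as the real form of the antilinear map $v\mapsto Z\bar v$, with $Z=U_{jj}+{\rm i}W_{jj}$. You then invoke the Youla normal form of the complex skew-symmetric matrix $Z$. The step you flagged does hold. Take $Q=\bigl[\begin{smallmatrix}Q_1&Q_2\\ -Q_2&Q_1\end{smallmatrix}\bigr]$ and $\mathcal Q=Q_1+{\rm i}Q_2$. Block multiplication gives $Q^T\bigl[\begin{smallmatrix}A&B\\ B&-A\end{smallmatrix}\bigr]Q=\bigl[\begin{smallmatrix}A'&B'\\ B'&-A'\end{smallmatrix}\bigr]$ with $A'+{\rm i}B'=\mathcal Q^T(A+{\rm i}B)\mathcal Q$. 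Such a $Q$ is orthogonal-symplectic exactly when $\mathcal Q$ is unitary.

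\textbf{What each buys.} The paper stays in real arithmetic and reuses a lemma it already quotes. Your version imports Youla's theorem, but it gains several things:
\begin{itemize}
\item it explains where the structure comes from;
\item it gets the parity of $n_j$ at once from the nonsingularity of $Z^T=-Z$;
\item it shows the $\sigma^{(j)}_k$ can be taken to be the positive singular values of $U_{jj}+{\rm i}W_{jj}$;
\item it matches how Theorem~\ref{thm-T-s} later treats $Q_{j1}+{\rm i}Q_{j2}$ as unitary.
\end{itemize}
Neither argument actually needs semi-simplicity. Both use only the skew-symmetry of $U_{jj},W_{jj}$ and the nonsingularity of $\Gamma_{jj}$.

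\textbf{On your fallback.} Applying Lemma~\ref{lem-5} to $\Gamma_{jj}\Lambda$ instead of $\Gamma_{jj}$ gives $\hat Q^T\Gamma_{jj}\hat Q=\bigl[\begin{smallmatrix}H_2&-H_1\\ H_1^T&0\end{smallmatrix}\bigr]$, using that $\hat Q$ commutes with $\Lambda$. Skew-symmetry imposes nothing new on this matrix. You would need the $[A,B;B,-A]$ shape to force $H_2=0$ and $H_1^T=-H_1$. You would then need an extra rotation, e.g.\ $\mathcal Q=e^{{\rm i}\pi/4}I$, to turn $\bigl[\begin{smallmatrix}0&-H_1\\ -H_1&0\end{smallmatrix}\bigr]$ into $\mbox{diag}(H_1,-H_1)$. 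Applying the lemma to $\Gamma_{jj}$ itself, which is already skew-Hamiltonian, avoids both steps.
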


\begin{proof}
It is easy to verify that the matrix $\Gamma_{jj}=\bigl[\begin{smallmatrix}
U_{jj} & W_{jj}\\
W_{jj} & -U_{jj}\\
\end{smallmatrix}\bigr]$ is a skew-Hamiltonian matrix, it follows from Lemma \ref{lem-5} that there is an orthogonal-symplectic matrix $\hat{Q}_j\in\mathbb{R}^{2n_j\times 2n_j}$ such that 
$$\hat{Q}_j^TH\hat{Q}_j=\begin{bmatrix}
H_{j1} & H_{j2}\\
0 & H_{j1}^T\\
\end{bmatrix},$$
where $H_{j1}, H_{j2}\in\mathbb{R}^{n_j\times n_j}$, $H_{j2}^T=-H_{j2}$ and $H_{j1}$ is upper quasi-triangular. Note that $\Gamma_{jj}$ is nonsingular and skew-symmetric, then $H_{j2}=0$ and $n_j$ is even. Moreover, we have $$H_{j1}=\mbox{diag}\left(\begin{bmatrix}
0 & \sigma^{(j)}_1\\
-\sigma^{(j)}_1 & 0\\
\end{bmatrix},\begin{bmatrix}
0 & \sigma^{(j)}_2\\
-\sigma^{(j)}_2 & 0\\
\end{bmatrix},\cdots,\begin{bmatrix}
0 & \sigma^{(j)}_{\frac{n_j}{2}}\\
-\sigma^{(j)}_{\frac{n_j}{2}} & 0\\
\end{bmatrix}\right),$$
where $\sigma^{(j)}_1, \ldots, \sigma^{(j)}_{\frac{n_j}{2}}\in\mathbb{R}$ are nonzero. Clearly, there exists a permutation matrix $\hat{Q}_{j2}\in\mathbb{R}^{n_j\times n_j}$ such that 
$$\hat{Q}_{j2}^TH_{j1}\hat{Q}_{j2}=\begin{bmatrix}
0 & D_{j}\\
-D_{j} & 0\\
\end{bmatrix},$$
where $D_{j}=\mbox{diag}(\sigma^{(j)}_1,\ldots,\sigma^{(j)}_{\frac{n_j}{2}})$. Let $Q_j=\hat{Q}_{j1}\bigl[\begin{smallmatrix}
\hat{Q}_{j2} & 0\\
0 & \hat{Q}_{j2}\\
\end{smallmatrix}\bigr]$. It is easy to verify that $Q_j$ is an orthogonal-symplectic matrix which satisfies (\ref{xxg-7}). 
\end{proof}

\begin{lemma}\label{lem-7}\cite{Jia-siam-2011}
Let $M\in\mathbb{R}^{2m\times 2m}$ be a symmetric skew-Hamiltonian matrix. Then there is an orthogonal-symplectic matrix $\tilde{Q}_2\in\mathbb{R}^{2m\times 2m}$ such that  
$$\tilde{Q}_2^TM\tilde{Q}_2=\begin{bmatrix}
D & 0\\
0 & D\\
\end{bmatrix},$$
where $D=\mbox{diag}(\delta_1,\ldots,\delta_m)\in\mathbb{R}^{m\times m}$.
\end{lemma}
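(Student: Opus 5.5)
The plan is to reduce Lemma~\ref{lem-7} to the real spectral theorem, using the structure that symmetry and the skew-Hamiltonian property force on $M$. First I will record the block form of $M$. Write $M=\bigl[\begin{smallmatrix} M_{11} & M_{12}\\ M_{21} & M_{22}\end{smallmatrix}\bigr]$ with $m\times m$ blocks. The skew-Hamiltonian condition $M\Lambda=-(M\Lambda)^T$ is equivalent to $\Lambda^TM^T\Lambda=M$, i.e. $M^T$ being carried to $M$ by the symplectic involution, and writing it out blockwise gives $M_{22}=M_{11}^T$, $M_{12}^T=-M_{12}$ and $M_{21}^T=-M_{21}$. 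Adding $M^T=M$ yields $M_{11}=M_{11}^T=M_{22}$ and $M_{21}=M_{12}^T=-M_{12}$. So
$$M=\begin{bmatrix} A & B\\ -B & A\end{bmatrix},\qquad A^T=A,\ B^T=-B,$$
which has exactly the shape of the orthogonal-symplectic matrices recalled before Lemma~\ref{lem-5}.

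Next I will use the standard identification of matrices of the form $\bigl[\begin{smallmatrix} A & B\\ -B & A\end{smallmatrix}\bigr]$ with the complex matrices $A-{\rm i}B$ (the sign convention is fixed so that products are preserved). Let $C:=A-{\rm i}B\in\mathbb{C}^{m\times m}$. Since $A^T=A$ and $B^T=-B$, $C$ is Hermitian. By the complex spectral theorem there is a unitary $U=U_1-{\rm i}U_2$, with $U_1,U_2$ real, such that $U^HCU=D=\mbox{diag}(\delta_1,\ldots,\delta_m)$ with each $\delta_k$ real. Set
$$\tilde{Q}_2=\begin{bmatrix} U_1 & U_2\\ -U_2 & U_1\end{bmatrix}.$$
The map $\phi(A-{\rm i}B)=\bigl[\begin{smallmatrix} A & B\\ -B & A\end{smallmatrix}\bigr]$ is a real algebra homomorphism satisfying $\phi(Z^H)=\phi(Z)^T$. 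Therefore $U^HU=I$ gives $\tilde{Q}_2^T\tilde{Q}_2=I_{2m}$, so $\tilde{Q}_2$ is orthogonal. It commutes with $\Lambda=\phi({\rm i}I)$ up to the convention, hence $\tilde{Q}_2\Lambda\tilde{Q}_2^T=\Lambda$, so $\tilde{Q}_2$ is also symplectic. Finally $\tilde{Q}_2^TM\tilde{Q}_2=\phi(U^HCU)=\phi(D)=\mbox{diag}(D,D)$, because $D$ is real.

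The only real obstacle is bookkeeping: choosing the sign in the identification $\phi$ consistently, so that multiplicativity holds and $\Lambda$ lies in the image. I will verify this once, by checking $\phi(Z_1)\phi(Z_2)=\phi(Z_1Z_2)$ directly. Everything else follows from the spectral theorem for Hermitian matrices.
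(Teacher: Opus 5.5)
Your proof is correct. The paper gives no argument for Lemma~\ref{lem-7}; it only cites Jia and Wei. The nearest proof in the paper, that of Lemma~\ref{lem-6}, goes through the skew-Hamiltonian Schur form of Lemma~\ref{lem-5}. Along that route one would take $Q^TMQ=\bigl[\begin{smallmatrix} H_1 & H_2\\ 0 & H_1^T\end{smallmatrix}\bigr]$. Symmetry of $Q^TMQ$ forces $H_2=0$ and $H_1=H_1^T$, so the quasi-triangular $H_1$ is block diagonal with symmetric $1\times 1$ and $2\times 2$ blocks. One then finishes with a real orthogonal $V$ diagonalizing $H_1$ and the orthogonal-symplectic factor $\mbox{diag}(V,V)$.

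Your route is different. You observe that symmetry together with the skew-Hamiltonian property forces $M=\bigl[\begin{smallmatrix} A & B\\ -B & A\end{smallmatrix}\bigr]$ with $A^T=A$ and $B^T=-B$. So $M$ is the real representation of the Hermitian matrix $A-{\rm i}B$, and the lemma reduces to the complex spectral theorem via the multiplicative, transpose-compatible map $\phi$.

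What each approach buys:
\begin{itemize}
\item Your argument is more elementary and self-contained, since it needs no structured Schur decomposition. It also identifies the $\delta_k$ as the eigenvalues of $A-{\rm i}B$, each appearing twice in $M$.
\item The Schur-form route has the advantage of matching the argument the paper uses for Lemma~\ref{lem-6}.
\end{itemize}

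One small slip: with your convention $\phi(A-{\rm i}B)$, you have $\Lambda=\phi(-{\rm i}I)$, not $\phi({\rm i}I)$. This is harmless, because any scalar multiple of the identity commutes with every matrix, so $\tilde{Q}_2\Lambda=\Lambda\tilde{Q}_2$ still holds.
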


\begin{lemma}\label{lem-8}
Let $U=\bigl[\begin{smallmatrix}
U_1 & U_2\\
U_2 &-U_1\\
\end{smallmatrix}\bigr]$ be a nonsingular matrix, where $U_1, U_2\in\mathbb{R}^{m\times m}$. There exist two  orthogonal-symplectic matrices $P_1$ and $P_2$ in $\mathbb{R}^{2m\times 2m}$ such that 
\begin{equation}\label{xzgs-4}
P_1^TUP_2=\begin{bmatrix}
\hat{D} & 0\\
0 & -\hat{D}\\
\end{bmatrix},\ \ \hat{D}=\mbox{diag}(\sigma_1,\ldots,\sigma_m), \ \ \sigma_k>0, \ k=1,\ldots,m.
\end{equation}
\end{lemma}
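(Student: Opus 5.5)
The natural route is to reduce to a complex singular value decomposition. The matrix $U=\bigl[\begin{smallmatrix} U_1 & U_2\\ U_2 & -U_1\end{smallmatrix}\bigr]$ is a real representation of the complex matrix $C:=U_1-{\rm i}U_2$. Let $K:=\mathrm{diag}(I_m,-I_m)$. Then
$$UK=\begin{bmatrix} U_1 & -U_2\\ U_2 & U_1\end{bmatrix},$$
which is the standard real embedding of $C=U_1+{\rm i}U_2$ (up to sign convention). Orthogonal-symplectic matrices $\bigl[\begin{smallmatrix} Q_1 & Q_2\\ -Q_2 & Q_1\end{smallmatrix}\bigr]$ are exactly the real embeddings of unitary matrices $Q_1\mp {\rm i}Q_2$, with the sign fixed consistently with the embedding of $UK$. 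Also, $U$ nonsingular implies $UK$ nonsingular, and hence $C$ is nonsingular.

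\textbf{Step 1.} Take a complex SVD $C=\mathcal{U}\Sigma\mathcal{V}^H$ with $\mathcal{U},\mathcal{V}$ unitary and $\Sigma=\hat D=\mathrm{diag}(\sigma_1,\ldots,\sigma_m)$. Nonsingularity of $C$ gives $\sigma_k>0$ for all $k$.

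\textbf{Step 2.} Write $\mathcal{U}=\mathcal{U}_1+{\rm i}\,\mathcal{U}_2$ and $\mathcal{V}=\mathcal{V}_1+{\rm i}\,\mathcal{V}_2$. Let $\phi(\cdot)$ denote the real embedding $A+{\rm i}B\mapsto \bigl[\begin{smallmatrix} A & -B\\ B & A\end{smallmatrix}\bigr]$. This map is a multiplicative homomorphism and sends $M^H$ to $\phi(M)^T$. It follows that $\phi(\mathcal U)$ and $\phi(\mathcal V)$ are orthogonal, and, having the block form $\bigl[\begin{smallmatrix} Q_1 & Q_2\\ -Q_2 & Q_1\end{smallmatrix}\bigr]$ with $Q_2=-\mathcal U_2$, they are orthogonal-symplectic. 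Applying $\phi$ to the SVD gives
$$\phi(\mathcal U)^T\,UK\,\phi(\mathcal V)=\mathrm{diag}(\hat D,\hat D).$$

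\textbf{Step 3.} Absorb $K$. Since $K$ is not symplectic, we cannot simply use $P_2=K\phi(\mathcal V)$. However, $K\bigl[\begin{smallmatrix} Q_1 & Q_2\\ -Q_2 & Q_1\end{smallmatrix}\bigr]K=\bigl[\begin{smallmatrix} Q_1 & -Q_2\\ Q_2 & Q_1\end{smallmatrix}\bigr]$, which is again orthogonal-symplectic (it is the embedding of the conjugate matrix). Hence
$$U\,\phi(\mathcal V)=UK\cdot K\phi(\mathcal V)K\cdot K.$$
Set $P_1=\phi(\mathcal U)$ and $P_2=K\phi(\mathcal V)K$. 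Then
$$P_1^TUP_2=P_1^T\,UK\,\phi(\mathcal V)\,K=\mathrm{diag}(\hat D,\hat D)\,K=\mathrm{diag}(\hat D,-\hat D),$$
which is (\ref{xzgs-4}).

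The main obstacle is sign bookkeeping in Steps 2 and 3. One must make sure the embedding convention matches the paper's orthogonal-symplectic form $\bigl[\begin{smallmatrix} Q_1 & Q_2\\ -Q_2 & Q_1\end{smallmatrix}\bigr]$, choosing either $C=U_1+{\rm i}U_2$ or $U_1-{\rm i}U_2$ accordingly. One must also check directly that $K\phi(\mathcal V)K$ satisfies $Q\Lambda Q^T=\Lambda$ and $Q^TQ=I$; this is a routine block multiplication.
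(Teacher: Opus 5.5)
Your proof is correct, but it takes a different route from the paper.

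\textbf{The paper's route.} The paper stays in real arithmetic throughout. It invokes Algorithm 4.4 of Benner et al.\ to obtain orthogonal-symplectic $\hat P_1,\hat P_2$ with $\hat P_1^TU\hat P_2=\mathrm{diag}(M_1,-M_1)$ and $M_1$ real upper triangular. It then takes a real SVD $M_1=\hat P_3\hat D\hat P_4^T$ and sets $P_1=\hat P_1\,\mathrm{diag}(\hat P_3,\hat P_3)$ and $P_2=\hat P_2\,\mathrm{diag}(\hat P_4,\hat P_4)$.

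\textbf{Your route.} You note that $UK=\phi(C)$ and that $\phi$ sends unitary matrices to orthogonal-symplectic ones, because every $\phi(M)$ commutes with $\Lambda=\phi(-{\rm i}I_m)$. A single complex SVD of $C$ then does all the work. The non-symplectic factor $K$ is absorbed through $K\phi(\mathcal V)K=\phi(\overline{\mathcal V})$.

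\textbf{What each buys.} Your argument is self-contained and shows why the lemma holds: it is the complex SVD written in real form. The paper's argument depends on an external structured algorithm and on the exact form of its output. In exchange, it gives a construction carried out entirely in real, structure-preserving arithmetic.

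\textbf{One fix needed.} With your convention $\phi(A+{\rm i}B)=\bigl[\begin{smallmatrix}A&-B\\B&A\end{smallmatrix}\bigr]$, you have $UK=\phi(U_1+{\rm i}U_2)$. So $C$ should be defined as $U_1+{\rm i}U_2$ from the start; your opening definition $C:=U_1-{\rm i}U_2$ is inconsistent with Step 2. Nonsingularity of $C$ then follows from $\det\phi(C)=|\det C|^2$. Alternatively, $Cx=0$ with $x=a+{\rm i}b\neq 0$ gives the real null vector $\bigl[\begin{smallmatrix}a\\ b\end{smallmatrix}\bigr]$ of $\phi(C)$.
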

\begin{proof}
By the  Algorithm 4.4 in \cite{Benner-1998}, we can find two orthogonal-symplectic matrices $\hat{P}_1$ and $\hat{P}_2$ such that 
$$\hat{P}_1^TU\hat{P}_2=\begin{bmatrix}
M_1 & 0\\
0 & -M_1\\
\end{bmatrix},$$
where $M_1$ is upper triangular and $M_1^T$ is upper Hessenberg. Let the SVD of $M_1$ be $M_1=\hat{P}_3\hat{D}\hat{P}_4^T$, where $\hat{P}_3, \hat{P}_4\in\mathbb{R}^{m\times m}$ are orthogonal matrices and  $\hat{D}=\mbox{diag}(\sigma_1,\ldots,\sigma_m), \sigma_k>0, k=1,\ldots,m$. Let $P_1=\hat{P}_1\bigl[\begin{smallmatrix}
\hat{P}_3 & 0\\
0 &\hat{P}_3\\
\end{smallmatrix}\bigr]$ and $P_2=\hat{P}_2\bigl[\begin{smallmatrix}
\hat{P}_4 & 0\\
0 &\hat{P}_4\\
\end{smallmatrix}\bigr]$. It is easy to verify that $P_1$ and $P_2$ are orthogonal-symplectic matrices which satisfy (\ref{xzgs-4}).\end{proof}

With Lemma \ref{lem-6}, Lemma \ref{lem-7} and Lemma \ref{lem-8}, we now specify the simplest possible structure of $\Gamma\in\mathcal{S}_{(J,T,-1,\epsilon_2)}$ when all eigenvalues of $T$-skew-symmetric and $T$-odd are semi-simple. 

\begin{theorem}\label{thm-T-s}
Suppose that all eigenvalues of the $T$-skew-symmetric matrix polynomial $P(\lambda)$ are semi-simple with a real standard pair $(X, J)$ given by (\ref{gss-8-1}) and (\ref{gss-8-2}). Then, the algebraic multiplicity $n_j$ of eigenvalue $\lambda_j$ is even, i.e., $n_j=2m_j$, $m_j\in\mathbb{Z}^+$ for $j=1,\ldots, s$, and  there exists a real standard pair 
$(\tilde{X}, J)$ such that the corresponding matrix $\tilde{\Gamma}\in\mathcal{S}_{(J,T,-1,-1)}$ has the structure
\begin{equation}\label{xxg-1}
\begin{array}{ll}
\tilde{\Gamma}=&\mbox{diag}\left(\begin{bmatrix}
0 & E_{m_1}\\
-E_{m_1} & 0\\
\end{bmatrix}, \begin{bmatrix}
0 & -E_{m_1}\\
E_{m_1} & 0\\
\end{bmatrix},\cdots,\begin{bmatrix}
0 & E_{m_r}\\
-E_{m_r} & 0\\
\end{bmatrix}, \begin{bmatrix}
0 & -E_{m_r}\\
E_{m_r} & 0\\
\end{bmatrix},\right.\\
 &\ \ \ \ \ \ \ \  \ \ \left.\begin{bmatrix}
0 & I_{m_{r+1}}\\
- I_{m_{r+1}}  & 0\\
\end{bmatrix},\cdots,\begin{bmatrix}
0 & I_{m_{s}}\\
- I_{m_{s}}  & 0\\
\end{bmatrix}\right),\\
\end{array}
\end{equation} 
where  $E_{m_j}=\mbox{diag}(\pm1,\pm1,\ldots,\pm1)$ is of order $m_j\times m_j$, $j=1,\ldots,r.$
\end{theorem}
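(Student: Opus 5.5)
The plan is to start from Theorem \ref{thm-4}(1) and Lemma \ref{lem-3}. Together they write the parameter matrix of the given standard pair as $\Gamma=\mbox{diag}(\Gamma_{11},\ldots,\Gamma_{ss})$. Under semi-simplicity every $N_j=0$, so the Hankel constraints disappear. Theorem \ref{thm-4}(1) then gives $\Gamma_{jj}=\bigl[\begin{smallmatrix} U_{jj} & W_{jj}\\ W_{jj} & -U_{jj}\end{smallmatrix}\bigr]$ for $j\le r$, and $\Gamma_{jj}=U_{jj}$ for $j>r$, where $U_{jj},W_{jj}$ are arbitrary real skew-symmetric matrices. Since $\Gamma$ is nonsingular, each block is nonsingular and skew-symmetric. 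A real nonsingular skew-symmetric matrix has even order, so $n_j$ is even for $j>r$. For $j\le r$, evenness of $n_j$ comes from Lemma \ref{lem-6}; alternatively, $U_{jj}+{\rm i}W_{jj}$ is a nonsingular skew-symmetric matrix, which forces even order.

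The key step is a change of basis that commutes with $J_j$. Each block $J_j$ is either $\alpha_jI_{n_j}$ (for $j>r$) or $\bigl[\begin{smallmatrix}\alpha_j I & \beta_j I\\ -\beta_j I & \alpha_j I\end{smallmatrix}\bigr]$ (for $j\le r$). Any $Q_j$ of the form $\bigl[\begin{smallmatrix}Q_1 & Q_2\\ -Q_2 & Q_1\end{smallmatrix}\bigr]$ commutes with $J_j$, and in particular every orthogonal-symplectic matrix does. If $Q=\mbox{diag}(Q_1,\ldots,Q_s)$ commutes with $J$, define $\tilde X=XQ$. Then $\tilde XJ^i=XJ^iQ$, so $\tilde X_L=X_LQ$ is nonsingular, and (\ref{gs-2}) still holds. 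Hence $(\tilde X,J)$ is again a real standard pair, and by (\ref{gs-4}) its parameter matrix is $\tilde\Gamma=Q^{-1}\Gamma Q^{-T}$. More generally, I will use any nonsingular $Q$ commuting with $J$, so that scalings are allowed as well.

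The canonical forms are obtained block by block.

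\emph{Real eigenvalues} ($j>r$). The block $J_j=\alpha_jI$ is scalar, so any nonsingular $Q_j$ is allowed. By the real canonical form of skew-symmetric matrices under congruence, $Q_j^{-1}U_{jj}Q_j^{-T}=\bigl[\begin{smallmatrix}0&I_{m_j}\\-I_{m_j}&0\end{smallmatrix}\bigr]$.

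\emph{Complex pairs} ($j\le r$). Apply Lemma \ref{lem-6} to get an orthogonal-symplectic $Q_j$ with $Q_j^T\Gamma_{jj}Q_j$ equal to the form (\ref{xxg-7}) with $D_j$ nonsingular. Orthogonality gives $Q_j^{-1}=Q_j^T$, so this is exactly the required congruence. Next rescale by $Q_j'=\mbox{diag}(|D_j|^{1/2},|D_j|^{1/2},|D_j|^{1/2},|D_j|^{1/2})$, arranged so that it has the shape $\bigl[\begin{smallmatrix}S&0\\0&S\end{smallmatrix}\bigr]$ with $S=\mbox{diag}(|D_j|^{1/2},|D_j|^{1/2})$. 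This matrix commutes with $J_j$. Congruence by $Q_j'^{-1}$ then replaces $D_j$ by $E_{m_j}=\mbox{sign}(D_j)$, which yields (\ref{xxg-1}).

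The main obstacle is the complex-pair block. The transformation there must simultaneously preserve $J_j$, which restricts us to the commutant $\bigl[\begin{smallmatrix}Q_1&Q_2\\-Q_2&Q_1\end{smallmatrix}\bigr]$, and reduce a skew-Hamiltonian skew-symmetric matrix. I must also check that the diagonal scaling really has the commuting two-by-two block form. It does, because it acts identically on both halves of $\mathbb{R}^{2n_j}$. Finally, the signs in $E_{m_j}$ cannot be removed: the sign pattern is a congruence invariant (a signature of the Hermitian form ${\rm i}(U+{\rm i}W)$ within the commutant), which explains why $E_{m_j}$ remains in the statement.
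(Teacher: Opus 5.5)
Your proof is correct and follows the paper's argument essentially step for step. $\Gamma$ is block diagonal by Lemma~\ref{lem-3} and Theorem~\ref{thm-4}(1) with $N_j=0$. Complex-pair blocks are reduced by the orthogonal-symplectic $Q_j$ of Lemma~\ref{lem-6}, followed by the $J_j$-commuting scaling $\mbox{diag}(|D_j|,|D_j|,|D_j|,|D_j|)^{1/2}$. Real-eigenvalue blocks are brought to $\bigl[\begin{smallmatrix}0&I_{m_j}\\-I_{m_j}&0\end{smallmatrix}\bigr]$ by congruence. Finally, $\tilde X=XQ$ with $QJ=JQ$ gives $\tilde\Gamma=Q^{-1}\Gamma Q^{-T}$; your bookkeeping here is cleaner than the paper's, which writes $\tilde X_j=X_jQ_jD_j^{-1}$ in one case while using $D^{-1}Q^T\Gamma QD^{-1}$.

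You should, however, drop your closing remark. It is not used in the proof, but it is false. The matrix $P=\mbox{diag}(E_{m_j},I_{m_j},E_{m_j},I_{m_j})$ has the form $\mbox{diag}(S,S)$, so it commutes with $J_j$. Since $P=P^{-1}=P^T$, the new parameter block is $P\tilde\Gamma_{jj}P$, and this replaces every $E_{m_j}$ by $I_{m_j}$.

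Conceptually, congruence by $Q=\bigl[\begin{smallmatrix}Q_1&Q_2\\-Q_2&Q_1\end{smallmatrix}\bigr]$ acts as $Q^T\Gamma_{jj}Q$. It corresponds exactly to $U_{jj}+{\rm i}W_{jj}\mapsto K^T(U_{jj}+{\rm i}W_{jj})K$ with $K=Q_1+{\rm i}Q_2$. That is complex $T$-congruence of a nonsingular complex skew-symmetric matrix, which has no invariant beyond its size. Also, ${\rm i}(U_{jj}+{\rm i}W_{jj})$ is not Hermitian unless $W_{jj}=0$, so there is no signature to appeal to. The $E_{m_j}$ in the statement is therefore a non-optimal normalization, not a congruence invariant.
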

\begin{proof}
We discuss the structures of $\Gamma_{jj}$ in (\ref{xxg-2}) and (\ref{xxg-2-1}) in two different cases mentioned at the beginning of this section.

Case 1. $\lambda_j=\alpha_j+i\beta_j$ with $\lambda_j\in\mathbb{C}/\mathbb{R}$, and $J_j$ and $\Gamma_{jj}$
are defined by (\ref{xxg-3}) and (\ref{xxg-2}), respectively, for $j=1,\ldots,r$.  Observe from (\ref{xxg-2}) that for $j=1,\ldots,r$, the skew-symmetric matrix $\Gamma_{jj}$ is in fact skew-Hamiltonian. By Lemma \ref{lem-6}, we have $n_j=2m_j$, $m_j\in\mathbb{Z}^+$,  and there exist an orthogonal-symplectic matrix $Q_j=\bigl[\begin{smallmatrix}
Q_{j1} & Q_{j2}\\
-Q_{j2} & Q_{j1}\\
\end{smallmatrix}\bigr]\in\mathbb{R}^{2n_j\times 2n_j}$ such that 
$$Q_j^T\Gamma_{jj}Q=\mbox{diag}\left(\begin{bmatrix}
0 & D_{j}\\
-D_{j} & 0\\
\end{bmatrix}, \begin{bmatrix}
0 & -D_{j}\\
D_{j} & 0\\
\end{bmatrix}\right),,$$
where $D_{j}=\mbox{diag}(\sigma^{(j)}_1,\ldots,\sigma^{(j)}_{m_j})$, $\sigma^{(j)}_k\in\mathbb{R}$ for $k=1,\ldots,m_j$. Let $D_{jj}=\mbox{diag}(|D_j|,|D_j|,|D_j|,|D_j|)^{1/2}$. Note that $Q_{j1}+{\rm i}Q_{j2}$ is an unitary matrix in $\mathbb{C}^{n_j\times n_j}$. Columns of the complex-valued $n\times n_j$ matrix 
$$\tilde{X}_j:=X_j(Q_{j1}+{\rm i}Q_{j2})D_{jj}$$
remain to represent eigenvectors of $P(\lambda)$ with corresponding to eigenvalue $\lambda_j$. Based on (\ref{gss-8-1}), we can identify the real and the imaginary parts of $\tilde{X}_j$ as 
$$\begin{array}{ll}
\tilde{X}_j&=\tilde{X}_{jR}+{\rm i}\tilde{X}_{jI}\\
& =[(X_{jR}Q_{j1}-X_{jI}Q_{j2})\mbox{diag}(|D_{j}|^{1/2},|D_j|^{1/2}),\ (X_{jR}Q_{j2}+X_{jI}Q_{j1})\mbox{diag}(|D_{j}|^{1/2},|D_j|^{1/2})]\\
&=[X_{jR},\ X_{jI}]Q_jD_{jj}.\\
\end{array}$$
It follows that $(\tilde{X}_j, J_j)$ is a real pair of $P(\lambda)$. 
Since $N_j=0$ by assumption, it is easy to verify that $Q_j^TJ_jQ_j=J_j$ for $j=1,\ldots, r$, and 
\begin{equation}\label{xxg-8}
\tilde{\Gamma}_{jj}=D_{jj}^{-1}Q_j^T\Gamma_{jj}Q_jD_{jj}^{-1}=\mbox{diag}\left(\begin{bmatrix}
0 & E_{m_j}\\
-E_{m_j} & 0\\
\end{bmatrix}, \begin{bmatrix}
0 & -E_{m_j}\\
E_{m_j} & 0\\
\end{bmatrix}\right),
\end{equation}
where $E_{m_j}=\mbox{diag}(\pm1,\pm1,\ldots,\pm1)$ is of order $m_j\times m_j$.
 And furthermore, 
$$\tilde{X}_j=X_jQ_jD_{jj},\ \ \ \tilde{X}_jJ_j=X_jQ_jD_{jj}J_j=X_jQ_jJ_jD_{jj}=X_jJ_jQ_jD_{jj}.$$

Case 2. $\lambda_j\in\mathbb{R}$, $j=r+1,\ldots, s$. And $J_j$ and $\Gamma_{jj}$ are defined by (\ref{xxg-4}) and (\ref{xxg-2-1}), respectively, $j=r+1,\ldots,s$. Since $\Gamma_{jj}$ is nonsingular and skew-symmetric, it follows that $n_j=2m_j$, $m_j\in\mathbb{Z}^+$. Then, there exist an unitary matrx $Q_{j}\in\mathbb{R}^{n_j\times n_j}$ such that 
$$Q_{j}^T\Gamma_{jj}Q_j=\begin{bmatrix}
0 & T_j\\
-T_j & 0\\
\end{bmatrix},$$
in which $T_j=\mbox{diag}(\tau^{(j)}_1,\ldots,\tau^{(j)}_{m_j})$, $\tau_{k}^{(j)}>0$, $k=1,\ldots,m_j$. Define $D_{jj}=\mbox{diag}(T_j,T_j)^{1/2}$. Since $N_j=0$, it is easy to verify that $J_jQ_j=Q_jJ_j$ and $J_jD_{jj}=D_{jj}J_j$ for $j=r+1,\ldots, s$. Then
\begin{equation}\label{xxg-6}
\tilde{\Gamma}_{jj}:=D^{-1}_jQ_j^T\Gamma_{jj}Q_jD_{j}^{-1}=\begin{bmatrix}
0 & I_{m_j}\\
-I_{m_j}& 0\\
\end{bmatrix},
\end{equation}
and 
$$\tilde{X}_{j}=X_jQ_jD_j^{-1},\ \ \tilde{X}_jJ_j=X_jQ_jD_j^{-1}=X_jJ_jQ_jD_j^{-1}.$$

Let 
$$\tilde{X}=[\tilde{X}_1,\tilde{X}_2,\ldots,\tilde{X}_s], \ \ \tilde{\Gamma}:=\mbox{diag}(\tilde{\Gamma}_{11},\tilde{\Gamma}_{22}\ldots,,\tilde{\Gamma}_{ss}).$$
$$Q=\mbox{diag}(Q_1,Q_2,\ldots,Q_s),\ \ D=\mbox{diag}(D_{11},D_{22},\ldots,D_{ss}).$$
We can see from (\ref{xxg-8}) and (\ref{xxg-6}) that 
$$\tilde{\Gamma}=D^{-1}Q^T\Gamma QD^{-1}=\left(DQ^TX_L^T\mathcal{A}X_LQD\right)^{-1}=(\tilde{X}_L^T\mathcal{A}\tilde{X}_L)^{-1},$$
has the structure specified in (\ref{xxg-1}), which means that $\tilde{\Gamma}$ is indeed the matrix corresponding to the standard pair $(\tilde{X},\ J)$, where $\tilde{X}=XQD$ and $\tilde{X}_L^T=[\tilde{X}^T, J^T\tilde{X}^T, \ldots, (J^{k-1})^T\tilde{X}^T]$. \end{proof}

\begin{theorem}\label{thm-T-o}
Suppose that all eigenvalues of $T$-odd $P(\lambda)$ are semi-simple with a real standard pair $(X, J)$ given by (\ref{gss-8-1}) and (\ref{gss-8-2}). Then there exists a real standard pair 
$(\tilde{X}, J)$ such that the corresponding matrix $\tilde{\Gamma}\in\mathcal{S}_{(J,T,-1,1)}$ has the structure
\begin{equation}\label{xxg-1-odd}
\begin{array}{ll}
\tilde{\Gamma}=&\mbox{diag}\left(\begin{bmatrix}
0&\mathfrak{I}_{2n_1}\\
\mathfrak{I}_{2n_1}&0\\
\end{bmatrix}, \cdots, \begin{bmatrix}
0&\mathfrak{I}_{2n_{r_1}}\\
\mathfrak{I}_{2n_{r_1}}&0\\
\end{bmatrix}, \begin{bmatrix}
E_{n_{r_1+1}} &0\\
0& E_{n_{r_1+1}}\\
\end{bmatrix}, \cdots, \begin{bmatrix}
E_{n_{r_2}} &0\\
0& E_{n_{r_2}}\\
\end{bmatrix},\right.\\
&\ \ \ \ \ \ \ \ \  \left. \begin{bmatrix}
0 & I_{n_{r_2+1}}\\
I_{n_{r_2+1}}  & 0\\
\end{bmatrix},\cdots,\begin{bmatrix}
0 & I_{n_{s-1}}\\
I_{n_{s-1}}  & 0\\
\end{bmatrix},E_{n_s}\right),\\
\end{array}
\end{equation} 
where  $\mathfrak{I}_{2n_k}=\mbox{diag}(I_{n_k}, -I_{n_k})$, $k=1,\ldots,r_1$, $E_{n_k}=\mbox{diag}(\pm1,\pm1,\ldots,\pm1)$ is of order $n_k\times n_k$ for $k=r_1+1,\ldots, r_2, s$. 
\end{theorem}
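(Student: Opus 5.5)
The plan follows the proof of Theorem \ref{thm-T-s}. Start from the block form of Theorem \ref{thm-4}(2). On each diagonal block, find a congruence $\Gamma_{jj}\mapsto Z_j^{-1}\Gamma_{jj}Z_j^{-T}$ with $Z_j$ real and nonsingular, and with $Z_j$ commuting with $J_j$. Since $N_j=0$ by semi-simplicity, such a $Z_j$ reduces $\Gamma_{jj}$ to the canonical block in (\ref{xxg-1-odd}). Then set $\tilde X=XZ$, where $Z=\mbox{diag}(Z_1,\ldots,Z_s)$. Because $ZJ=JZ$, we have $\tilde X_L=X_LZ$. This is nonsingular, and (\ref{gs-2}) still holds, so $(\tilde X,J)$ is a real standard pair. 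Its matrix is
$$\tilde\Gamma=(Z^TX_L^T\mathcal{A}X_LZ)^{-1}=Z^{-1}\Gamma Z^{-T},$$
which has the claimed form. By Theorem \ref{thm-1}, $\tilde\Gamma$ automatically satisfies (\ref{gs-6}). It therefore suffices to work block by block over the four families (\ref{szgs-6-1})--(\ref{szgs-6-4}).

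\textbf{Blocks $j=r_2+1,\ldots,s-1$ (real pairs $\pm\alpha_j$).} Here $\Gamma_{jj}$ has the off-diagonal form (\ref{xzgs-5-3}), and with $N_j=0$ its block $\tilde V_{j2}$ is an arbitrary nonsingular real matrix. Take $Z_j=\mbox{diag}(\tilde V_{j2},I)$. This commutes with $J_j=\mbox{diag}(\alpha_jI,-\alpha_jI)$ and turns $\Gamma_{jj}$ into $\bigl[\begin{smallmatrix}0&I\\ I&0\end{smallmatrix}\bigr]$.

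\textbf{Block $j=s$ (zero eigenvalue).} $J_s=0$ and $\Gamma_{ss}$ is real symmetric and nonsingular. Sylvester's law of inertia gives $Z_s=Q|\Lambda|^{1/2}$, where $Q\Lambda Q^T$ is the spectral decomposition of $\Gamma_{ss}$, and this yields $E_{n_s}$.

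\textbf{Blocks $j=r_1+1,\ldots,r_2$ (imaginary pairs $\pm{\rm i}\beta_j$).} $\Gamma_{jj}$ in (\ref{xzgs-5-2}) is symmetric and skew-Hamiltonian, since $\tilde W_{j1}$ is symmetric and $\tilde W_{j2}$ is skew-symmetric. Lemma \ref{lem-7} gives an orthogonal-symplectic $\tilde Q_2$ with $\tilde Q_2^T\Gamma_{jj}\tilde Q_2=\mbox{diag}(D,D)$. Orthogonal-symplectic matrices $\bigl[\begin{smallmatrix}Q_1&Q_2\\-Q_2&Q_1\end{smallmatrix}\bigr]$ commute with $J_j=\beta_j\bigl[\begin{smallmatrix}0&I\\-I&0\end{smallmatrix}\bigr]$, and so does $\mbox{diag}(|D|^{-1/2},|D|^{-1/2})$. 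Composing the two gives $\mbox{diag}(E,E)$.

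\textbf{Blocks $j=1,\ldots,r_1$ (quadruples).} $\Gamma_{jj}=\bigl[\begin{smallmatrix}0&\tilde U_j\\ \tilde U_j^T&0\end{smallmatrix}\bigr]$ with $\tilde U_j=\bigl[\begin{smallmatrix}\tilde U_{j1}&\tilde U_{j2}\\ \tilde U_{j2}&-\tilde U_{j1}\end{smallmatrix}\bigr]$ nonsingular. Lemma \ref{lem-8} gives orthogonal-symplectic $P_1,P_2$ with $P_1^T\tilde U_jP_2=\mbox{diag}(\hat D,-\hat D)$ and $\hat D>0$. Take
$$Z_j=\mbox{diag}(P_1\,\mbox{diag}(\hat D,\hat D)^{1/2},\ P_2\,\mbox{diag}(\hat D,\hat D)^{1/2}).$$
The diagonal scalings preserve the commutation because they are of the form $\mbox{diag}(\Delta,\Delta)$, and the result is $\bigl[\begin{smallmatrix}0&\mathfrak I_{2n_j}\\ \mathfrak I_{2n_j}&0\end{smallmatrix}\bigr]$. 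One should check that the $\pm$ signs land in the order $\mbox{diag}(I,-I)$. The relevant constraint is that $\tilde U_{j}$ itself must satisfy the correct intertwining with the two $2n_j$ sub-blocks of $J_j$. This follows from $\Gamma J^T=-J\Gamma$ applied to (\ref{szgs-6-1}).

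\textbf{Main obstacle.} The delicate step is the quadruple case. There we need every transformation applied on the two halves to commute with the respective $2\times2$ rotation-type sub-blocks of $J_j$, and we must verify that the form $\bigl[\begin{smallmatrix}\tilde U_{j1}&\tilde U_{j2}\\ \tilde U_{j2}&-\tilde U_{j1}\end{smallmatrix}\bigr]$ is exactly what Lemma \ref{lem-8} handles. The first thing to do is a direct computation of $\Gamma_{jj}J_j^T+J_j\Gamma_{jj}=0$ with $N_j=0$. This confirms both the structure of $\tilde U_j$ and that orthogonal-symplectic $P_1,P_2$ commute with $\bigl[\begin{smallmatrix}\alpha I&\beta I\\-\beta I&\alpha I\end{smallmatrix}\bigr]$ and with its negative. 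Finally, assemble the blocks and verify $\tilde\Gamma=(\tilde X_L^T\mathcal A\tilde X_L)^{-1}$ exactly as at the end of the proof of Theorem \ref{thm-T-s}.
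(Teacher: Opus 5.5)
Your proposal is correct and follows essentially the same route as the paper. Both arguments apply block-by-block congruences by real nonsingular matrices commuting with $J_j$: Lemma \ref{lem-8} plus $\mbox{diag}(\Delta,\Delta)$ scalings for the quadruples, Lemma \ref{lem-7} for the imaginary pairs, and a spectral decomposition for the zero block, followed by $\tilde X = XZ$. The only deviation is cosmetic: for the real pairs $\pm\alpha_j$ you take $Z_j=\mbox{diag}(\tilde V_{j2},I)$ instead of the paper's SVD-based orthogonal-times-diagonal transformation, which is legitimate because your formula $\tilde\Gamma=Z^{-1}\Gamma Z^{-T}$ does not require $Z$ to be orthogonal.
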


\begin{proof}
According to the matrix $\Gamma_{jj}$ given by (\ref{xzgs-5-1}), (\ref{xzgs-5-2}) and (\ref{xzgs-5-3}), we discuss the structure of $\Gamma_{jj}$ in four different cases. 

Case 1. $\lambda_j=\alpha_j+{\rm i}\beta_j$ with $\alpha_j, \beta_j>0$, and $J_j$ and $\Gamma_{jj}$ are given by (\ref{szgs-6-1}) and (\ref{xzgs-5-1}), respectively, for $j=1,\ldots,r_1.$ For the matrix $\tilde{U}_j=\bigl[\begin{smallmatrix}
\tilde{U}_{j1} & \tilde{U}_{j2}\\
\tilde{U}_{j2} & -\tilde{U}_{j1}\\
\end{smallmatrix}\bigr]$ given in (\ref{xzgs-5-1}), we can see from Lemma \ref{lem-8} that there exist two $2n_j\times 2n_j$ orthogonal-symplecitc matrices $Q_{j1}, Q_{j2}$ such that 
$$Q_{j1}^T\begin{bmatrix}
\tilde{U}_{j1} & \tilde{U}_{j2}\\
\tilde{U}_{j2} & -\tilde{U}_{j1}\\
\end{bmatrix}Q_{j2}=\begin{bmatrix}
D_j & 0\\
0 & -D_j\\
\end{bmatrix},$$
where $D_j=\mbox{diag}(a_1^{(j)},\ldots,a^{(j)}_{n_j})$, $a^{(j)}_k>0$, $k=1,\ldots,n_j$. Let $Q_j=\bigl[\begin{smallmatrix}
Q_{j1} & 0\\
0 & Q_{j2}\\
\end{smallmatrix}\bigr]$, $D_{jj}=\mbox{diag}(D_j, D_j, D_j, D_j)^{1/2}$ for $j=1,2,\ldots, r_1$. Then
$$\tilde{\Gamma}_{jj}:=D_{jj}^{-1}Q_j^T\Gamma_{jj}Q_jD_{jj}^{-1}=\begin{bmatrix}
0 & \mathfrak{I}_{2n_j}\\
\mathfrak{I}_{2n_j} & 0\\
\end{bmatrix},$$
where $\mathfrak{I}_{2n_j}=\mbox{diag}(I_{n_j}, -I_{n_j})$, and furthermore
$$\tilde{X}_j=X_jQ_jD_{jj},\ \ \tilde{X}_jJ_j=X_jQ_jD_{jj}J_j=X_jQ_jJ_jD_{jj}=X_jJ_jQ_jD_{jj}.$$

Case 2. $\lambda_j={\rm i}\beta_j$ with $\beta_j>0$ and $J_j$ and $\Gamma_{jj}$ are given by (\ref{szgs-6-2}) and (\ref{xzgs-5-2}), respectively, for $j=r_1+1,\ldots,r_2.$ Note that $\tilde{W}_{j1}^T=\tilde{W}_{j1}$ and $\tilde{W}_{j2}^T=-\tilde{W}_{j2}$, it follows that the $2n_j\times 2n_j$ matrix $\Gamma_{jj}=\bigl[\begin{smallmatrix}
\tilde{W}_{j1} & \tilde{W}_{j2}\\
-\tilde{W}_{j2} & \tilde{W}_{j1}\\
\end{smallmatrix}\bigr]$ is symmetric skew-Hamiltonian. By Lemma \ref{lem-7}, there exists an orthogonal-symplectic matrix $Q_j$ such that 
$$Q_j^T\Gamma_{jj}Q_j=\begin{bmatrix}
D_j & 0\\
0 & D_j\\
\end{bmatrix},$$
where $D_j=\mbox{diag}(b^{(j)}_1,\ldots, b^{(j)}_{n_j})$, $b^{(j)}_k\in\mathbb{R}$ for $k=1,\ldots, n_j$. Define $D_{jj}=\mbox{diag}(|D_j|, |D_j|)^{1/2}$. It is easy to verify that $J_jQ_j=Q_jJ_j$, and 
$$\tilde{\Gamma}_{jj}:=D_{jj}^{-1}Q_j^T\Gamma_{jj}Q_jD_{jj}^{-1}=\begin{bmatrix}
E_{n_j} & 0\\
0 & E_{n_j}\\
\end{bmatrix},$$
where $E_{n_j}=\mbox{diag}(\pm 1,\pm 1,\ldots,\pm 1)$ is of order $n_j\times n_j$. And, 
$$\tilde{X}:=X_jQ_jD_{jj},\ \ \tilde{X}_jJ_j=X_jQ_jD_{jj}J_j=X_jQ_jJ_jD_{jj}=X_jJ_jQ_jD_{jj}.$$

Case 3. $\lambda_j=\alpha_j$ with $\alpha_j>0$, and $J_j$ and $\Gamma_{jj}$ are given by (\ref{szgs-6-3}) and (\ref{xzgs-5-3}), respectively, for $j=r_2+1,\ldots,s-1.$ Let the SVD of $\tilde{V}_{j2}$ be 
$$\tilde{V}_{j2}=Q_{j1}D_jQ_{j2}^T,\ \ D_j=\mbox{diag}(c^{(j)}_1,\ldots,c^{(j)}_{n_j}), c^{(j)}_k>0, k=1,\ldots,n_j,$$
were $Q_{j1}, Q_{j2}\in\mathbb{R}^{n_j\times n_j}$ are two orthogonal matrices. Let $Q_j=\mbox{diag}(Q_{j1}, Q_{j2}))$ and define $D_{jj}=\mbox{diag}(D_j, D_j)^{1/2}$. Then we have 
$$\tilde{\Gamma}_{jj}:=D_{jj}^{-1}Q_j^T\Gamma_{jj}Q_jD_{jj}^{-1}=\begin{bmatrix}
0 & I_{n_j}\\
I_{n_j}& 0\\
\end{bmatrix},$$
and 
$$ \tilde{X}_j:=X_jQ_jD_{jj},\ \ \tilde{X}_jJ_j=X_jQ_jD_{jj}J_j=X_jQ_jJ_jD_{jj}=X_jJ_jQ_jD_{jj}.$$

Case 4. $\lambda_s=0$, $J_s=0$ and $\Gamma_{ss}$ is a real symmetric matrix of order $n_s\times n_s$. Clearly, there exists an orthogonal matrix $Q_s\in\mathbb{R}^{n_s\times n_s}$ such that 
$$Q_s^T\Gamma_{ss}Q_s=D_s,$$
where $D_s=\mbox{diag}(d_1,\ldots,d_{n_s})$, $d_k\in\mathbb{R}$, $k=1,\ldots,n_s$. Define $D_{ss}=|D_{s}|^{1/2}$. Then
$$\tilde{\Gamma}_{ss}:=D_{ss}^{-1}Q_s^T\Gamma_{ss}Q_sD_{ss}^{-1}=E_s,$$
where $E_s=\mbox{diag}(\pm 1,\ldots,\pm 1)$ is of order $n_s\times n_s$, and 
$$\tilde{X}_s:=X_sQ_sD_{ss},\ \ \tilde{X}_sJ_s=X_sQ_sD_{ss}J_s=X_sQ_sJ_sD_{ss}=X_sJ_sQ_sD_{ss}.$$

Now, let 
$$\tilde{X}=[\tilde{X}_1,\tilde{X}_2,\ldots,\tilde{X}_s], \ \ \tilde{\Gamma}:=\mbox{diag}(\tilde{\Gamma}_{11},\tilde{\Gamma}_{22}\ldots,,\tilde{\Gamma}_{ss}).$$
$$Q=\mbox{diag}(Q_1,Q_2,\ldots,Q_s),\ \ D=\mbox{diag}(D_{11},D_{22},\ldots,D_{ss}).$$
Observe that the matrix 
$$\tilde{\Gamma}=D^{-1}Q^T\Gamma QD^{-1}=\left(DQ^TX_L^T\mathcal{A}X_LQD\right)^{-1}=(\tilde{X}_L^T\mathcal{A}\tilde{X}_L)^{-1},$$
has the structures specified in (\ref{xxg-1-odd}), which means that $\tilde{\Gamma}$ is indeed the matrix corresponding to the standard pair $(\tilde{X}, J)$, where $\tilde{X}=XQD$ and $\tilde{X}_L^T=[\tilde{X}^T, J^T\tilde{X}^T, \ldots, (J^{k-1})^T\tilde{X}^T]$.
\end{proof}

\begin{theorem}\label{them-h}
Suppose that all eigenvalues of the $(H,\epsilon_1, 1)$-structured matrix polynomial $P(\lambda)$ are semi-simple with a standard pair $(X,\ J)$ given by (\ref{gss-8-1}) and (\ref{gss-8-2}) . Then there exists a  standard pair 
$(\tilde{X}, J)$ such that the corresponding matrix $\tilde{\Gamma}\in\mathcal{S}_{(J,H,\epsilon_1,1)}$ has the structure
\begin{equation}\label{xxxg-1}
\tilde{\Gamma}=\mbox{diag}\left(\begin{bmatrix}
0 & I_{n_1}\\
I_{n_1} & 0\\
\end{bmatrix},\cdots,\begin{bmatrix}
0 & I_{n_r}\\
I_{n_r} & 0\\
\end{bmatrix}, E_{n_{r+1}},\cdots,E_{n_s}\right),
\end{equation} 
where  $E_{n_j}=\mbox{diag}(\pm1,\pm1,\ldots,\pm1)$ is of order $n_j\times n_j$, $j=r+1,\ldots,s.$
\end{theorem}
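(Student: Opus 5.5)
We follow the pattern of the proofs of Theorem \ref{thm-T-s} and Theorem \ref{thm-T-o}. By Lemma \ref{lem-3} and part (3) or (4) of Theorem \ref{thm-4} (here $\star=H$, $\epsilon_2=1$), $\Gamma=\mbox{diag}(\Gamma_{11},\ldots,\Gamma_{ss})$. For $j\le r$ we have $\Gamma_{jj}=\bigl[\begin{smallmatrix}0 & W_j\\ \rho W_j^H & 0\end{smallmatrix}\bigr]$. Since $\Gamma^H=\epsilon_2\Gamma=\Gamma$, we get $\rho=1$. For $j>r$, $\Gamma_{jj}=W_j$ is Hermitian. Semi-simplicity gives $N_j=0$, so every $J_j$ is a scalar matrix on each diagonal block: $J_j=\mbox{diag}(\lambda_jI_{n_j},\mu_jI_{n_j})$ with $\mu_j=\bar\lambda_j$ or $-\bar\lambda_j$ for $j\le r$, and $J_j=\lambda_jI_{n_j}$ for $j>r$. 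Hence any block-diagonal transformation whose blocks conform to the eigenvalue blocks commutes with $J$. This is the key simplification: the Hankel structure disappears, and we may use arbitrary nonsingular congruences blockwise, not just orthogonal ones.

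For $j\le r$, $W_j$ is nonsingular because $\Gamma$ is nonsingular. Set $Z_j=\mbox{diag}(W_j^{-1},I_{n_j})$, or more symmetrically use the SVD $W_j=U_j\Sigma_jV_j^H$ and take $Z_j=\mbox{diag}(U_j\Sigma_j^{-1/2},V_j\Sigma_j^{-1/2})$. Then
\[
Z_j^{-1}\Gamma_{jj}Z_j^{-H}
\]
should equal $\bigl[\begin{smallmatrix}0&I\\ I&0\end{smallmatrix}\bigr]$. I will check this directly: with $Z_j=\mbox{diag}(A,B)$, $Z_j^{-1}\Gamma_{jj}Z_j^{-H}$ has off-diagonal block $A^{-1}W_jB^{-H}$. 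So I need $A^{-1}W_jB^{-H}=I$, which the choice $A=U_j\Sigma_j^{1/2}$, $B=V_j\Sigma_j^{1/2}$ achieves. The lower block is then automatically the conjugate transpose. For $j>r$, diagonalize the Hermitian $W_j=U_jD_jU_j^H$ and take $Z_j=U_j|D_j|^{1/2}$. This gives $Z_j^{-1}W_jZ_j^{-H}=E_{n_j}$, and the signs are determined by Sylvester inertia.

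Let $Z=\mbox{diag}(Z_1,\ldots,Z_s)$, which commutes with $J$ since each $Z_j$ is block diagonal in the eigenvalue blocks. Define $\tilde X=XZ$. Then $\tilde X\tilde J^{i}=XJ^iZ$, so $\tilde X_L=X_LZ$ is nonsingular and (\ref{gs-2}) still holds. Thus $(\tilde X,J)$ is a standard pair. Its parameter matrix is
\[
\tilde\Gamma=(Z^HX_L^H\mathcal{A}X_LZ)^{-1}=Z^{-1}\Gamma Z^{-H},
\]
which has the form (\ref{xxxg-1}).

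The main obstacle is bookkeeping rather than depth. I must confirm that the transformation enters as $Z^{-1}\Gamma Z^{-H}$ and not $Z^{H}\Gamma Z$, which only affects whether we use $\Sigma^{1/2}$ or $\Sigma^{-1/2}$. I also need to verify that $\rho=1$ under $\epsilon_2=1$ in both subcases $\epsilon_1=\pm1$, and that $Z$ genuinely commutes with $J$ in the case $\mu_j=-\bar\lambda_j$. Both hold because $J_j$ is scalar on each half.
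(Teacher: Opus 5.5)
Your proposal is correct and follows essentially the same route as the paper: $\Gamma$ is block diagonal by Theorem~\ref{thm-4}, and $N_j=0$ lets a blockwise $Z$ commute with $J$. You use an SVD of $W_j$ for the paired blocks and a unitary diagonalization with $|D_j|^{1/2}$ scaling for the Hermitian blocks, and $\tilde X=XZ$ gives $\tilde\Gamma=Z^{-1}\Gamma Z^{-H}$. One correction: the choices you mention first, $\mbox{diag}(W_j^{-1},I_{n_j})$ and the $\Sigma_j^{-1/2}$ scaling, are wrong under this rule; use $\mbox{diag}(W_j,I_{n_j})$ or $\mbox{diag}(U_j\Sigma_j^{1/2},V_j\Sigma_j^{1/2})$, as you then verify. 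With that choice your bookkeeping is actually cleaner than the paper's, which writes $P_j=\mbox{diag}(\tilde U_j,\tilde V_j)$ where $\mbox{diag}(\tilde U_j^H,\tilde V_j^H)$ is needed.
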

\begin{proof}
We only give the proof of $H$-Hermitian (i.e., $\epsilon_1=1$), and the case of $H$-odd (i.e., $\epsilon_1=-1$) can be proved similarly.  Since $(X,\ J)$ is a standard pair of $P(\lambda)$, it follows from Theorem \ref{thm-4} that $P(\lambda)$ has a spectral decomposition as in (\ref{gs-7}), in which the matrix $\Gamma\in\mathcal{S}_{(J, H, 1, 1)}$ has the form 
\begin{equation}\label{xxg-9}
\Gamma=\mbox{diag}\left(\begin{bmatrix}
0 & W_1\\
W_1^H & 0\\
\end{bmatrix},\cdots, \begin{bmatrix}
0 & W_r\\
W_r^H & 0\\
\end{bmatrix}, W_{r+1},\ldots, W_s\right),
\end{equation}
where $W_j\in\mathbb{C}^{n_j\times n_j}$, $j=1,\ldots, s$, and $W_j^H=W_j$ for $j=r+1,\ldots,s$. Now, we give the structure of $W_j$ according to $J_j$.

{\rm (1)} $\lambda_j\in\mathbb{C}/\mathbb{R}$, $j=1,\ldots, r$. Let the SVD of $W_j$ be  
$$W_j=\tilde{U}_jD_j \tilde{V}_j^H, D_j=\mbox{diag}(\eta^{(j)}_1,\ldots,\eta^{(j)}_{n_j}), \eta^{(j)}_k>0, k=1,\ldots,n_j,$$
where $\tilde{U}_j, \tilde{V}_j\in\mathbb{C}^{n_j\times n_j}$ are unitary matrices. Let $P_j=\mbox{diag}(\tilde{U}, \tilde{V})$ and $D_{jj}=\mbox{diag}(D_j,D_j)^{1/2}$. Then
\begin{equation}\label{xxg-10}
\tilde{\Gamma}_{jj}:=D_{jj}^{-1}P_j\begin{bmatrix}
0 & W_j\\
W_j^H & 0\\
\end{bmatrix}P_j^HD_{jj}^{-1}=\begin{bmatrix}
0 & I_{n_1}\\
I_{n_1} & 0\\
\end{bmatrix},
\end{equation}
and, furthermore,
$$\tilde{X}_j=X_jP_j^HD_{jj}, \ \ \tilde{X}_jJ_j=X_jP_j^HD_{jj}J_j=X_jP_j^HJ_jD_{jj}=X_jJ_jP_j^HD_{jj},$$
since $N_j=0$ by assumption. 

{\rm (2)} $\lambda_j\in\mathbb{R}$, $j=r+1,\ldots, s$. Since $W_j^H=W_j$ is nonsingular, there exists an unitary matrix $P_j\in\mathbb{C}^{n_j\times n_j}$ such that $P_jW_jP_j^H=D_j$, where $D_j=\mbox{diag}(\xi^{(j)}_1,\ldots,\xi^{(j)}_{n_j})$ is nonsingular with $\xi^{(j)}_k\in\mathbb{R}$, $k=1,\ldots,n_j$. Let $D_{jj}=|D_j|^{1/2}$. Then, we have 
\begin{equation}\label{xxg-11}
\tilde{\Gamma}_{jj}=D_{jj}^{-1}P_jW_jP_j^HD_{jj}^{-1}=E_j,
\end{equation}
where $E_j=\mbox{diag}(\pm1,\ldots,\pm1)$ is of order $n_j\times n_j$, $j=r+1,\ldots, s$, and 
$$\tilde{X}_j=X_jP_j^HD_{jj}, \ \ \tilde{X}_jJ_j=X_jP_j^HD_{jj}J_j=X_jP_j^HJ_jD_{jj}=X_jJ_jP_j^HD_{jj}.$$

Now define 
$$P=\mbox{diag}(P_1,P_2,\ldots,P_s), D=\mbox{diag}(D_{11},D_{22},\ldots,D_{ss}).$$
Observe that the matrix
$$\tilde{\Gamma}:=D^{-1}P\Gamma P^HD^{-1}=\mbox{diag}(\tilde{\Gamma}_{11},\tilde{\Gamma}_{22},\ldots,\tilde{\Gamma}_{ss})$$
has the structure specified in (\ref{xxxg-1}), and we have 
$$\tilde{X}=XP^HD, \ \ \tilde{X}J=XP^HDJ=XP^HJD=XJP^HD,$$
therefore, $\tilde{\Gamma}$ is indeed the matrix corresponding to the standard pair $(\tilde{X}, J)$. 
\end{proof}

Similar to the proof of Theorem \ref{them-h}, we can prove the following result. 

\begin{theorem}\label{thm-H-s}
Suppose that all eigenvalues of the $(H,\epsilon_1, -1)$-structured matrix polynomial $P(\lambda)$ are semi-simple with a standard pair $(X, J)$ given by (\ref{gss-8-1}) and (\ref{gss-8-2}) . Then there exists a  standard pair 
$(\tilde{X}, J)$ such that the corresponding matrix $\tilde{\Gamma}\in\mathcal{S}_{(J,H,\epsilon_1,-1)}$ has the structure
\begin{equation}\label{xxxg-1}
\tilde{\Gamma}=\mbox{diag}\left(\begin{bmatrix}
0 & I_{n_1}\\
-I_{n_1} & 0\\
\end{bmatrix},\cdots,\begin{bmatrix}
0 & I_{n_r}\\
-I_{n_r} & 0\\
\end{bmatrix}, E_{n_{r+1}},\cdots,E_{n_s}\right),
\end{equation} 
where  $E_{n_j}=\mbox{diag}(\pm{\rm i},\pm{\rm i},\ldots,\pm{\rm i})$ is of order $n_j\times n_j$, $j=r+1,\ldots,s.$
\end{theorem}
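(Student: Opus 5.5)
I will mirror the proof of Theorem \ref{them-h} and handle the $H$-skew-Hermitian case $(H,-1,-1)$ in detail; the $H$-even case $(H,1,-1)$ is analogous. By Theorem \ref{thm-4}(3) with $\rho=-1$ (resp.\ Theorem \ref{thm-4}(4) with $\rho=-1$), and since all eigenvalues are semi-simple so that $N_j=0$, the matrix $\Gamma\in\mathcal{S}_{(J,H,\epsilon_1,-1)}$ is block diagonal:
$$\Gamma=\mbox{diag}\left(\begin{bmatrix} 0 & W_1\\ -W_1^H & 0\end{bmatrix},\ldots,\begin{bmatrix} 0 & W_r\\ -W_r^H & 0\end{bmatrix}, W_{r+1},\ldots,W_s\right).$$
Here $W_j^H=-W_j$ for $j=r+1,\ldots,s$. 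All $W_j$ are nonsingular because $\Gamma$ is. The Hankel structure is irrelevant because $J_j$ is scalar on each diagonal sub-block.

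For $j\le r$, take the SVD $W_j=\tilde U_jD_j\tilde V_j^H$ with $D_j>0$ diagonal. Set $P_j=\mbox{diag}(\tilde U_j^H,\tilde V_j^H)$ and $D_{jj}=\mbox{diag}(D_j,D_j)^{1/2}$. Then
$$D_{jj}^{-1}P_j\begin{bmatrix}0&W_j\\-W_j^H&0\end{bmatrix}P_j^HD_{jj}^{-1}=\begin{bmatrix}0&I\\-I&0\end{bmatrix}.$$
Since $J_j=\mbox{diag}(\lambda_jI,\bar\lambda_jI)$ (resp.\ $-\bar\lambda_j$), it is block-scalar and therefore commutes with both $P_j$ and $D_{jj}$.

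For $j>r$, the matrix $W_j$ is skew-Hermitian, so ${\rm i}W_j$ is Hermitian. A unitary $P_j$ therefore gives $P_jW_jP_j^H={\rm i}\,\mbox{diag}(\xi_1,\ldots,\xi_{n_j})$ with real nonzero $\xi_k$. Taking $D_{jj}=|\mbox{diag}(\xi_k)|^{1/2}$ yields $D_{jj}^{-1}P_jW_jP_j^HD_{jj}^{-1}=\mbox{diag}(\pm{\rm i})=E_{n_j}$. Here $J_j=\lambda_jI$ is scalar, so it again commutes with $P_j$ and $D_{jj}$.

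Put $P=\mbox{diag}(P_j)$, $D=\mbox{diag}(D_{jj})$, $\tilde X=XP^HD^{-1}$ and $\tilde\Gamma=D^{-1}P\Gamma P^HD^{-1}$. The step to check carefully is that $(\tilde X,J)$ is still a standard pair with $\tilde\Gamma$ as its parameter matrix.

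Let $M=P^HD^{-1}$. Because $M$ commutes with $J$, we have $\tilde XJ^i=XJ^iM$, so $\tilde X_L=X_LM$. Hence $\tilde X_L$ is nonsingular and (\ref{gs-2}) still holds. From (\ref{gs-4}),
$$(\tilde X_L^H\mathcal{A}\tilde X_L)^{-1}=M^{-1}\Gamma M^{-H}=DP\Gamma P^HD.$$
This is not $\tilde\Gamma$, so the scaling must be inverted: I will instead take $\tilde X=XP^HD$. Then $\tilde X_L=X_LP^HD$, and
$$(\tilde X_L^H\mathcal{A}\tilde X_L)^{-1}=D^{-1}P\Gamma P^HD^{-1}=\tilde\Gamma,$$
which is exactly the structure claimed.

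Finally, $\tilde\Gamma\in\mathcal{S}_{(J,H,\epsilon_1,-1)}$ follows from the necessity part of Theorem \ref{thm-1}; it can also be checked directly, since $E_{n_j}^H=-E_{n_j}$ and the $2\times2$ block form is skew-Hermitian. The only genuine care points are getting the scaling direction right and noting that $D_{jj}$, $P_j$ commute with $J_j$ because $N_j=0$.
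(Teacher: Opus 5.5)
Your proposal is correct and takes essentially the paper's route. The paper's proof is only the remark that the result follows as in Theorem \ref{them-h}: use the block form from Theorem \ref{thm-4} with $\rho=-1$, normalize each block by an SVD ($j\le r$) or a unitary diagonalization of the skew-Hermitian $W_j$ ($j>r$) plus diagonal scaling, set $\tilde X=XP^HD$, and use that $P$ and $D$ commute with the block-scalar $J$. Your choice $P_j=\mbox{diag}(\tilde U_j^H,\tilde V_j^H)$ and your final scaling $\tilde X=XP^HD$ are the right ones; in Theorem \ref{them-h} the paper writes $P_j=\mbox{diag}(\tilde U_j,\tilde V_j)$, which needs conjugate transposes, and you should drop the abandoned $\tilde X=XP^HD^{-1}$ attempt from your write-up.
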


\section{Conclusions}

In this paper, we have derived the spectral decomposition of the $(\star, \epsilon_1, \epsilon_2)$-structured matrix polynomials $P(\lambda)$ in the unified form, in which the parameter matrix $\Gamma$ plays an important role. 
With a standard pair $(X,\ J)$ of $P(\lambda)$, the parameter matrix $\Gamma$ possesses a block diagonal structure and $\star$-symmetric upper triangular Hankel/skew-Hankel blocks forms as in Theorem \ref{thm-4}. Using the recursive relationship between the coefficient matrices of $P(\lambda)$, we have provided equivalent expressions of these coefficient matrices. In the special case that all the eigenvalues of $P(\lambda)$ are semi-simple, the matrix $\Gamma\in\mathcal{S}_{(J,\star,\epsilon_1,\epsilon_2)}$ has some special forms 
as shown in Theorem \ref{thm-T-s}, Theorem \ref{thm-T-o}, Theorem \ref{them-h} and Theorem \ref{thm-H-s}.






\end{document}